\newcommand{\globalcolor}[1]{%
  \color{#1}\global\let\default@color\current@color
}
\newtheorem{theorem}{Theorem}
\newtheorem{lemma}[theorem]{Lemma}
\newtheorem{prop}[theorem]{Proposition}
\newtheorem{cor}[theorem]{Corollary}
\newtheorem{question}[theorem]{Question}
\theoremstyle{definition}
\newtheorem{definition}[theorem]{Definition}
\newtheorem{claim}[theorem]{Claim}
\newtheorem{fact}[theorem]{Fact}
\newtheorem{example}[theorem]{Example}
\newcommand{\rank}{\operatorname{rank}}
\newcommand{\krank}{\operatorname{k-rank}}
\newcommand{\abs}[1]{\lvert #1 \rvert}
\newcommand{\bigabs}[1]{\bigl\lvert #1 \bigr\rvert}
\newcommand{\ceil}[1]{\lceil #1 \rceil}
\newcommand{\bigceil}[1]{\bigl\lceil #1 \bigr\rceil}
\newcommand{\Bigceil}[1]{\Bigl\lceil #1 \Bigr\rceil}
\newcommand{\biggceil}[1]{\biggl\lceil #1 \biggr\rceil}
\newcommand{\floor}[1]{\lfloor #1 \rfloor}
\newcommand{\I}{\mathds{1}}
\newcommand{\setft}[1]{\mathrm{#1}}
\newcommand{\Lin}{\setft{L}}
\newcommand{\pro}[1]{\setft{Prod}\left(#1 \right)}
\newcommand{\complex}{\mathbb{C}}
\newcommand{\field}{\mathbb{F}}
\newcommand{\real}{\mathbb{R}}
\newenvironment{namedtheorem}[1]
	       {\begin{trivlist}\item {\bf #1.}\em}{\end{trivlist}}
\newcommand\W{\mathcal{W}}
\newcommand\V{\mathcal{V}}
\newcommand\C{\mathcal{C}}
\newcommand\R{\mathcal{R}}
\DeclareMathOperator{\spn}{span}
\newcommand{\eql}[2]{\begin{align}\label{#1}#2\end{align}}
\newcommand{\eq}[2]{
\ifthenelse{\equal{#1}{}}{\begin{align}#2\end{align}}{\eql{#1}{#2}}}
\newcommand{\ha}[2][]{
\ifthenelse{\equal{#1}{}}{#2}{#1, #2}}
\newcommand{\Char}{\setft{Char}}
\begin{document}
\color{black}
\emergencystretch 3em
\title{\bf A generalization of Kruskal's theorem on tensor decomposition}

\author[$\dagger$]{
  Benjamin Lovitz\thanks{emails: benjamin.lovitz@gmail.com, f.v.petrov@spbu.ru}} 
 \author[$* \S \ddagger$]{Fedor Petrov}
\affil[$\dagger$]{Institute for Quantum Computing and Department of Applied Mathematics, 

University of Waterloo, Canada}
\affil[$\ddagger$]{St. Petersburg State University, St. Petersburg, Russia.}
\affil[$\S$]{St. Petersburg Department of Steklov Mathematical Institute of Russian Academy of Sciences,
St. Petersburg, Russia.}

\maketitle
\begin{abstract}
Kruskal's theorem states that a sum of product tensors constitutes a unique tensor rank decomposition if the so-called \textit{k-ranks} of the product tensors are large. We prove a ``splitting theorem'' for sets of product tensors, in which the k-rank condition of Kruskal's theorem is weakened to the standard notion of rank, and the conclusion of uniqueness is relaxed to the statement that the set of product tensors splits (i.e. is disconnected as a matroid). Our splitting theorem implies a generalization of Kruskal's theorem. While several extensions of Kruskal's theorem are already present in the literature, all of these use Kruskal's original permutation lemma, and hence still cannot certify uniqueness when the k-ranks are below a certain threshold. Our generalization uses a completely new proof technique, contains many of these extensions, and can certify uniqueness below this threshold. We obtain several other useful results on tensor decompositions as consequences of our splitting theorem. We prove sharp lower bounds on tensor rank and Waring rank, which extend Sylvester's matrix rank inequality to tensors. We also prove novel uniqueness results for non-rank tensor decompositions.
\end{abstract}

\newpage
\tableofcontents
\newpage
\section{Introduction}\label{intro}


Let $[m]=\{1,\dots, m\}$ when $m$ is a positive integer, and let $[0]=\{\}$ be the empty set. For vector spaces $\V_1,\dots, \V_m$ over a field $\field$, a \textit{product tensor} in $\V=\V_1\otimes\dots \otimes \V_m$ is a non-zero tensor $z \in \V$ of the form ${z=z_1\otimes \dots \otimes z_m}$, with $z_j \in \V_j$ for all $j \in [m]$. We refer to the spaces $\V_j$ that make up the space $\V$ as \textit{subsystems}.
The \textit{tensor rank} (or \textit{rank}) of a tensor $v \in \V$, denoted by $\rank(v)$, is the minimum number $n$ for which $v$ is the sum of $n$ product tensors. A decomposition of $v$ into a sum of $\rank(v)$ product tensors is called a \textit{tensor rank decomposition} of $v$.
An expression of $v$ as a sum of product tensors (not necessarily of minimum number) is known simply as a \textit{decomposition} of $v$. A decomposition of $v$
\begin{align}\label{decomposition_of_v}
v=\sum_{a \in [n]} x_a
\end{align}
into a sum of product tensors $\{x_a : a \in [n]\}$ is said to be the \textit{unique tensor rank decomposition} of $v$ if for any decomposition
\begin{align}\label{other_decomp}
v=\sum_{a \in [r]} y_a
\end{align}
of $v$ into the sum of $r \leq n$ product tensors $\{y_a : a \in [r]\}$, it holds that $r=n$ and ${\{x_a : a \in [n]\}}={\{y_a : a \in [n]\}}$ as multisets. The decomposition~\eqref{decomposition_of_v} is said to be \textit{unique in the $j$-th subsystem} if for any other decomposition~\eqref{other_decomp}, it holds that $r=n$ and there exists a permutation $\sigma \in S_n$ such that $x_{a,j} \in \spn \{y_{\sigma(a),j}\}$ for all $a \in [n]$. Kruskal's theorem gives sufficient conditions for a given decomposition to constitute a unique tensor rank decomposition~\cite{kruskal1977three}. We refer to results of this kind as \textit{uniqueness criteria}.

Uniqueness criteria have found scientific applications in signal processing and spectroscopy, among others
\cite{
art2, landsberg2012tensors, cichocki2015tensor,sidiropoulos2017tensor}. In these circles, subsystems are also referred to as \textit{factors} and \textit{loadings}, and the tensor rank decomposition is also referred to as the \textit{canonical decomposition (CANDECOMP), parallel factor (PARAFAC) model, canonical polyadic (CP) decomposition,} and \textit{topographic components model}. Uniqueness of a tensor decomposition is also referred to as \textit{specific identifiability}, and uniqueness criteria as \textit{identifiability criteria}.

\subsection{Kruskal's theorem, and a generalization}
For a finite set $S$, let $\abs{S}$ be the size of $S$. The \textit{Kruskal-rank} (or \textit{k-rank}) of a multiset of vectors $\{u_1,\dots, u_n\}$, denoted by ${\krank(u_1,\dots, u_n)}$, is the largest number $k$ for which $\dim\spn\{ u_a : a \in S\} =k$ for every subset $S \subseteq [n]$ of size $\abs{S}=k$. Similarly, we call $\dim\spn\{ u_a : a \in [n]\}$ the \textit{standard rank} (or \textit{rank}) of $\{u_1,\dots, u_n\}$.
Kruskal's theorem states that if a collection of product tensors $\{x_{a,1}\otimes \dots \otimes x_{a,m} : a \in [n]\}$ has large enough k-ranks $k_j=\krank(x_{1,j},\dots, x_{n,j})$, then their sum constitutes a unique tensor rank  decomposition. This theorem was originally proven for $m=3$ subsystems over $\real$ \cite{kruskal1977three}, was later extended to more than three subsystems by Sidiropoulos and Bro \cite{sidiropoulos2000uniqueness}, and then extended to an arbitrary field by Rhodes \cite{RHODES20101818}.
\begin{theorem}[Kruskal's theorem]\label{kruskal}
Let $n \geq 2$ and $m \geq 3$ be integers, let ${\V=\V_1\otimes \cdots \otimes \V_m}$ be a vector space over a field $\field$, and let
\begin{align}
\{{x_{a,1}}\otimes\dots\otimes x_{a,m}: a \in [n] \}\subseteq \V\setminus\{0\}
\end{align}
be a multiset of product tensors. For each $a \in [n]$, let $x_a={x_{a,1}}\otimes\dots\otimes x_{a,m}.$ For each $j \in [m]$, let
\begin{align}
k_j = \krank(x_{1,j},\dots, x_{n,j}).
\end{align}
If ${2n \leq\sum_{j=1}^m (k_j-1)+1},$ then $\sum_{a \in [n]} x_{a}$ constitutes a unique tensor rank decomposition.
\end{theorem}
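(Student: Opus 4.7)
The plan is to reduce the general $m$-factor statement to the $m=3$ case and then apply Kruskal's permutation lemma, following the outline of the classical proof (Kruskal for $m=3$, Sidiropoulos--Bro for the reduction).

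First I would handle the reduction $m \to 3$. Given a partition $[m] = I_1 \sqcup I_2 \sqcup I_3$, grouping factors within each block produces a three-factor decomposition of the same tensor $v$ in $\tilde{\V}_1 \otimes \tilde{\V}_2 \otimes \tilde{\V}_3$, with $\tilde{\V}_j = \bigotimes_{i \in I_j} \V_i$ and $\tilde{x}_{a,j} = \bigotimes_{i \in I_j} x_{a,i}$. The merged k-ranks satisfy
\begin{align}
\krank(\tilde{x}_{1,j}, \ldots, \tilde{x}_{n,j}) \geq \min\Bigl(n,\, 1 + \sum_{i \in I_j}(k_i - 1)\Bigr),
\end{align}
because for two length-$n$ sequences $(u_a)$, $(v_a)$ with k-ranks $k$ and $k'$ the entrywise tensor sequence $(u_a \otimes v_a)$ has k-rank at least $\min(n, k+k'-1)$, and one iterates. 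Choosing the partition greedily so that the three sums are balanced ensures the three-factor Kruskal bound $2n \leq (\tilde k_1 - 1) + (\tilde k_2 - 1) + (\tilde k_3 - 1) + 1$ is met. A short additional argument, using that each $x_a$ is nonzero, transfers uniqueness in the three meta-subsystems back to uniqueness in the original $m$ subsystems.

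For the base case $m=3$, suppose we have a competing decomposition $v = \sum_{a=1}^r y_a$ with $r \leq n$. The heart of the proof is Kruskal's permutation lemma, which roughly states that if two sequences of vectors induce the same \enquote{vanishing pattern} under linear functionals (in a precise sense governed by $k$), and one of them has k-rank at least $k$, then the two sequences agree up to a permutation and nonzero scalar multiples. To verify its hypothesis in the first subsystem, I would contract $v$ against a linear functional $\varphi$ on $\V_1$: the resulting tensor in $\V_2 \otimes \V_3$ has rank bounded by the number of $a$ with $\varphi(x_{a,1}) \neq 0$, and the k-rank hypotheses on subsystems $2$ and $3$ convert such rank bounds into vanishing conditions on the $y_{a,1}$'s. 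Applying the lemma in each subsystem yields three matching permutations, and a direct argument (using that the product tensors are nonzero and that $n \geq 2$) forces them to coincide, so $\{y_a\} = \{x_a\}$ as multisets and $r = n$.

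The main obstacle is Kruskal's permutation lemma itself: its proof is a delicate combinatorial argument about circuits in the matroid built from the two sequences, and it is what ultimately enforces the numerical coefficient in the bound $2n \leq \sum (k_j - 1) + 1$. A secondary difficulty lies in the reduction step, because the k-rank superadditivity above is not always tight, and when some $k_i$ equal $1$ or $2$ the partition must be chosen with care to ensure each merged k-rank is at least $2$ and that uniqueness in the original $m$ subsystems can actually be recovered from uniqueness in the three meta-subsystems.
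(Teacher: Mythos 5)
Your proposal follows the classical route --- reduce to three factors via the k-rank superadditivity of Sidiropoulos--Bro, then invoke Kruskal's permutation lemma for the $m=3$ base case --- and that is a valid proof outline, but it is not what the paper does. The paper never touches the permutation lemma. Instead it cites Kruskal's theorem as known and deduces it from its own strictly stronger Theorem~\ref{k-gen}: under Kruskal's hypotheses, $d_j^S \geq \min\{k_j, \abs{S}\}$ for every $S$, which is easily seen to imply $2\abs{S} \leq \sum_{j=1}^m (d_j^S - 1) + 1$ for all $S$ with $2 \leq \abs{S} \leq n$, and Theorem~\ref{k-gen} then gives uniqueness. Theorem~\ref{k-gen} in turn is proven from the splitting theorem (Theorem~\ref{conjecture}), whose proof uses the ear decomposition of a connected matroid rather than any permutation-lemma-style combinatorics. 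The difference matters: the permutation lemma intrinsically works with k-ranks and cannot reach below the k-rank threshold that all prior extensions of Kruskal share, whereas the splitting-theorem route relaxes k-rank to ordinary rank and so yields a genuinely more general uniqueness criterion (the paper's Example~\ref{certify} exhibits a case Theorem~\ref{k-gen} handles but Kruskal and the DLS extensions cannot). Your approach buys you a self-contained path to the classical statement using well-documented prior lemmas; the paper's approach buys a stronger theorem, a new proof technique, and several downstream corollaries (rank lower bounds, circuit bounds, non-rank uniqueness) that do not seem accessible from the permutation lemma. One small caution on your sketch: the balanced-greedy tripartition step is more delicate than it looks when the capping at $n$ in $\min(n, 1 + \sum_{i \in I_j}(k_i-1))$ is active, so if you flesh this out you should reproduce the precise Sidiropoulos--Bro argument rather than rely on ``balance'' alone.
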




In \cite{DERKSEN2013708} it is shown that the inequality appearing in Kruskal's theorem cannot be weakened: there exist cases in which ${2n = \sum_{j=1}^m (k_j-1)+2}$ and the decomposition is not unique. While Kruskal's theorem gives sufficient conditions for uniqueness, necessary conditions are obtained in \cite{krijnen1993analysis,STRASSEN1983645,liu2001cramer}. In \cite{effective} it is shown that Kruskal's theorem is \textit{effective} over $\real$ or $\complex$ in the sense that it certifies uniqueness on a dense open subset of the smallest semialgebraic set containing the set of rank $n$ tensors. A robust form of Kruskal's theorem is proven in~\cite{bhaskara2014uniqueness}.


Our main result in this work is a ``splitting theorem," which is not itself a uniqueness criterion, but implies a criterion that generalizes Kruskal's theorem. In our splitting theorem, the k-rank condition in Kruskal's theorem is relaxed to a standard rank condition. In turn, the conclusion is also relaxed to a statement describing the linear dependence of the product tensors. Before stating our splitting theorem, we first introduce the generalization of Kruskal's theorem it implies.


\begin{theorem}[Generalization of Kruskal's theorem]\label{k-gen}
Let $n \geq 2$ and $m \geq 3$ be integers, let $\V=\V_1 \otimes \cdots \otimes \V_m$ be a vector space over a field $\field$, and let
\begin{align}
\{{x_{a,1}}\otimes\dots\otimes x_{a,m}: a \in [n] \}\subseteq \V\setminus \{0\}
\end{align}
be a multiset of product tensors. For each $a\in [n]$, let $x_a = {x_{a,1}}\otimes\dots\otimes x_{a,m}$. For each subset $S \subseteq [n]$ and index $j\in [m]$, let
\begin{align}
d_j^S=\dim\spn \{ x_{a,j}: a \in S\}.
\end{align}
If $\;2 \abs{S} \leq\sum_{j=1}^m (d_j^S-1)+1$ for every subset $S\subseteq [n]$ with $2\leq \abs{S} \leq n$, then $\sum_{a \in [n]} x_a$ constitutes a unique tensor rank decomposition.
\end{theorem}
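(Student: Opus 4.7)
The plan is to derive Theorem~\ref{k-gen} from the forthcoming splitting theorem by induction on $n$, the number of product tensors. In the base case $n=2$, the hypothesis applied to $S=[2]$ (with each $d_j^{[2]}\in\{1,2\}$) forces at least three subsystems in which $x_{1,j}$ and $x_{2,j}$ are linearly independent, and in this situation uniqueness of $x_1+x_2$ is classical and is in fact a degenerate special case of Kruskal's theorem itself. So the real work is the inductive step.

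For the inductive step, I would suppose $\sum_{a \in [n]}x_a = \sum_{a \in [r]}y_a$ for some competing decomposition with $r \leq n$ and all $y_a$ product tensors. The key move is to form the combined multiset
\begin{align}
T=\{x_1,\dots,x_n,\,-y_1,\dots,-y_r\},
\end{align}
which supports the linear dependence $\sum_{a\in[n]}x_a+\sum_{a\in[r]}(-y_a)=0$. Before invoking the splitting theorem one must transfer the dimension hypothesis of Theorem~\ref{k-gen} from $\{x_a\}$ to $T$: for every subset $T'\subseteq T$ with $|T'|\geq 2$, one needs $2|T'|\leq\sum_{j=1}^m (\dim\spn\{z_j:z\in T'\}-1)+1$. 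This rests on the standard flattening fact that, for each $j$, the image of the $j$-flattening of $v$ is contained in $\spn\{x_{a,j}:a\in[n]\}$, so that every $y_{b,j}$ lies in this span; subset-dimensions in $T$ are therefore controlled by dimensions of subsets of the original $x_{a,j}$'s, making the inherited hypothesis routine to verify.

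With the hypothesis in force on $T$, the splitting theorem then partitions $T=T_1\sqcup T_2$ into two nonempty parts whose spans intersect trivially (the matroid-theoretic disconnection referred to in the abstract). The dependence $\sum_{z\in T}z=0$ restricts to a nontrivial dependence on each $T_i$, giving smaller equalities $\sum_{a\in I_i}x_a=\sum_{b\in J_i}y_b$ that together cover all indices on both sides. Each such pair $(I_i,J_i)$ again satisfies the subset-wise dimension hypothesis of Theorem~\ref{k-gen}, so the inductive hypothesis forces $|I_i|=|J_i|$ and the corresponding $x$- and $y$-multisets to coincide. Reassembling the two halves yields the desired uniqueness on all of $[n]$.

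The step I expect to be the main obstacle is not the induction mechanism itself but the transfer of the hypothesis to $T$ and the argument that each part $T_i$ must contain both $x$'s and $y$'s; the latter is necessary because a part consisting purely of $x$'s (or purely of $y$'s) would give a nontrivial linear dependence among a set of product tensors whose subset-dimensions satisfy the hypothesis, which one has to rule out separately (this in fact is a direct consequence of the splitting theorem applied to that sub-multiset, so it dovetails with the same machinery). A secondary technical point is ensuring the base case also disposes of the $r=1$ possibility, which is handled by comparing factors in any subsystem where $x_{1,j}$ and $x_{2,j}$ are independent.
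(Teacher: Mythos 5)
Your opening move is exactly the paper's: form the combined multiset $T=\{x_1,\dots,x_n,-y_1,\dots,-y_r\}$ supporting $\sum_{z\in T}z=0$, and apply the splitting theorem to it. But the route you then take has genuine gaps, and in particular the ``transfer'' step you declare routine is in fact false.

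You claim that because the image of the $j$-flattening of $v$ is contained in $\spn\{x_{a,j}:a\in[n]\}$, each $y_{b,j}$ lies in that span and hence the subset-wise inequality $2|T'|\leq\sum_j(\dim\spn\{z_j:z\in T'\}-1)+1$ is ``routine to verify'' for every $T'\subseteq T$. Neither half of this is right. The flattening image is contained in $\spn\{x_{a,j}\}\cap\spn\{y_{b,j}\}$, but individual $y_{b,j}$ need not lie in $\spn\{x_{a,j}\}$ unless the complementary factors $\{y_{b,\hat\jmath}\}$ are linearly independent, which you have no right to assume. More importantly, even if the containment held, it gives only an \emph{upper} bound on dimensions of $y$-spans, whereas the splitting hypothesis needs a \emph{lower} bound; the theorem's assumptions say nothing about subsets $T'$ consisting mostly (or entirely) of $y$'s. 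A concrete failure: if $y_1=y_2$ (perfectly allowed in a competing decomposition), the subset $T'=\{-y_1,-y_2\}$ has every $d_j^{T'}=1$, so the inequality $4\leq\sum_j(1-1)+1=1$ is nonsense. Your inductive step has a second gap: after a split $T=T_1\sqcup T_2$ you need $|J_i|\leq|I_i|$ to invoke the inductive hypothesis of Theorem~\ref{k-gen} on the pair $(\sum_{I_i}x_a,\sum_{J_i}y_b)$, but a single split gives only $\sum_i|J_i|\leq\sum_i|I_i|$, not the termwise inequality. Finally, ruling out a pure-$y$ part is \emph{not} ``a direct consequence of the splitting theorem applied to that sub-multiset,'' since no hypothesis constrains a pure-$y$ sub-multiset.

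The paper sidesteps all of this with a direct (non-inductive) argument. Decompose $\{x_a:a\in[n+r]\}$ into its connected components $T_1\sqcup\dots\sqcup T_t$; each has size $\geq2$ since $\sum_{a\in T_p}x_a=0$. For a component with $|T_p\cap[n]|\geq|T_p\cap([n+r]\setminus[n])|$, apply the given hypothesis to $S=T_p\cap[n]$ and the trivial monotonicity $d_j^{T_p}\geq d_j^{T_p\cap[n]}$ to get $|T_p|\leq 2|T_p\cap[n]|\leq\sum_j(d_j^{T_p}-1)+1$, so Corollary~\ref{original_conjecture} forces $T_p$ to split — contradiction unless $|T_p\cap[n]|=|T_p\cap([n+r]\setminus[n])|=1$. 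Summing $|T_p\cap[n]|$ and $|T_p\cap([n+r]\setminus[n])|$ over $p$ and using $r\leq n$ then shows no component can be $y$-heavy, hence every $T_p$ is a matched $\{x,y\}$ pair, giving $r=n$ and the multiset equality in one stroke. This avoids any need to bound $y$-span dimensions, avoids induction, and handles the pure-$x$/pure-$y$ cases automatically within the count.
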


Note that the computational cost of checking the conditions of our Theorem~\ref{k-gen} is essentially the same as that of checking the conditions of Kruskal's theorem. In both cases, the quantities $d_j^S$ must be computed for all $j \in [m]$ and $S\subseteq [n]$ with $2\leq \abs{S} \leq n$. To verify Kruskal's conditions, one uses these quantities to compute the Kruskal ranks, and then checks the single inequality $2n \leq \sum_{j=1}^m (k_j-1)+1$. To verify the conditions of our generalization, one checks a separate inequality $\;2 \abs{S} \leq\sum_{j=1}^m (d_j^S-1)+1$ for every $S$.

To see that Theorem~\ref{k-gen} contains Kruskal's theorem, assume the conditions of Kruskal's theorem hold and note that for any subset $S \subseteq [n]$, the multiset of product tensors $\{x_a : a \in S\}$ satisfies $d_j^S \geq \min\{k_j, \abs{S}\}$. Using this fact, it is easy to verify that ${2\abs{S} \leq \sum_{j=1}^m (d_j^S-1)+1}$ for every subset $S \subseteq [n]$ with $2 \leq \abs{S} \leq n$.

In Section~\ref{uniqueness_applications} we compare Theorem~\ref{k-gen} to the uniqueness criteria of Domanov, De Lathauwer, and S\o{}rensen (DLS), which are the only known extensions of Kruskal's theorem that we are aware of \cite{domanov2013uniqueness,domanov2013uniqueness2,domanov2014canonical,sorensen2015new,sorensen2015coupled}. All of these extensions rely on Kruskal's original permutation lemma, and as a result, still require the k-ranks to be above a certain threshold. Our generalization uses a completely new proof technique, can certify uniqueness below this threshold, and contains many of these extensions. The cited results of DLS contain many similar but incomparable criteria, which can be difficult to keep track of. For clarity and future reference, in Theorem~\ref{uniqueness} we synthesize these criteria into a single statement. Using insight gained from this synthesization and our generalization of Kruskal's theorem, we propose a conjectural uniqueness criterion that would contain and unify every uniqueness criteria of DLS into a single, elegant statement.

For $m\geq 4$, Kruskal's theorem can be ``reshaped" by regarding multiple subsystems as a single subsystem. In Section~\ref{k-gen_proof} we present an analogous reshaping of Theorem~\ref{k-gen}, which has many more degrees of freedom to choose from than the reshaped Kruskal's theorem.

\subsection{A splitting theorem for product tensors}

We now state our splitting theorem, which we use in Section~\ref{k-gen_proof} to prove our generalization of Kruskal's theorem, and in Sections~\ref{interpolate},~\ref{waring_rank}, and~\ref{symmetric_non-rank} to obtain further results on tensor decompositions. We first require a definition.
\begin{definition}
Let $n\geq 2$ be an integer, and let $\V$ be a vector space over a field $\field$. We say that a multiset of non-zero vectors $\{v_1,\dots,v_n\}\subseteq \V\setminus \{0\}$ \textit{splits}, or is \textit{disconnected}, if there exists a subset $S \subseteq \{v_1,\dots,v_n\}$ with $1 \leq \abs{S}\leq n-1$ for which
\begin{align}\label{sdef}
\spn\{v_1,\dots,v_n\}=\spn(S) \oplus \spn(S^c),
\end{align}
where $S^c:=\{v_1,\dots, v_n\} \setminus S$. In this case, we say that $S$ \textit{separates} $\{v_1,\dots, v_n\}$.
If $\{v_1,\dots,v_n\}$ does not split, then we say it is \textit{connected}.
\end{definition}
Note that $\{v_1,\dots,v_n\}$ splits if and only if it is disconnected as a matroid~\cite{oxley2006matroid}. We now state our main result.


\begin{theorem}[Splitting theorem]\label{conjecture}
Let $n \geq 2$ and $m \geq 2$ be integers, let $\V=\V_1\otimes \dots \otimes \V_m$ be a vector space over a field $\field$, let
\begin{align}
E=\{x_{a,1} \otimes \dots \otimes x_{a,m}: a \in[n] \} \subseteq \V\setminus\{0\}
\end{align}
be a multiset of product tensors, and for each $j \in [m]$, let
\begin{align}
d_j=\dim\spn \{ x_{a,j}: a \in [n]\}.
\end{align}
If $\dim\spn(E)\leq \sum_{j=1}^m (d_j-1)$, then $E$ splits.
\end{theorem}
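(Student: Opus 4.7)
The plan is to prove the contrapositive: if $E$ is connected, then $\dim\spn(E) \geq \sum_{j=1}^m (d_j - 1) + 1$. A preliminary reduction lets me assume $\V_j = \spn\{x_{a,j} : a \in [n]\}$, so that $\dim\V_j = d_j$, and further that $d_j \geq 2$ for every $j$: if some $d_j = 1$, all factor vectors in subsystem $j$ are parallel and can be absorbed as scalars into another subsystem, reducing $m$ by one without changing the inequality. Under these reductions the base case $n = 2$ is essentially vacuous, since a connected pair of product tensors must be parallel, forcing all $d_j = 1$.

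The main argument will proceed by strong induction on $n$. A key fact forced by connectedness is that $x_a \in \spn(E \setminus \{x_a\})$ for every $a$, so removing any single tensor does not change $\dim\spn(E)$. The easy inductive step is when there is some $a \in [n]$ such that $E' := E \setminus \{x_a\}$ is still connected and $x_{a,j} \in \spn\{x_{b,j} : b \neq a\}$ for every $j$ (so the factor dimensions $d_j$ are unchanged); applying the inductive hypothesis to $E'$ then immediately yields the desired bound.

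The hard case, which I expect to be the main obstacle, is when no such $a$ exists: for every $a$, either some $d_j$ strictly decreases upon removal of $x_a$, or $E \setminus \{x_a\}$ splits. In the former situation, a pigeonhole argument in the factor spaces forces $n \leq \sum_j d_j$, constraining the configuration sharply. In the latter, since $E$ itself is connected, $x_a$ must have a nontrivial component in both pieces of the splitting $E \setminus \{x_a\} = S \sqcup S^c$, and the plan is to apply the inductive hypothesis separately to the subconfigurations $S \cup \{x_a\}$ and $S^c \cup \{x_a\}$ and add the resulting bounds. Orchestrating this requires a delicate accounting of how each $d_j$ splits between the two parts and how much new factor dimension is contributed by the bridging tensor $x_a$; verifying that both subconfigurations are themselves connected is likely where most of the technical work lies.

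An alternative route would be to induct on $m$ by flattening the last two subsystems into a single factor of dimension $d_{m-1,m} := \dim\spn\{x_{a,m-1} \otimes x_{a,m} : a \in [n]\}$. The $m = 2$ case of the theorem, applied to the rank-one matrices $\{x_{a,m-1} \otimes x_{a,m}\}$ in $\V_{m-1} \otimes \V_m$, would show that either $d_{m-1,m} \geq d_{m-1} + d_m - 1$, in which case the inductive bound for $m-1$ subsystems immediately upgrades to the bound for $m$, or this auxiliary multiset splits in $\V_{m-1} \otimes \V_m$, and one must then lift that splitting back to $E$. The genuine difficulty of this route is the $m = 2$ base case itself, which is not accessible from Kruskal-type permutation arguments and presumably requires the novel linear-algebraic technique advertised in the abstract.
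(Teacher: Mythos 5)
Both routes you sketch leave the genuine difficulty unaddressed, and the first one has a quantitative gap on top of that. In the induction-on-$n$ route, first note that your ``former situation'' (some $d_j$ drops when $x_a$ is removed) is vacuous: if $x_{a,j}\notin\spn\{x_{b,j}:b\neq a\}$, pick $f\in\V_j^*$ killing the other $j$-th factors but not $x_{a,j}$ and contract subsystem $j$; this shows $x_a\notin\spn(E\setminus\{x_a\})$, so $E$ already splits. The only hard case is therefore that $E\setminus\{x_a\}$ splits as $S\sqcup S^c$ for every $a$. Your plan to apply the inductive hypothesis to $S\cup\{x_a\}$ and $S^c\cup\{x_a\}$ then fails in two ways. (a) These two multisets need not be connected at all: take $E=\{x_1,x_2,x_3\}$ a circuit of pairwise non-parallel product tensors; $E\setminus\{x_3\}$ splits as $\{x_1\}\sqcup\{x_2\}$, but $\{x_1,x_3\}$ is disconnected, so the IH simply does not apply. (b) Even in the ideal case where both pieces are connected, the arithmetic falls one short: since $E$ is connected, $x_a\notin\spn(S)$ and $x_a\notin\spn(S^c)$, so
\begin{align}
\dim\spn(S\cup\{x_a\})+\dim\spn(S^c\cup\{x_a\})=\dim\spn(E)+2,
\end{align}
while submodularity gives only $d_j^{S\cup\{a\}}+d_j^{S^c\cup\{a\}}\geq d_j+1$, so adding the two inductive bounds yields $\dim\spn(E)\geq\sum_j(d_j-1)$, not $\sum_j(d_j-1)+1$. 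This loss of one unit is exactly the cost of gluing along a single bridging tensor, and nothing in your setup recovers it.

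Your second route, inducting on $m$ by flattening, is essentially the reduction the paper makes: a short lemma shows that connectedness of $E$ (viewed as a $2$-partite multiset in $(\V_1\otimes\cdots\otimes\V_{m-2})\otimes(\V_{m-1}\otimes\V_m)$) forces connectedness of the flattened factor multiset $\{x_{a,m-1}\otimes x_{a,m}\}$, so the ``lift the splitting back'' alternative you worry about never actually arises, and the step from $m-1$ to $m$ is clean. But that means the entire content of the theorem is the $m=2$ case, and you offer no idea for it. The paper's proof of the bipartite case is the real novelty: it takes an \emph{ear decomposition} of the connected matroid $E$ into a chain of circuits $C_1\cup\cdots\cup C_t$ with $C_p\cap E_{p-1}\neq\varnothing$ and $\dim\spn(E_p)-\dim\spn(E_{p-1})=|E_p\setminus E_{p-1}|-1$, proves a base inequality $d^{C_1}\geq d_1^{C_1}+d_2^{C_1}-1$ by constructing a pair of functionals $f\otimes g$ to contradict circuitness, and then shows the increment $|E_p\setminus E_{p-1}|-1$ dominates the combined increment in $d_1$ and $d_2$ at each step (again by a functional argument plus submodularity). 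Without something that plays the role of the ear decomposition — a way to build up $E$ that tracks rank increments in all three matroids simultaneously — neither of your outlines closes.
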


In Section~\ref{conjecture_kruskal} we use Derksen's result \cite{DERKSEN2013708} to prove that the inequality appearing in Theorem~\ref{conjecture} cannot be weakened.

We now give a rough sketch of how our splitting theorem implies Theorem~\ref{k-gen}, which we formalize in Section~\ref{k-gen_proof}. First, a direct consequence of Theorem~\ref{conjecture} is that $E$ splits whenever $n \leq \sum_{j=1}^m (d_j-1)+1$ (see Corollary~\ref{original_conjecture}). To prove Theorem~\ref{k-gen}, let $\{x_a : a \in [n]\}$ be a multiset of product tensors satisfying the assumptions of Theorem~\ref{k-gen}, and let $\{y_a : a \in [r]\}$ be a multiset of $r\leq n$ product tensors for which $\sum_{a \in [n]} x_a = \sum_{a \in [r]} y_a$. Consider the multiset of $[n+r]$ product tensors
\begin{align}
E=\{x_a : a \in [n]\} \cup \{-y_a : a \in [r]\}.
\end{align}
Since $2n \leq \sum_{j=1}^m (d_j^{[n]}-1)+1$, $E$ splits. Since $\Sigma(E)=0$, it follows that $\Sigma(S)=\Sigma(S^c)=0$ for any separator $S$ of $E$. Now, continue applying the splitting theorem to $S$ and $S^c$, until every multiset has size 2, and contains one element each of ${\{x_a : a \in [n]\}}$ and ${\{-y_a : a \in [r]\}}$.

\subsection{Further applications of the splitting theorem to tensor decompositions}

In Sections~\ref{interpolate},~\ref{waring_rank}, and~\ref{symmetric_non-rank} we use the splitting theorem to prove further uniqueness results and sharp lower bounds on tensor rank. In Section~\ref{interpolate} we prove a general statement that interpolates between our generalization of Kruskal's theorem and a natural offshoot of our splitting theorem (mentioned above), obtaining uniqueness results for weaker notions of uniqueness. In Section~\ref{waring_rank} we prove sharp lower bounds on tensor rank and \textit{Waring rank}, a notion of rank for symmetric tensors. In Sections~\ref{interpolate} and~\ref{symmetric_non-rank} we obtain uniqueness results for \textit{non-rank} decompositions, a novel concept introduced in this work. We close this introduction by reviewing these results in more detail.

It is known that if a multiset of product tensors ${\{x_a : a\in[n]\}}$ satisfies
\begin{align}\label{intro_eq}
{n+r \leq \sum_{j=1}^m (k_j-1)+1}
\end{align}
for $r=0$, then it is linearly independent, and if it satisfies~\eqref{intro_eq} for $r=1$, then the only product tensors in $\spn\{x_a : a\in[n]\}$ are scalar multiples of $x_1,\dots, x_n$~\cite{1751-8121-48-4-045303}. When ${r=n}$, it holds that $\sum_{a \in [n]} x_a$ constitutes a unique tensor rank decomposition, by Kruskal's theorem. It is natural to ask what happens for $r \in \{0,1,\dots, n\}$. In Section~\ref{lowrank} we use our splitting theorem to prove that when the inequality~\eqref{intro_eq} holds, the only rank ${\leq r}$ tensors in ${\spn\{x_a : a\in[n]\}}$ are those that can be written (uniquely) as a linear combination of ${\leq r}$ elements of $\{x_a : a\in[n]\}$, which interpolates between Kruskal's theorem for $r=n$, and the results of~\cite{1751-8121-48-4-045303} for $r \in \{0,1\}$. We generalize our interpolating statement in a similar manner to our generalization of Kruskal's theorem (Theorem~\ref{hakyegen}). We also interpolate to weaker notions of uniqueness, which are explained further at the end of this introduction. We remark that the ${m=2, r=0}$ case of a result in this section was proven by Pierpaola Santarsiero in unpublished work, using a different proof technique.


The interpolating statement described in the previous paragraph immediately implies the following lower bound on tensor rank:
\begin{align}
\rank\bigg[\sum_{a \in [n]} x_a\bigg]\geq \min\bigg\{n, \sum_{j=1}^m (k_j-1)+2-n\bigg\}.
\end{align}
In Section~\ref{waring_rank} we use our splitting theorem to improve this bound. Namely, provided that the k-ranks are sufficiently balanced, we prove that two of the k-ranks $k_i,k_j$ appearing in this bound can be replaced by standard ranks $d_i,d_j$, improving this bound when the ranks and k-ranks are not equal. Our improved bound specializes to Sylvester's matrix rank inequality when $m=2$ \cite{horn2013matrix}. In Section~\ref{sharp_tensor_bound} we prove that our improved bound is sharp in a wide parameter regime.


In Section~\ref{symmetric_non-rank} we use our splitting theorem to prove uniqueness results for \textit{non-Waring rank} decompositions of symmetric tensors. (Our terminology for symmetric tensor decompositions is analogous to that of general tensor decompositions, and we refer the reader to Section~\ref{mp} for a formal introduction.)
In particular, we prove a condition on a symmetric decomposition $v=\sum_{a \in [n]} \alpha_a v_a^{\otimes m}$ for which any other symmetric decomposition must contain at least $r_{\min}$ terms, where $r_{\min}$ depends on the rank and k-rank of $\{v_a : a \in [n]\}$. For $r_{\min} \leq n$, this gives a Waring rank lower bound that is contained in our lower bound described in the previous paragraph. For $r_{\min}=n+1$, this gives a uniqueness result for symmetric tensors that is contained in Theorem~\ref{k-gen}, but is stronger than Kruskal's theorem in a wide parameter regime. Our main contribution in this section is the case $r_{\min}>n+1$, which produces an even stronger statement than uniqueness: There are no symmetric decompositions of $v$ into a linear combination of fewer than $r_{\min}$ terms, aside from $v=\sum_{a \in [n]} \alpha_a v_a^{\otimes m}$ (up to trivialities). This is an example of what we call a uniqueness result for \textit{non-rank} decompositions of a tensor.

%

In Section~\ref{non-rank} we prove further uniqueness results for non-rank decompositions of (possibly non-symmetric) tensors. In particular, we give conditions on a multiset of product tensors $\{x_a:a\in[n]\}$ for which whenever $\sum_{a \in [n]} x_a=\sum_{a \in [r]} y_a$ for some $r>n$ and multiset of product tensors $\{y_a : a \in [r]\}$, there exist subsets $R\subseteq [n]$, $Q\subseteq [r]$ such that $\abs{Q}=\abs{R}= q$ for some fixed positive integer $q$, and $\{x_a : a\in Q\}=\{y_a : a \in R\}$. In contrast to our non-rank uniqueness results of Section~\ref{symmetric_non-rank}, which apply only to symmetric decompositions of symmetric tensors, the results of this subsection apply to arbitrary tensor decompositions.

In Section~\ref{non-rank_applications} we identify two potential applications of our uniqueness results for non-rank decompositions: First, they allow us to define a natural hierarchy of tensors in terms of ``how unique" their decompositions are. Second, any uniqueness result for non-rank decompositions can be turned around to produce a result in the more standard setting, in which one starts with a decomposition into $n$ terms, and wants to control the possible decompositions into fewer than $n$ terms.

From the proof sketch of our generalization of Kruskal's theorem that appears at the end of the previous subsection, it is easy to surmise that if $\sum_{a \in [n]} x_a=\sum_{a \in [r]} y_a$, and $2n \leq \sum_{j=1}^m (d_j^{[n]}-1)+1$, then there exist non-trvial subsets $Q \subseteq [n]$ and $R\subseteq [r]$ for which $\sum_{a \in Q} x_a =\sum_{a \in R} y_a$. This conclusion can be viewed as an extremely weakened form of uniqueness, and it is natural to ask what statements can be made for notions of uniqueness in between the standard one and this weakened one. We answer this question in Sections~\ref{lowrank} and~\ref{non-rank}.

We say that a set of non-zero vectors forms a \textit{circuit} if it is linearly dependent and any proper subset is linearly independent. As a special case of our splitting theorem, in Corollary~\ref{linincor} we obtain an upper bound on the number of subsystems $j \in [m]$ for which a {circuit} of product tensors can have $d_j \geq 2$. This improves recent bounds obtained in~\cite{Ballico:2018aa,ballico2020linearly}, and is sharp.

\section{Acknowledgments}

BL thanks Edoardo Ballico, Luca Chiantini, Matthias Christandl, Harm Derksen, Dragomir {\DJ}okovi{\'{c}}, Ignat Domanov, Timothy Duff, Joshua A. Grochow, Nathaniel Johnston, Joseph M. Landsberg, Lieven De Lathauwer, Chi-Kwong Li, Daniel Puzzuoli, Pierpaola Santarsiero, William Slofstra, Hans De Sterck, and John Watrous for helpful discussions and comments on drafts of this manuscript. BL thanks Luca Chiantini and Pierpaola Santarsiero for helpful feedback on Sections~\ref{waring_rank} and~\ref{symmetric_non-rank}. In previous work \cite{tensor}, BL conjectured a (slightly weaker) ``non-minimal version" of Theorem~\ref{conjecture}. BL thanks Harm Derksen for suggesting the splitting version that appears here. BL thanks Dragomir {\DJ}okovi{\'{c}} for first suggesting a connection to Kruskal's theorem, and for suggesting that these results might hold over an arbitrary field. BL thanks Joshua A. Grochow for first asking about uniqueness results for non-rank decompositions, which inspired our results in Sections~\ref{non-rank} and~\ref{symmetric_non-rank}.

\section{Mathematical preliminaries}\label{mp}

Here we review some mathematical background for this work that was not covered in the introduction. For vector spaces $\V_1, \dots, \V_m$ over a field $\field$, we use $\pro{\V_1 : \dots : \V_m}$ to denote the set of (non-zero) product tensors in $\V_1\otimes  \dots \otimes \V_m$. This set forms an algebraic variety given by the affine cone over the {\it Segre variety} $\setft{Seg}(\mathbb{P} \V_1 \times \dots \times \mathbb{P} \V_m)$, with the point $0$ removed. We use symbols like $a, b$ to index tensors, and symbols like $i, j$ to index subsystems. For vector spaces $\V$ and $\W$, let $\Lin(\V,\W)$ denote the space of linear maps from $\V$ to $\W$. We use the shorthand $\Lin(\V)=\Lin(\V,\V)$. For a vector space $\V$ of dimension $d$, let $\{e_1,\dots, e_d\}$ be a standard basis for $\V$.

For a product tensor $z \in \pro{\V_1 : \dots : \V_m}$, the vectors ${z_{j} \in \V_j}$ for which ${z=z_1 \otimes \dots \otimes z_m}$ are uniquely defined up to scalar multiples $\alpha_{1} z_1, \dots, \alpha_m z_m$ such that $\alpha_1\cdots \alpha_m=1$. For positive integers $n$ and $m$, we frequently define multisets of product tensors
\begin{align}
\{ x_a : a \in [n]\} \subseteq \pro{\V_1 : \dots : \V_m}
\end{align}
without explicitly defining corresponding vectors $\{x_{a,j}\}$ such that
\begin{align}
x_a=x_{a,1}\otimes \dots \otimes x_{a,m}
\end{align}
for all $a \in [n]$. In this case, we implicitly fix some such vectors, and refer to them without further introduction.

 We use the notation
\begin{align}
x_{a, \hat{j}}&= x_{a,1} \otimes \dots \otimes x_{a,j-1}\otimes x_{a, j+1} \otimes \dots \otimes x_{a,m},\\
\V_{\hat{j}}&=\V_1 \otimes \dots \otimes \V_{j-1}\otimes \V_{j+1}\otimes\dots \otimes \V_{m},
\end{align}
so $x_{a,\hat{j}}\in \V_{\hat{j}}$. Note that $\V_1\otimes \dots \otimes \V_m$ is naturally isomorphic to $\Lin(\V_j^*,\V_{\hat{j}})$ for any $j \in [m]$, where $\V_j^*$ is the dual vector space to $\V_j$. The rank of a tensor in $\V_1 \otimes \V_2$ is equal to the rank of the corresponding linear operator in $\Lin(\V_1^*, \V_2)$. We denote the rank of a tensor $v\in \V$, viewed as an element of $\Lin(\V_j^*,\V_{\hat{j}})$, by $\rank_j(v)$. The \textit{flattening rank} of $v$ is defined as $\max\{\rank_1(v),\dots , \rank_m(v)\}$. Note that the tensor rank of $v$ is lower bounded by the flattening rank of $v$.

We write $S \cup T$ to denote the union of two sets $S$ and $T$. If $S$ and $T$ happen to be disjoint, we often write $S \sqcup T$ instead to remind the reader of this fact. For a positive integer $t$, we say that a collection of subsets $S_1,\dots, S_t \subseteq T$ \textit{partitions} $T$ if $S_p \cap S_q =\{\}$ for all $p\neq q \in [t]$, and $S_1 \sqcup \dots \sqcup S_t=T$.

For a multiset of non-zero vectors $E=\{v_1,\dots, v_n\}\subseteq \V$, a \textit{connected component} of $E$ is an inclusion-maximal connected subset of $E$. Any multiset of non-zero vectors $E$ can be (uniquely, up to reordering) partitioned into disjoint connected components ${T_1 \sqcup \dots \sqcup T_t=E}$~\cite[Proposition 4.1.2]{oxley2006matroid}. Observe that
\begin{align}
\spn(E)=\bigoplus_{i\in [t]} \spn(T_i),
\end{align}
and note that $S \subseteq E$ separates $E$ if and only if
\begin{align}
\dim \spn \{ v_1,\dots, v_n\} = \dim \spn \{v_a : a \in S\} + \dim\spn\{v_a : a \in S^c\}
\end{align}
if and only if
\begin{align}
\spn \{v_a : a \in S\} \cap \spn\{v_a : a \in S^c\}= \{0\}
\end{align}
(see~\cite[Proposition 4.2.1]{oxley2006matroid}).

In the remainder of this section, we formally introduce symmetric tensors and symmetric tensor decompositions, which are natural analogues of tensors and tensor decompositions. For a positive integer $m \geq 2$ and a vector space $\W$ over a field $\field$ with $\setft{Char}(\field)>m$ or $\Char(\field)=0$, we say that a tensor $v \in \W^{\otimes m}$ is \textit{symmetric} if it is invariant under permutations of the subsystems. The \textit{Waring rank} of a symmetric tensor $v$, denoted by $\setft{WaringRank}(v)$, is the minimum number $n$ for which $v$ is equal to a linear combination of $n$ symmetric product tensors. A decomposition of $v$ into a linear combination of $\setft{WaringRank}(v)$ symmetric product tensors is called a \textit{Waring rank decomposition} of $v$. A decomposition of $v$ into a linear combination of symmetric product tensors (not necessarily of minimum number) is known simply as a \textit{symmetric decomposition} of $v$.

A symmetric decomposition of $v$
\begin{align}\label{Waring_one}
v = \sum_{a \in [n]} \alpha_a v_a^{\otimes m}
\end{align}
is said to be the \textit{unique Waring rank decomposition} of $v$ if for any non-negative integer $r \leq n$, multiset of non-zero vectors $\{u_a : a \in [r]\} \subseteq \W \setminus \{0\}$, and non-zero scalars $\{\beta_a : a \in [r]\} \subseteq \field^{\times}$ for which
\begin{align}\label{Waring_two}
v = \sum_{a \in [r]} \beta_a u_a^{\otimes m},
\end{align}
it holds that $r=n$ and
\begin{align}
\{\alpha_a v_a^{\otimes m} : a \in [n]\} = \{\beta_a u_a^{\otimes m} : a \in [n]\}.
\end{align}
More generally, for a positive integer $\tilde{n} \geq n$, we say that the symmetric decomposition~\eqref{Waring_one} is the \textit{unique symmetric decomposition of $v$ into at most $\tilde{n}$ terms} if for any $r \leq \tilde{n}$ and symmetric decomposition~\eqref{Waring_two}, either
\begin{align}
{\krank(u_a : a \in [r])=1},
\end{align}
or $r=n$ and
\begin{align}
\{\alpha_a v_a^{\otimes m} : a \in [n]\} = \{\beta_a u_a^{\otimes m} : a \in [n]\}.
\end{align}
Note that~\eqref{Waring_one} is the unique Waring rank decomposition of $v$ if and only if it is the unique symmetric decomposition of $v$ into at most $n$ terms. We refer to results that certify uniqueness of a symmetric decomposition into at most $\tilde{n}>n$ terms as \textit{uniqueness results for non-Waring rank decompositions}. We present such results in Section~\ref{symmetric_non-rank}.

Our assumption that $\setft{Char}(\field)>m$ or $\Char(\field)=0$ in the symmetric case ensures that the symmetric subspace is isomorphic to the space of homogeneous polynomials over $\field$ of degree $m$ in $\dim(\W)$ variables, and that every symmetric tensor has finite Waring rank (see e.g.~\cite[Appendix~A]{iarrobino1999power} and~\cite[Section 2.6.4]{landsberg2012tensors}).

\section{Proving the splitting theorem}

In this section, we prove Theorem~\ref{conjecture}. We first observe the following basic fact.

\begin{prop}\label{subsystem_connected_lemma}
Let $n \geq 2$ be an integer, let $\V=\V_1\otimes \V_2$ be a vector space over a field $\field$, and let
\begin{align}
E=\{x_a \otimes y_a: a \in[n] \} \subseteq \pro{\V_1 : \V_2}
\end{align}
be a multiset of product tensors. If $E$ is connected, then $\{x_a : a \in [n]\}$ and $\{y_a : a \in [n]\}$ are both connected.
\end{prop}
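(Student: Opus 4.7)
The plan is to prove the contrapositive: if either $\{x_a : a \in [n]\}$ or $\{y_a : a \in [n]\}$ splits, then so does $E$. By symmetry (swapping the roles of $\V_1$ and $\V_2$), it suffices to handle the case in which $\{x_a : a \in [n]\}$ splits. So suppose $S \subseteq [n]$ satisfies $1 \leq \abs{S} \leq n-1$ and separates $\{x_a : a \in [n]\}$. I aim to show that the same index set $S$ separates $E$.

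Using the characterization of separation recalled just before the proposition statement (via Oxley, Proposition 4.2.1), it is enough to show
\begin{align}
\spn\{x_a \otimes y_a : a \in S\} \;\cap\; \spn\{x_a \otimes y_a : a \in S^c\} = \{0\}.
\end{align}
The key step is to use the isomorphism $\V_1 \otimes \V_2 \iso \Lin(\V_2^*, \V_1)$ mentioned in Section~\ref{mp}. Under this identification, each product tensor $x_a \otimes y_a$ corresponds to a rank-one operator whose image is contained in $\spn\{x_a\}$. Consequently, any linear combination $\sum_{a \in S} \lambda_a (x_a \otimes y_a)$ corresponds to an operator whose image is contained in $\spn\{x_a : a \in S\}$, and likewise for $S^c$.

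Thus if $w$ lies in the intersection above, viewed as an element of $\Lin(\V_2^*, \V_1)$ its image is contained in $\spn\{x_a : a \in S\} \cap \spn\{x_a : a \in S^c\} = \{0\}$, where the trivial intersection holds because $S$ separates $\{x_a : a \in [n]\}$. Hence $w = 0$, which gives the desired separation of $E$ and completes the proof. There is no substantial obstacle here; the statement is essentially a direct consequence of the trivial-intersection characterization of separation together with the observation that a product tensor's ``first-subsystem factor'' controls the image of the corresponding operator.
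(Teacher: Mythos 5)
Your proof is correct and is essentially the same argument as the paper's, just framed as a contrapositive rather than a contradiction. Where you say that any $w$ in $\spn\{x_a \otimes y_a : a \in S\} \cap \spn\{x_a \otimes y_a : a \in S^c\}$, viewed in $\Lin(\V_2^*,\V_1)$, has image inside $\spn\{x_a : a \in S\} \cap \spn\{x_a : a \in S^c\} = \{0\}$ and hence vanishes, the paper instead assumes a nonzero such $v$ exists and applies $(\I \otimes f)$ for a functional $f \in \V_2^*$ with $(\I \otimes f)v \neq 0$, landing a nonzero vector in the same trivial intersection; these are two phrasings of the identical observation that the first-subsystem factor controls the image of the corresponding operator.
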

\begin{proof}
Suppose toward contradiction that $E$ is connected and $\{x_a : a \in [n]\}$ splits, i.e.
\begin{align}\label{simple_lemma_equation}
\dim\spn\{x_a: a \in [n]\}=\spn\{x_a: a \in S\} \oplus \spn\{x_a:a \in S^c\}
\end{align}
for some non-empty proper subset $S \subseteq [n]$. Since $E$ is connected, there exists a non-zero vector
\begin{align}
v \in \spn\{x_a \otimes y_a: a \in S\} \cap \spn\{x_a \otimes y_a: a \in S^c\}.
\end{align}
Let $f \in \V_2^*$ be any linear functional such that $(\I \otimes f) v \neq 0.$ Then $(\I \otimes f) v$ is a non-zero element of
\begin{align}
\spn\{x_a: a \in S\} \cap \spn\{x_a:a \in S^c\},
\end{align}
contradicting~\eqref{simple_lemma_equation}. The result is obviously symmetric under permutation of $\V_1$ and $\V_2$.
\end{proof}

It is not difficult to see that Theorem~\ref{conjecture} follows directly from the $m=2$ case of Theorem~\ref{conjecture}, Proposition~\ref{subsystem_connected_lemma}, and an inductive argument (we omit this proof). We therefore need only prove the $m=2$ case of Theorem~\ref{conjecture}, which we now explicitly state for clarity.

\begin{theorem}[$m=2$ case of Theorem~\ref{conjecture}]\label{bipartite}
Let $n \geq 2$ be an integer, let $\V=\V_1\otimes \V_2$ be a vector space over a field $\field$, and let
\begin{align}
E=\{x_a \otimes y_a: a \in[n] \} \subseteq \pro{\V_1 : \V_2}
\end{align}
be a multiset of product tensors. Let
\begin{align}
d_1=\dim\spn \{ x_a: a \in [n]\}
\end{align}
and
\begin{align}
d_2=\dim\spn \{ y_a: a \in [n]\}.
\end{align}
If $E$ is connected, then $\dim\spn(E)\geq d_1+d_2-1$.
\end{theorem}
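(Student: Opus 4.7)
The plan is to prove Theorem~\ref{bipartite} by induction on $n$.

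For the base case $n=2$, connectedness of $\{x_1\otimes y_1,\,x_2\otimes y_2\}$ is equivalent to the two tensors being linearly dependent, hence proportional: $x_1\otimes y_1 = c\,x_2\otimes y_2$ for some $c\neq 0$. Then $x_1\otimes y_1\in\spn\{x_2\}\otimes\spn\{y_2\}$, and the standard observation that a nonzero product tensor lying in a tensor product of subspaces must have each factor in the corresponding subspace gives $x_1\parallel x_2$ and $y_1\parallel y_2$. Hence $d_1=d_2=\dim\spn(E)=1$, and the bound $d_1+d_2-1=1$ is attained.

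For the inductive step with $n\geq 3$, I would split into two cases depending on whether $E$ is a matroid circuit (i.e., minimally linearly dependent). If $E$ is a circuit then $\dim\spn(E)=n-1$, and there is a dependence $\sum_a\alpha_a(x_a\otimes y_a)=0$ with every $\alpha_a\neq 0$ (otherwise a proper subset would already be dependent). Collecting the factor vectors into $X=[x_1\mid\cdots\mid x_n]\in\field^{d_1\times n}$ and $Y=[y_1\mid\cdots\mid y_n]\in\field^{d_2\times n}$, each of full row rank, and writing $D_\alpha=\diag{\alpha_1,\dots,\alpha_n}$, the dependence becomes $X\,D_\alpha\,Y^{\t}=0$. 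Since $D_\alpha$ is invertible, $\rank(XD_\alpha)=d_1$, so $\im(Y^{\t})\subseteq\ker(X)$ and comparing dimensions yields $d_2\leq n-d_1$. Therefore $\dim\spn(E)=n-1\geq d_1+d_2-1$.

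If $E$ is connected but not a circuit, I would invoke the matroid-theoretic fact that every connected matroid on at least two elements that is not itself a circuit has some element whose deletion is still connected (see, e.g., Oxley~\cite{oxley2006matroid}) to obtain $e=x\otimes y\in E$ with $E':=E\setminus\{e\}$ connected. The inductive hypothesis gives $\dim\spn(E')\geq d_1(E')+d_2(E')-1$. Connectedness of $E$ forbids $\{e\}$ from being a separator, so $e\in\spn(E')\subseteq W_1\otimes W_2$, where $W_j$ denotes the span of the $j$-th factors of the elements of $E'$. Applying the product-tensor observation once more forces $x\in W_1$ and $y\in W_2$, so $d_j(E')=d_j$ for $j=1,2$. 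Combined with the equality $\dim\spn(E)=\dim\spn(E')$ (since $e\in\spn(E')$), this closes the induction. The main obstacle is the matroid-theoretic lemma underlying the non-circuit case; if a self-contained proof is desired, it can be extracted by a careful case analysis of circuits in $E$, but citing Oxley is cleaner.
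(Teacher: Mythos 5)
Your base case and your circuit case are both correct, and the matrix argument in the circuit case is clean: writing the dependence as $X D_\alpha Y^{\t}=0$ with $X,Y$ of full row ranks $d_1,d_2$ and $D_\alpha$ invertible immediately gives $d_2 = \rank(Y^{\t}) \le \dim\ker(X D_\alpha) = n-d_1$, hence $d_1+d_2-1 \le n-1 = \dim\spn(E)$. (One small slip: $\im(Y^{\t})$ lies in $\ker(X D_\alpha)$, not $\ker(X)$, but the dimensions agree.)

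The non-circuit case, however, has a genuine gap: the matroid fact you invoke is false. It is \emph{not} true that every connected matroid with at least two elements that is not a circuit has an element whose deletion remains connected. The cycle matroid $M(K_{2,3})$ is a counterexample: it is connected (the graph is $2$-connected), it is not a circuit (its circuits are the three $4$-element sets of edges corresponding to $4$-cycles), yet for every edge $e$ the deletion $M(K_{2,3})\setminus e$ has a coloop (the remaining edge of the length-$2$ path through the degree-$2$ vertex incident to $e$) and is therefore disconnected. What \emph{is} true is Tutte's result that for every $e$ at least one of $M\setminus e$ or $M/e$ is connected; but your induction specifically needs deletion, and also needs the deleted element to lie in the span of the rest so that $\dim\spn$, $d_1$, and $d_2$ are unchanged. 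Since your inductive hypothesis only applies to connected $E'$, and $M(K_{2,3})$-type behavior blocks every single-element deletion, the induction does not close. This is precisely why the paper reaches for the ear decomposition (Lemma~\ref{ear}): rather than peel off one element at a time, it peels off an entire ear $C_p \setminus E_{p-1}$, which may contain several elements, and bounds the resulting rank increment in one step (Claim~\ref{hard_claim}). Your circuit-case computation is essentially an alternative proof of the paper's Claim~\ref{simple_claim}, and could be kept as a building block, but to make the induction go through you would need to replace the single-element peeling with an ear-based (or otherwise multi-element) peeling and prove the analogue of Claim~\ref{hard_claim} for it.
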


To prove Theorem~\ref{bipartite}, we require a matroid-theoretic construction called the \textit{ear decomposition} of a connected matroid (see, e.g. \cite{Coullard:1996aa}). For completeness, we review the construction here. We refer the reader to \cite{oxley2006matroid} for the basic matroid-theoretic arguments used in this proof.

\begin{lemma}[Ear decomposition]\label{ear}
Let $n \geq 2$ be an integer, let $\V$ be a vector space over a field $\field$, and let $E=\{v_1,\dots, v_n\} \subseteq \V\setminus\{0\}$ be a multiset of non-zero vectors. If $E$ is connected, then there exists a collection of circuits $C_1, \dots, C_t \subseteq E$ such that
\begin{align}
E=C_1 \cup C_2 \cup \dots \cup C_t,
\end{align}
and for each $p \in [t]$, the multisets $C_p$ and $E_p := C_1 \cup \dots \cup C_p$ satisfy the following two properties:
\begin{enumerate}
\item $C_p \cap E_{p-1} \neq \{\}$
\item $\dim\spn(E_p)-\dim\spn(E_{p-1})=|E_p \setminus E_{p-1}|-1$
\end{enumerate}
\end{lemma}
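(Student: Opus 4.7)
My plan is to construct the ear decomposition greedily: start with any circuit $C_1 \subseteq E$ and at each step append a carefully chosen circuit $C_p$. Such a $C_1$ exists because $E$ is a connected matroid on $n \geq 2$ non-loop elements and therefore is not free. For the base case $p = 1$, with the convention $E_0 = \emptyset$, property 1 is vacuous and property 2 reduces to the defining identity $\dim\spn(C_1) = |C_1| - 1$ of a circuit.

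For the inductive step, suppose $E_{p-1}$ has been built and $E_{p-1} \subsetneq E$. The key matroid-theoretic input is the standard fact that any two elements of a connected matroid lie in a common circuit \cite{oxley2006matroid}; applied to any $e \in E \setminus E_{p-1}$ and any $f \in E_{p-1}$, this guarantees that the collection of circuits $C \subseteq E$ satisfying $C \cap E_{p-1} \neq \emptyset$ and $C \not\subseteq E_{p-1}$ is non-empty. Among all such circuits I choose $C_p$ to minimize $|C_p \setminus E_{p-1}|$. Property 1 then holds by construction, and the procedure terminates after finitely many steps since each step adds at least one new element to $E_p$.

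The main obstacle is property 2. The upper bound
\begin{align}
\dim\spn(E_p) - \dim\spn(E_{p-1}) \leq |C_p \setminus E_{p-1}| - 1
\end{align}
is essentially automatic: the unique (up to scalar) linear dependency among the elements of $C_p$, whose coefficients are all nonzero, expresses a nonzero combination of elements of $C_p \setminus E_{p-1}$ as an element of $\spn(C_p \cap E_{p-1}) \subseteq \spn(E_{p-1})$, so the intersection $\spn(E_{p-1}) \cap \spn(C_p \setminus E_{p-1})$ has dimension at least one. The matching lower bound is where the minimality of $C_p$ is essential. I would prove it by contradiction: if the rank increase were strictly less than $|C_p \setminus E_{p-1}| - 1$, then this intersection would have dimension at least two, and by forming a suitable linear combination of two linearly independent vectors in the intersection that cancels a chosen coordinate in $C_p \setminus E_{p-1}$, I would produce a nonzero dependency supported on a strict subset of $(C_p \setminus E_{p-1}) \cup E_{p-1}$. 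Equivalently, in matroid language, $C_p \setminus E_{p-1}$ would fail to be a circuit of the contracted matroid $M/E_{p-1}$, and would strictly contain a smaller such circuit, which lifts to a circuit $D \subseteq E$ with $D \cap E_{p-1} \neq \emptyset$ and $|D \setminus E_{p-1}| < |C_p \setminus E_{p-1}|$, contradicting the minimality of $C_p$. The most delicate point is executing this lifting cleanly and ensuring $D \setminus E_{p-1}$ remains non-empty, which is handled by the standard correspondence between circuits of $M/E_{p-1}$ and circuits of $M$ meeting $E_{p-1}$.
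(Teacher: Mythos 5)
Your proof is correct, but it takes a genuinely different route from the paper's. The paper builds each ear explicitly: fix a basis $B \subseteq E_p$ of $\spn(E_p)$, greedily adjoin linearly independent vectors $u_1, u_2, \dots \in E \setminus E_p$ until $B \cup \{u_1, \dots, u_q\}$ first acquires a dependence, and take $C_{p+1}$ to be the fundamental circuit of $u_q$; property 2 then falls out immediately because $B \cup \{u_1,\dots,u_{q-1}\}$ is independent, and connectedness is used only to guarantee the greedy process terminates. You instead define $C_p$ extremally, as a minimizer of $\abs{C \setminus E_{p-1}}$ over circuits meeting but not contained in $E_{p-1}$, and this imports two matroid-theoretic facts the paper avoids: that any two elements of a connected matroid lie on a common circuit (for existence of a candidate $C_p$), and that the circuits of the contraction $M/E_{p-1}$ are the minimal non-empty sets of the form $C \setminus E_{p-1}$ with $C$ a circuit of $M$ (for the minimality contradiction in the lower bound of property 2). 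One spot in your sketch that deserves an explicit sentence: the lifting correspondence hands you a circuit $D$ of $M$ with $D \setminus E_{p-1} = D'$, but does not by itself force $D \cap E_{p-1} \neq \emptyset$; this holds here because if $D \cap E_{p-1}$ were empty then $D = D'$ would be a circuit of $M$ contained in the proper subset $C_p \setminus E_{p-1}$ of the circuit $C_p$, contradicting independence of proper subsets of a circuit. On balance, the paper's argument is more elementary and self-contained, relying only on linear algebra with a basis, while yours is more conceptual and shows the ears can be chosen minimal, at the cost of invoking standard but non-trivial matroid machinery.
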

\begin{proof}
Let $C_1 \subseteq E$ be an arbitrary circuit, which must exist because $E$ is non-empty and connected, and assume by induction that $C_1,\dots, C_p$ have already been constructed to satisfy properties 1 and 2. Let $B \subseteq E_p$ be a basis for $\spn(E_p)$, and choose vectors $u_1, u_2, \dots \in E \setminus E_p$ sequentially such that at each step $q$, $\{u_1,\dots, u_q\}$ is linearly independent. Terminate when
\begin{align}\label{ear_equation}
\dim\spn\{B \cup \{u_1,\dots, u_q\}\}=\abs{B}+q-1.
\end{align}
Note that this process must terminate, otherwise $E$ would split. Fixing $q$ to be the terminating step of this process, note that if $u_q$ is removed from $B \cup \{u_1,\dots, u_q\}$, then the resulting multiset is linearly independent, so $B \cup \{u_1,\dots, u_q\}$ contains a unique circuit containing $u_q$. Call this circuit $C_{p+1}$, and observe that properties 1 and 2 hold for ${E_{p+1}:=C_1 \cup \dots \cup C_{p+1}}$. The lemma follows by repeating this process until the circuits cover $E$.
\end{proof}

Now we prove Theorem~\ref{bipartite}.
\begin{proof}[Proof of Theorem~\ref{bipartite}]
For a subset $S \subseteq [n]$, let
\begin{align}
d^S&=\dim\spn\{x_a \otimes y_a : a \in S\},\\
d_1^S&=\dim\spn\{x_a : a \in S\},\\
d_2^S&=\dim\spn\{y_a: a \in S\}.
\end{align}
In a slight change of notation from Lemma~\ref{ear}, let $C_1,\dots, C_t \subseteq [n]$ be the index sets corresponding to an ear decomposition of $E$, and let $E_p=C_1 \cup \dots \cup C_p \subseteq [n]$ for each $p \in [t]$. The theorem follows from the following two claims
\begin{samepage}\begin{claim}\label{simple_claim}
\leavevmode\vspace{-\dimexpr\baselineskip + \topsep + \parsep} \begin{center}$d^{E_1} \geq d^{E_1}_1+d^{E_1}_2-1.$\end{center}
\end{claim}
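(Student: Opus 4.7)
The plan is to reduce the claim to an elementary rank inequality for matrices. Since $E_1 = C_1$ is a circuit, every proper subset of $C_1$ is linearly independent but $C_1$ itself is dependent, so $d^{E_1} = |C_1| - 1$. The claim therefore reduces to showing
\begin{align}
|C_1| \geq d_1^{C_1} + d_2^{C_1}.
\end{align}

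The first step is to extract the defining dependence of the circuit: there exist scalars $\{\lambda_a : a \in C_1\}$, \emph{all non-zero}, with $\sum_{a \in C_1} \lambda_a\, x_a \otimes y_a = 0$. Non-vanishing of every $\lambda_a$ is forced by the minimality property of circuits (if some $\lambda_a$ were zero, the remaining $\lambda_b$'s would witness a linear dependence on a proper subset). Next I would translate this into a matrix identity. Setting $k = |C_1|$, $r_1 = d_1^{C_1}$, $r_2 = d_2^{C_1}$, choose bases of $\spn\{x_a : a \in C_1\}$ and $\spn\{y_a : a \in C_1\}$, and let $X \in \field^{r_1 \times k}$ and $Y \in \field^{r_2 \times k}$ be the matrices whose $a$-th columns express $x_a$ and $y_a$ in these bases. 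Both $X$ and $Y$ have full row rank by construction, and under the standard identification $\V_1 \otimes \V_2 \iso \Lin(\V_2^*, \V_1)$, the dependence becomes the matrix equation
\begin{align}
X \Lambda Y^\t = 0, \quad \text{where } \Lambda = \op{diag}(\lambda_a) \in \field^{k \times k} \text{ is invertible.}
\end{align}

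From $X (\Lambda Y^\t) = 0$, the column space of $\Lambda Y^\t$ is contained in $\ker X$. Since $\Lambda$ is invertible and $Y$ has rank $r_2$, the column space of $\Lambda Y^\t$ has dimension $r_2$, while $\ker X$ has dimension $k - r_1$. Therefore $r_2 \leq k - r_1$, i.e., $k \geq r_1 + r_2$, which finishes the claim. The only conceptual subtlety is the use of invertibility of $\Lambda$, which depends crucially on the circuit structure: without minimality one could lose this inequality. Everything else is routine linear algebra, so I do not expect any serious obstacle here — the real difficulty of Theorem~\ref{bipartite} is concentrated in the second claim, which must extend this base case along the ears.
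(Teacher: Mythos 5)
Your proof is correct, and it takes a genuinely different route from the paper's. Both arguments exploit the circuit structure to reduce the claim to $|C_1| \geq d_1^{C_1} + d_2^{C_1}$, but they establish this inequality by different means. You extract the (all-nonzero) circuit relation, pass to coordinates to write it as a matrix identity $X \Lambda Y^\t = 0$ with $X$ and $Y$ of full row ranks $d_1^{C_1}$, $d_2^{C_1}$ and $\Lambda$ invertible diagonal, and then apply the standard rank-nullity estimate: the column space of $\Lambda Y^\t$ sits inside $\ker X$, so $d_2^{C_1} \leq |C_1| - d_1^{C_1}$. This is essentially an invocation of Sylvester's rank inequality, which is pleasing since the whole theorem is a tensor generalization of that inequality. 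The paper instead picks a basis $\{x_a : a \in [s]\}$ of $\spn\{x_a : a \in C_1\}$, shows via an explicit pair of separating functionals $f \otimes g$ that each $y_b$ with $b \in [s]$ must already lie in $\spn\{y_a : a \in C_1 \setminus [s]\}$ (else $x_b \otimes y_b$ would be isolated from the rest of the circuit), and concludes $d_2^{C_1} \leq |C_1| - s$. The paper's functional-separation technique has the advantage of being the same tool reused almost verbatim in the proof of the harder Claim~\ref{hard_claim}, whereas your matrix argument is more self-contained and arguably cleaner as a standalone proof of the base case, but would not transfer as directly to the inductive ear step. One small point worth making explicit in your write-up: $d^{E_1} = |C_1| - 1$ does rely on $C_1$ being a circuit (minimal dependent set), which you do note but only in passing.
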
\end{samepage}
\begin{claim}\label{hard_claim}
For each $p \in \{2,\dots, t\}$,
\begin{align}
\abs{E_p \setminus E_{p-1}}-1 \geq d_1^{E_p}-d_1^{E_{p-1}}+d_2^{E_p}-d_2^{E_{p-1}}.
\end{align}
\end{claim}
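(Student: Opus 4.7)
The plan is to combine the circuit relation on $C_p$ with Proposition~\ref{subsystem_connected_lemma} to bound the joint growth of $d_1$ and $d_2$. Set $F = C_p \setminus E_{p-1}$ (the new indices), $k = |F|$, and $C_p^{\mathrm{old}} = C_p \cap E_{p-1}$ (nonempty by property~1). Write $U_i$ for the old span in $\V_i$ with quotient map $\pi_i$, and put $\Delta d_i = d_i^{E_p} - d_i^{E_{p-1}}$. The claim reduces to $\Delta d_1 + \Delta d_2 \leq k - 1$.

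Step one is to apply $\pi_1 \otimes \I$ to the circuit relation $\sum_{a \in C_p} \mu_a\, x_a \otimes y_a = 0$ (all $\mu_a \neq 0$). The terms with $a \in C_p^{\mathrm{old}}$ vanish since $x_a \in U_1$, leaving $\sum_{a \in F} \mu_a \pi_1(x_a) \otimes y_a = 0$ in $(\V_1/U_1) \otimes \V_2$. Expanding the $\pi_1(x_a)$'s in a basis of their $\Delta d_1$-dimensional span and reading off coefficients produces $\Delta d_1$ linearly independent dependencies among $\{y_a : a \in F\}$, hence $\dim\spn\{y_a : a \in F\} \leq k - \Delta d_1$. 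This alone only gives $\Delta d_1 + \Delta d_2 \leq k$.

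Step two extracts the missing unit of slack from connectivity. A circuit is a connected matroid (a split would contradict minimality of the dependence), so by Proposition~\ref{subsystem_connected_lemma} the multiset $\{y_a : a \in C_p\}$ is connected in $\V_2$. Since $F$ is a proper nonempty subset of $C_p$, it cannot separate $\{y_a : a \in C_p\}$, meaning $\spn\{y_a : a \in F\} \cap \spn\{y_a : a \in C_p^{\mathrm{old}}\} \neq \{0\}$. Because $\spn\{y_a : a \in C_p^{\mathrm{old}}\} \subseteq U_2$, the modular identity $\dim\spn\{y_a : a \in F\} = \Delta d_2 + \dim(\spn\{y_a : a \in F\} \cap U_2)$ forces $\dim\spn\{y_a : a \in F\} \geq \Delta d_2 + 1$. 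Combining with step one yields $\Delta d_1 + \Delta d_2 \leq k - 1$, as desired.

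The main obstacle is not the calculation but noticing where the sharp $-1$ comes from: the dualized circuit relation on its own only delivers $\Delta d_1 + \Delta d_2 \leq k$. The key observation is that the circuit's matroid-connectivity in the tensor space, together with Proposition~\ref{subsystem_connected_lemma}, transfers to connectivity of the $y_a$'s, and hence forces an unavoidable overlap between $\spn\{y_a : a \in F\}$ and $U_2$; that one-dimensional overlap is precisely the missing unit of slack.
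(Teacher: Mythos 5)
Your proof is correct, and the overall strategy matches the paper's: first bound $\dim\spn\{y_a : a \in E_p\setminus E_{p-1}\}$ from above by $\lvert E_p\setminus E_{p-1}\rvert - \Delta d_1$, then use Proposition~\ref{subsystem_connected_lemma} together with the fact that $F$ cannot separate $\{y_a : a\in C_p\}$ to win the extra $-1$. Where you genuinely differ is the first step. The paper extends a basis of $\spn\{x_a : a\in E_{p-1}\}$ by $\Delta d_1$ new $x$'s and runs a dual-functional contradiction argument (as in Claim~\ref{simple_claim}) to show that each corresponding $y_b$ must lie in the span of the remaining new $y$'s; you instead apply $\pi_1\otimes\I$ directly to the unique (up to scale) dependence relation of the circuit $C_p$, kill the terms indexed by $C_p\cap E_{p-1}$, and read off $\Delta d_1$ linearly independent dependencies among $\{y_a : a\in F\}$. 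These two arguments are essentially dual --- the paper works with functionals, you with quotients --- but your version makes the role of the circuit's defining relation explicit and avoids the contradiction framing. Your step two (the modular dimension identity plus the non-separating observation) is a compact repackaging of the paper's submodularity-plus-connectedness chain; both are fine.
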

Before proving these claims, let us first use them to complete the proof.
Note that
\begin{align}
d^{E_2}&=d^{E_1}+\abs{E_2 \setminus E_1}-1\\
	&\geq d^{E_1}_1+d^{E_1}_2-1+\abs{E_2 \setminus E_1}-1\\
	&\geq d^{E_2}_1+d^{E_2}_2-1.
\end{align}
The first line is a property of the ear decomposition, the second line follows from Claim~\ref{simple_claim}, and the third line follows from Claim~\ref{hard_claim}. So Claim~\ref{simple_claim} holds with $E_1$ replaced with $E_2$. Repeating this process inductively gives $d^{[n]} \geq d^{[n]}_1+d^{[n]}_2-1$, which is what we wanted to prove. This completes the proof, modulo proving the claims.
\begin{proof}[Proof of Claim~\ref{simple_claim}]
\renewcommand\qedsymbol{$\triangle$}
By permuting $[n]$, we may assume that $C_1=[q]$ for some $q \in [n]$, and that $\{ x_a : a \in [d_1^{[q]}]\}$ is a basis for $\spn\{x_a : a \in [q]\}$. Let $s=d_1^{[q]}$.

Suppose that there exists $b \in [s]$ such that $y_b \notin \spn \{y_a : a \in [q]\setminus [s]\}$. Let $f \in \V_1^*$, $g \in \V_2^*$ be linear functionals such that $f(x_b)=g(y_b)=1$, $f(x_a)=0$ for all $a \in [s] \setminus \{b\}$, and $g(y_a)=0$ for all $a \in [q]\setminus [s]$. So
\begin{align}
(f \otimes g)(x_a \otimes y_a)=
\begin{cases}
1, & a=b\\
0, & a \neq b
\end{cases}.
\end{align}
It follows that $x_b \otimes y_b \notin \spn \{x_a \otimes y_a : a \in [q] \setminus \{b\}\}$, contradicting the fact that $C_1$ indexes a circuit. So $\{y_a : a \in [s]\} \subseteq \spn\{y_a : a\in [q]\setminus [s]\}$, which implies
\begin{align}
d_2^{C_1} &\leq q-s\\
		&=d^{C_1}+1-d_1^{C_1},
\end{align}
completing the proof.
\end{proof}
Now we prove Claim~\ref{hard_claim}.
\begin{proof}[Proof of Claim~\ref{hard_claim}]
\renewcommand\qedsymbol{$\triangle$}
Let $B\subseteq E_{p-1}$ be such that $\{x_a : a \in B\}$ is a basis for ${\spn \{x_a : a \in E_{p-1}\}}$. By permuting $[n]$, we may assume that $E_p \setminus E_{p-1}=[q]$ for some $q \in [n]$, and that ${B \cup \{x_a : a \in [s]\}}$ is a basis for $\spn\{x_a : a \in E_p\}$, where $s=d^{E_p}-d^{E_{p-1}}$. If there exists $b \in [s]$ for which $y_b \notin \spn \{y_a : a \in [q]\setminus[s]\}$, then, as in the proof of Claim~\ref{simple_claim},
\begin{align}
x_b \otimes y_b \notin \spn \{x_a \otimes y_a : a \in E_p \setminus \{b\}\}.
\end{align}
But this contradicts connectedness of $E$, a contradiction. It follows that $d_2^{E_p \setminus E_{p-1}}\leq q-s$, so
\begin{align}
d_2^{E_p}-d_2^{E_{p-1}}&\leq d_2^{C_p}-d_2^{C_p \cap E_{p-1}}\\
					&\leq d_2^{C_p \setminus (C_p \cap E_{p-1})}-1\\
					&= d_2^{E_p \setminus E_{p-1}}-1\\
					&\leq q-s-1\\
					&=\abs{E_p \setminus E_{p-1}}-(d_1^{E_p}-d_1^{E_{p-1}})-1.
\end{align}
The first line is easy to verify (in matroid-theoretic terms, this is submodularity of the rank function). The second line follows from the fact that $\{y_a : a \in C_p\}$ is connected. The third line is obvious, the fourth line we proved above, and the fifth line follows from our definitions. This completes the proof.
\end{proof}
The proofs of Claims~\ref{simple_claim} and~\ref{hard_claim} complete the proof of the theorem.
\end{proof}

\section{Using our splitting theorem to generalize Kruskal's theorem}\label{k-gen_proof}
In this section we use our splitting theorem (Theorem~\ref{conjecture}) to prove our generalization of Kruskal's theorem (Theorem~\ref{k-gen}). We then introduce a reshaped version of Theorem~\ref{k-gen}, which has many more degrees of freedom than the standard reshaping of Kruskal's theorem.

To prove Theorem~\ref{k-gen}, we first observe the following useful corollary to our splitting theorem.
\begin{cor}\label{original_conjecture}
Let $n \geq 2$ and $m\geq 2$ be integers, let $\V=\V_1\otimes \dots \otimes \V_m$ be a vector space over a field $\field$, let
\begin{align}
E=\{x_a: a \in[n] \} \subseteq \pro{\V_1:\dots : \V_m}
\end{align}
be a multiset of product tensors, and for each $j \in [m]$, let
\begin{align}
d_j=\dim\spn \{ x_{a,j}: a \in [n]\}.
\end{align}
If $n \leq \sum_{j=1}^m (d_j-1)+1$, then $E$ splits.
\end{cor}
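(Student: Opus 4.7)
The corollary is an almost immediate consequence of Theorem~\ref{conjecture}, modulo a simple boundary case. The plan is to compare the two inequalities, notice that the corollary's hypothesis is one weaker than that of Theorem~\ref{conjecture}, and dispatch the resulting edge case directly.

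First, I would observe that since $E$ is a multiset of size $n$, we always have $\dim\spn(E) \leq n$. Combined with the hypothesis $n \leq \sum_{j=1}^m (d_j-1)+1$, this gives
\begin{align}
\dim\spn(E) \;\leq\; \sum_{j=1}^m (d_j-1)+1.
\end{align}
I would then split into two cases. In the first case, $\dim\spn(E) \leq \sum_{j=1}^m(d_j-1)$, and Theorem~\ref{conjecture} applies directly to yield that $E$ splits.

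In the remaining case, equality holds throughout: $\dim\spn(E) = n = \sum_{j=1}^m(d_j-1)+1$. Since the span of an $n$-element multiset achieves dimension $n$ only when the multiset is linearly independent (and in particular has no repeats), $E$ is linearly independent. Then any nonempty proper subset $S \subsetneq E$ — for instance $S = \{x_1\}$, which is valid because $n \geq 2$ — satisfies $\spn(S) \cap \spn(S^c) = \{0\}$, so by the characterization of separators recalled in Section~\ref{mp}, $S$ separates $E$.

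\textbf{Main obstacle.} There is essentially no obstacle; the only subtlety is not overlooking the boundary case where $\dim\spn(E)$ exceeds $\sum_{j=1}^m(d_j-1)$ by one, which forces $E$ to be linearly independent and hence to split trivially.
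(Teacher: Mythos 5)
Your proof is correct and is essentially the paper's own argument, just with the two cases presented in the opposite order: the paper first dispatches the linearly independent case (which "obviously splits") and then applies Theorem~\ref{conjecture} using $\dim\spn(E) \leq n-1$, while you first apply the theorem and then handle the boundary case where $\dim\spn(E) = n$ forces linear independence. Both proofs hinge on the same two observations, so there is no meaningful difference in approach.
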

\begin{proof}
If $E$ is linearly independent, then it obviously splits. Otherwise,
\begin{align}
{\dim \spn (E) \leq n-1},
\end{align}
and the result follows immediately from our splitting theorem.
\end{proof}

Now we use this corollary to prove our generalization of Kruskal's theorem.

\begin{proof}[Proof of Theorem~\ref{k-gen}]
Let $x_a=x_{a,1}\otimes \dots \otimes x_{a,m}$ for each $a \in [n]$, and suppose that ${\sum_{a \in [n]} x_a = \sum_{a \in [r]} y_a}$ for some non-negative integer $r \leq n$ and multiset of product tensors $\{y_a: a \in [r]\}\subseteq \pro{\V_1:\dots : \V_m}$. For notational convenience, for each $a \in [r]$ let $x_{n+a}=-y_a$, so that $\sum_{a\in [n+r]} x_a =0$. Let $T_1 \sqcup \dots \sqcup T_t=[n+r]$ be the index sets of the connected components of $\{x_a : a \in [n+r]\}$. Since $\sum_{a\in [n+r]} x_a =0$, it follows that $\sum_{a \in T_p} x_a =0$ for all $p \in [t]$, so $\abs{T_p} \geq 2$ for all $p \in [t]$.

For each $p \in [t]$, if
\begin{align}\label{k-gen_proof_inequality}
\bigabs{T_p \cap [n]} \geq \bigabs{T_p \cap [n+r]\setminus [n]},
\end{align}
then it must hold that
\begin{align}\label{k-gen_proof_equality}
\bigabs{T_p \cap [n]} = \bigabs{T_p \cap [n+r]\setminus [n]}=1,
\end{align}
otherwise $\{x_a : a \in T_p\}$ would split by Corollary~\ref{original_conjecture}, a contradiction. Since $r \leq n$ and the inequality~\eqref{k-gen_proof_inequality} can never be strict, it follows that $r=n$ and~\eqref{k-gen_proof_equality} holds for all $p \in [t]$. This completes the proof.
%
%
%
%
\end{proof}

For $m \geq 4$, both Kruskal's theorem and our Theorem~\ref{k-gen} can be ``reshaped'' by regarding multiple subsystems as a single subsystem, to give potentially stronger uniqueness criteria. It is worth noting that the reshaped version of Theorem~\ref{k-gen} has quite a different flavour from the reshaped version of Kruskal's theorem; in particular, there are many more degrees of freedom to choose from. We omit the proof of the following reshaped version of Theorem~\ref{k-gen}, because it is similar to the proof of Theorem~\ref{k-gen}.

\begin{theorem}[Reshaped generalization of Kruskal's theorem]\label{reshaped_k-gen}
Let $n \geq 2$ and $m \geq 3$ be integers, let $\V=\V_1\otimes \dots \otimes \V_m$ be a vector space over a field $\field$, and let
\begin{align}
\{x_a: a \in [n]\} \subseteq \pro{\V_1 : \dots : \V_m}
\end{align}
be a multiset of product tensors. For each $S \subseteq [n]$ and $J \subseteq [m]$, let
\begin{align}
d_J^S=\dim\spn\Big\{\bigotimes_{j \in J} x_{a,j} : a \in S\Big\}.
\end{align}
If for every subset $S \subseteq [n]$ with $2 \leq \abs{S} \leq n$ there exists a partition $J_1 \sqcup \dots \sqcup J_t =[m]$ (which may depend on $S$) such that $2 \abs{S} \leq \sum_{i \in [t]} (d_{J_i}^S-1)+1$, then $\sum_{a \in [n]} x_a$ constitutes a unique tensor rank decomposition.
\end{theorem}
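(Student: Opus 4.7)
The plan is to mirror the proof of Theorem~\ref{k-gen} nearly verbatim, with the single change that Corollary~\ref{original_conjecture} is applied in a reshaped tensor product chosen to suit each connected component. Concretely, I would suppose $\sum_{a \in [n]} x_a = \sum_{a \in [r]} y_a$ for some non-negative integer $r \leq n$ and multiset of product tensors $\{y_a : a \in [r]\} \subseteq \pro{\V_1 : \dots : \V_m}$, set $x_{n+a} = -y_a$ so that $\sum_{a \in [n+r]} x_a = 0$, and let $T_1 \sqcup \dots \sqcup T_t = [n+r]$ be the index sets of the connected components of $\{x_a : a \in [n+r]\}$. Each $T_p$ satisfies $\sum_{a \in T_p} x_a = 0$ and hence $\abs{T_p} \geq 2$.

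For each $p$ with $\bigabs{T_p \cap [n]} \geq \bigabs{T_p \cap ([n+r] \setminus [n])}$ I would set $S = T_p \cap [n]$, so that $\abs{T_p} \leq 2 \abs{S}$. If $\abs{S} \geq 2$, the hypothesis of the theorem (applied to this $S$) supplies a partition $J_1 \sqcup \dots \sqcup J_s = [m]$ with $2 \abs{S} \leq \sum_{i \in [s]} (d_{J_i}^S - 1) + 1$. I would then view $\V_1 \otimes \dots \otimes \V_m$ as $\W_1 \otimes \dots \otimes \W_s$ via $\W_i = \bigotimes_{j \in J_i} \V_j$; under this identification each $x_a$ is a product tensor whose $i$-th factor is $\bigotimes_{j \in J_i} x_{a,j}$, and the span of these $i$-th factors over $a \in T_p$ has dimension $d_{J_i}^{T_p}$. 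Combining $\abs{T_p} \leq 2 \abs{S}$ with the chosen partition's inequality and the monotonicity $d_{J_i}^{T_p} \geq d_{J_i}^S$ yields $\abs{T_p} \leq \sum_{i \in [s]} (d_{J_i}^{T_p} - 1) + 1$, so Corollary~\ref{original_conjecture} applied in the reshaped tensor product forces $\{x_a : a \in T_p\}$ to split, contradicting that $T_p$ indexes a connected component. The only remaining possibility is $\abs{S} = 1$, which together with $\abs{T_p} \geq 2$ forces $\bigabs{T_p \cap [n]} = \bigabs{T_p \cap ([n+r] \setminus [n])} = 1$.

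The rest is the counting argument from the proof of Theorem~\ref{k-gen}: strict inequality $\bigabs{T_p \cap [n]} > \bigabs{T_p \cap ([n+r]\setminus[n])}$ cannot occur for any $p$ (it would force $\abs{T_p} \leq 1$), and summing $n = \sum_p \bigabs{T_p \cap [n]}$ against $r = \sum_p \bigabs{T_p \cap ([n+r]\setminus[n])}$ with $r \leq n$ forces every $T_p$ to have both intersection sizes equal to $1$, yielding $r = n$ and pairing each $x_a$ with a unique $y_{\sigma(a)}$ equal to it.

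The main subtlety lies in the reshaping step: one must observe that connectedness of $\{x_a : a \in T_p\}$ is a matroid-theoretic property of a multiset of vectors in the ambient vector space $\V_1 \otimes \dots \otimes \V_m$, and is therefore intrinsic and independent of how we identify that space as a further tensor product. This is immediate from the matroid definition, but it is the one point where the reshaped proof departs notationally from that of Theorem~\ref{k-gen} and the place where the freedom to choose a distinct partition $J_1 \sqcup \dots \sqcup J_s$ for each subset $S$ actively enters the argument.
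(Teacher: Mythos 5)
Your proof is correct and is precisely the argument the paper has in mind when it says the proof of Theorem~\ref{reshaped_k-gen} is ``omitted because it is similar to the proof of Theorem~\ref{k-gen}'': you follow that proof verbatim, with the single substitution of applying Corollary~\ref{original_conjecture} in a regrouping of the tensor factors chosen per component, and you correctly identify the one subtlety (connectedness of $\{x_a : a \in T_p\}$ is a property of the multiset of vectors in $\V$, independent of how $\V$ is factored as a tensor product) that makes this substitution legitimate.
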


It is instructive to compare Theorem~\ref{reshaped_k-gen} to the standard reshaping of Kruskal's theorem:
\begin{theorem}[Reshaped Kruskal's theorem]\label{reshaped_kruskal}
Let $n \geq 2$ and $m \geq 3$ be integers, let ${\V=\V_1\otimes \cdots \otimes \V_m}$ be a vector space over a field $\field$, and let
\begin{align}
\{x_a: a \in [n] \}\subseteq \pro{\V_1 : \dots : \V_m}
\end{align}
be a multiset of product tensors. For each $J \subseteq [m]$, let
\begin{align}
k_J = \krank( \bigotimes_{j \in J} x_{a,j} : a \in [n]).
\end{align}
If there exists a partition of $[m]$ into three disjoint subsets $J \sqcup K \sqcup L=[m]$ such that ${2n \leq k_J+k_K+k_L-2},$ then $\sum_{a \in [n]} x_a$ constitutes a unique tensor rank decomposition.
\end{theorem}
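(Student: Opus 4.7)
The plan is a direct reduction to the three-subsystem case of Kruskal's theorem (Theorem~\ref{kruskal}) using the given partition $J \sqcup K \sqcup L = [m]$. First I would set $\V_I := \bigotimes_{j \in I} \V_j$ for $I \in \{J,K,L\}$ and invoke the canonical isomorphism $\V_1 \otimes \cdots \otimes \V_m \cong \V_J \otimes \V_K \otimes \V_L$. Under this identification, each original product tensor $x_a = x_{a,1} \otimes \cdots \otimes x_{a,m}$ sends to the three-fold product tensor $\tilde{x}_a := x_{a,J} \otimes x_{a,K} \otimes x_{a,L}$ with $x_{a,I} := \bigotimes_{j \in I} x_{a,j}$, and by definition the Kruskal ranks of $\{\tilde{x}_a : a \in [n]\}$ along the three coarse subsystems are exactly $k_J$, $k_K$, and $k_L$.

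With this setup in hand, the hypothesis $2n \leq k_J + k_K + k_L - 2$ is precisely $2n \leq (k_J - 1) + (k_K - 1) + (k_L - 1) + 1$, which is the inequality appearing in Theorem~\ref{kruskal} for $m=3$. Applying Kruskal's theorem in the reshaped space then yields that $\sum_a \tilde{x}_a$ constitutes a unique tensor rank decomposition in $\V_J \otimes \V_K \otimes \V_L$. To transfer this back to the fine space, I would take an arbitrary alternative decomposition $\sum_{a \in [n]} x_a = \sum_{a \in [r]} y_a$ with $r \leq n$ and $y_a \in \pro{\V_1 : \dots : \V_m}$, and observe that every fine product tensor is automatically a three-fold product tensor in $\pro{\V_J : \V_K : \V_L}$. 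Hence the $y_a$ reshape to $\tilde{y}_a \in \pro{\V_J : \V_K : \V_L}$, and reshaped uniqueness forces $r = n$ and $\{\tilde{x}_a\} = \{\tilde{y}_a\}$, hence $\{x_a\} = \{y_a\}$ as multisets in $\V_1 \otimes \cdots \otimes \V_m$.

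No serious obstacle arises; the argument is essentially a one-line reduction and is strictly simpler than the proof of Theorem~\ref{k-gen}, which the author already carried out via the splitting theorem. The only point worth flagging is the asymmetric inclusion $\pro{\V_1 : \dots : \V_m} \subseteq \pro{\V_J : \V_K : \V_L}$: a fine product tensor is automatically a coarse product tensor, but not conversely. This asymmetry is precisely what makes reshaped uniqueness a logically stronger statement than fine uniqueness (and hence what makes the reshaped criterion a potential genuine strengthening of the unreshaped one), while still supplying the implication we need.
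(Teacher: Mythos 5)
Your argument is correct, and it takes a genuinely different (and more elementary) route than the paper does. The paper treats Theorem~\ref{reshaped_kruskal} as an immediate corollary of its reshaped generalization (Theorem~\ref{reshaped_k-gen}), which in turn rests on the splitting theorem; the one-line remark ``Theorem~\ref{reshaped_kruskal} clearly follows from our Theorem~\ref{reshaped_k-gen}'' is the entirety of its stated derivation. You instead perform the classical reduction: regroup the $m$ subsystems into three blocks via the canonical isomorphism $\V_1\otimes\cdots\otimes\V_m \cong \V_J\otimes\V_K\otimes\V_L$, observe that the reshaped $k$-ranks of $\{\tilde x_a\}$ are exactly $k_J,k_K,k_L$, and apply the $m=3$ case of Theorem~\ref{kruskal} directly. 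Your ``transfer back'' step is also handled properly, via the point you flag: the inclusion $\pro{\V_1:\cdots:\V_m}\subseteq\pro{\V_J:\V_K:\V_L}$ is one-sided, so any competing fine decomposition $\sum y_a$ remains a legitimate coarse decomposition, and coarse uniqueness pulls back through the linear isomorphism to give $\{x_a\}=\{y_a\}$ as multisets. One small observation worth retaining: if some block of the tripartition were empty, Kruskal's $m=3$ theorem would not apply, but this cannot occur under the hypothesis, since an empty block has $k$-rank $1$ (for $n\ge 2$), which would force $2n\le k_J+k_K-1\le 2n-1$. The upshot is that your proof establishes the classical reshaped Kruskal theorem without invoking any of the paper's new machinery, whereas the paper's route emphasizes that the classical statement is subsumed by Theorem~\ref{reshaped_k-gen} — which is precisely the comparison the paper is trying to make in that passage. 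Both proofs are sound; yours is self-contained and simpler, while the paper's exhibits the logical containment that motivates its stronger result.
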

Theorem~\ref{reshaped_kruskal} clearly follows from our Theorem~\ref{reshaped_k-gen}. In Theorem~\ref{reshaped_kruskal}, one could of course consider more general partitions of $[m]$ into more than three subsets, but since the k-rank satisfies $k_{J \cup K} \geq \min\{n,k_{J}+k_K-1\}$ for any disjoint subsets $J,K \subseteq [m]$ (See Lemma~1 in~\cite{sidiropoulos2000uniqueness}), it suffices to consider tripartitions $J \sqcup K \sqcup L=[m]$. In contrast, it is not clear that one can restrict to tripartitions in Theorem~\ref{reshaped_k-gen}. There is another major difference between these two theorems: In Theorem~\ref{reshaped_kruskal}, one chooses a single partition of $[m]$, whereas in Theorem~\ref{reshaped_k-gen}, one is free to choose a different partition of $[m]$ for every $S$.

We remark that many other statements in this work (for example, the splitting theorem itself) can be reshaped similarly to Theorem~\ref{reshaped_k-gen}. We do not explicitly state these reshapings.

\section{The inequality appearing in our splitting theorem cannot be weakened}\label{conjecture_kruskal}

In this section,
we find a connected multiset of product tensors $E=\{x_a : a \in [n]\}$ that satisfies ${\dim\spn(E) = \sum_{j=1}^m (d_j-1)+1}$. In fact, we prove that this multiset of product tensors forms a circuit, which is stronger than being connected. This proves that the bound in Corollary~\ref{linincor}, and the inequality $\dim\spn(E) \leq \sum_{j=1}^m (d_j-1)$ appearing in Theorem~\ref{conjecture}, cannot be weakened. The example we use is Derksen's~\cite{DERKSEN2013708}, which he used to prove that the inequality appearing in Kruskal's theorem cannot be weakened.
\begin{fact}\label{derksen}
For any field $\field$ with $\setft{Char}(\field)=0$, and positive integers $d_1,\dots, d_m$ with ${n-1=\sum_{j=1}^m (d_j-1)+1}$, there exist vector spaces $\V_1,\dots,\V_m$ over $\field$ and a multiset of product tensors $\{x_a:a \in [n]\}\subseteq \pro{\V_1 : \cdots : \V_m}$ that forms a circuit, and satisfies
\begin{align}
\dim\spn\{x_{a,j} : a \in [n]\}\geq d_j
\end{align}
for all $a \in [n]$.
\end{fact}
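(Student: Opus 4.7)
The plan is to exhibit Derksen's moment-curve construction~\cite{DERKSEN2013708} explicitly. First, I would take $\V_j=\field^{d_j}$ for each $j\in[m]$ and, using that $\Char(\field)=0$ forces $\field$ to be infinite, pick $n$ distinct scalars $t_1,\dots,t_n\in\field$. For each $a\in[n]$ and $j\in[m]$, define
\[
x_{a,j}=\bigl(1,\,t_a,\,t_a^2,\,\dots,\,t_a^{d_j-1}\bigr)\in\V_j,
\]
and set $x_a=x_{a,1}\otimes\cdots\otimes x_{a,m}$. Since $n=\sum_{j}(d_j-1)+2\geq d_j+1$ for every $j$, the Vandermonde determinant immediately yields $\dim\spn\{x_{a,j}:a\in[n]\}=d_j$, giving the required subsystem-rank condition.

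Next, I would locate a small subspace of $\V_1\otimes\cdots\otimes\V_m$ containing every $x_a$. For each $k\in\{0,1,\dots,n-2\}$ (noting that $\sum_{j}(d_j-1)=n-2$), let $v_k$ denote the tensor whose entry at multi-index $(i_1,\dots,i_m)$ with $0\leq i_j\leq d_j-1$ equals $1$ if $i_1+\cdots+i_m=k$ and $0$ otherwise. The $n-1$ tensors $v_0,\dots,v_{n-2}$ have pairwise disjoint supports and are therefore linearly independent, spanning an $(n-1)$-dimensional subspace $\W$. Expanding the tensor product directly gives
\[
x_a=\sum_{k=0}^{n-2}t_a^{k}\,v_k,
\]
so each $x_a$ lies in $\W$, and in the basis $(v_0,\dots,v_{n-2})$ its coordinate vector is the Vandermonde row $(1,t_a,t_a^2,\dots,t_a^{n-2})$.

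Finally I would conclude that $\{x_a:a\in[n]\}$ is a circuit: the $n$ Vandermonde rows lie in the $(n-1)$-dimensional space $\W$ and so are linearly dependent, while any $n-1$ of them are linearly independent because the corresponding $(n-1)\times(n-1)$ Vandermonde minor is a nonzero product of differences $t_a-t_b$. Hence every proper subset of $\{x_a\}$ is linearly independent while the full multiset is dependent, which is precisely the definition of a circuit, and in particular $\dim\spn(E)=n-1=\sum_{j=1}^{m}(d_j-1)+1$, so the inequality of Theorem~\ref{conjecture} cannot be weakened. No step looks like a serious obstacle; the construction is essentially standard once one has the moment-curve idea, and the only point meriting real care is the linear independence of the $v_k$'s, which is immediate from the disjointness of the index sets $\{(i_1,\dots,i_m):\sum_j i_j=k\}$ as $k$ varies.
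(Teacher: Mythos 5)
Your proof is correct, and it takes a genuinely different route from the paper's. The paper invokes Derksen's Theorem 2 as a black box to obtain $\tilde n\leq n$ product tensors with prescribed k-ranks summing to zero, then uses Corollary~\ref{hakyegen0} twice: once to force $\tilde n=n$ (otherwise the set would be linearly independent, contradicting the zero sum), and once to show every $(n-1)$-element subset is independent, hence the full multiset is a circuit. You instead unwind Derksen's construction explicitly: take $x_{a,j}=(1,t_a,\dots,t_a^{d_j-1})$ along the moment curve, observe that each $x_a$ lies in the $(n-1)$-dimensional subspace $\W=\spn\{v_0,\dots,v_{n-2}\}$ (where $v_k$ collects the multi-indices with $i_1+\cdots+i_m=k$), and note that in this basis the coordinate vectors of $x_1,\dots,x_n$ are Vandermonde rows; since $n$ Vandermonde rows in dimension $n-1$ are dependent while any $n-1$ of them are independent, the circuit property follows directly. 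Your identity $x_a=\sum_{k=0}^{n-2}t_a^{k}v_k$ is correct because the entry of $x_a$ at $(i_1,\dots,i_m)$ is $t_a^{i_1+\cdots+i_m}$, and the bound $n=\sum_j(d_j-1)+2\geq d_j+1$ does guarantee $\dim\spn\{x_{a,j}\}=d_j$ by Vandermonde. What the paper's route buys is brevity and an approach that works for whatever examples Derksen's theorem provides without unpacking them; what your route buys is a fully self-contained, elementary proof that avoids citing both Derksen's Theorem 2 and Corollary~\ref{hakyegen0}, and that also makes manifest the symmetric case remarked on in the paper (when $d_1=\cdots=d_m$ the $x_{a,j}$ do not depend on $j$, so $x_a$ is a symmetric power).
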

We note that if $d_1=\dots = d_m$, then the multiset of product tensors $\{x_a : a \in [n]\}$ can be taken to be symmetric in the sense introduced in Section~\ref{mp} (this is obvious from Derksen's construction~\cite{DERKSEN2013708}). As a result, our splitting theorem is also sharp for symmetric product tensors. We use this fact in Sections~\ref{waring_rank} and~\ref{symmetric_non-rank} to prove optimality of our results on symmetric decompositions. We remark that the assumption $\setft{Char}(\field)=0$ can be weakened, see~\cite{DERKSEN2013708}.
\begin{proof}[Proof of Fact~\ref{derksen}]
By Theorem 2 of \cite{DERKSEN2013708}, there exist vector spaces $\V_1,\dots, \V_m$ over $\field$, a positive integer $\tilde{n} \leq n$, and product tensors $\{x_a : a \in [\tilde{n}]\}\subseteq \pro{\V_1 : \cdots : \V_m}$ with k-ranks $d_j=\krank(x_{1,j},\dots, x_{\tilde{n},j})$ such that ${\sum_{a\in [\tilde{n}]} x_a=0}$. If $\tilde{n} < n$, then ${\tilde{n} \leq \sum_{j=1}^m (d_j-1)+1}$, which implies $\{x_a:a\in[\tilde{n}]\}$ is linearly independent by Corollary~\ref{hakyegen0} (or Proposition~3.1 in~{\cite{1751-8121-48-4-045303}}). But this contradicts $\sum_{a \in [\tilde{n}]} x_a=0$, so $\tilde{n}=n$. The equality $n=\sum_{j=1}^m (d_j-1)+2$ implies that $d_j \leq n-1$ for all $j \in [m]$. It follows that for any subset $S \subseteq [n]$ of size $\abs{S}=n-1$, it holds that ${\krank(x_{a,j} : a \in S) \geq d_j}$. Since $n-1= \sum_{j=1}^m (d_j-1)+1$, then by Corollary~\ref{hakyegen0},  ${\{x_a:a\in S\}}$ is linearly independent. It follows that $\{x_a : a \in [n]\}$ is a circuit.
\end{proof}

\section{Interpolating between our generalization of Kruskal's theorem and an offshoot of our splitting theorem}\label{interpolate}
For the entirety of this section, we fix non-negative integers $n\geq 2$ and $m\geq 2$, a vector space $\V=\V_1\otimes \dots \otimes \V_m$ over a field $\field$, and a multiset of product tensors ${\{x_a: a \in[n] \} \subseteq \pro{\V_1:\dots : \V_m}}$. For each subset $S\subseteq [n]$ and index $j \in [m]$, we define
\begin{align}
d_j^S=\dim\spn \{ x_{a,j}: a \in S\},
\end{align}
and use the shorthand $d_j= d_j^{[n]}$ for all $j \in [m]$.

As a consequence of our splitting theorem, if $n \leq \sum_{j=1}^m (d_j -1)+1$, then ${\{x_a: a \in [n]\}}$ splits (Corollary~\ref{original_conjecture}). Our generalization of Kruskal's theorem states that if $2 \abs{S} \leq \sum_{j=1}^m(d_j^S-1)+1$ for every subset $S \subseteq [n]$ with $2 \leq \abs{S} \leq n$, then $\sum_{a \in [n]} x_a$ constitutes a unique tensor rank decomposition. It is natural to ask what happens when other, similar inequalities hold. In particular, suppose that
\begin{align}\label{beyond_inequality}
{\abs{S}+\R(\abs{S}) \leq \sum_{j=1}^m (d_j^S -1)+1}
\end{align}
for all $S \subseteq [n]$ with $s+1 \leq \abs{S} \leq n$, for some $s \in [n-1]$ and function $\mathcal{R}: [n]\setminus [s] \rightarrow \mathbb{Z}$. What can be said about the tensors $v \in \spn \{x_a : a \in [n]\}$?

In this section, we use our splitting theorem to answer this question for choices of $s$ and $\R$ that produce useful results on tensor decompositions. In Section~\ref{lowrank} we prove uniqueness results for low-rank tensors in $\spn\{x_a : a \in [n]\}$. These results can be viewed as an interpolation between the two extreme choices of parameters in Corollary~\ref{original_conjecture} (where $s=n-1$ and $\R(n)=n$) and our generalization of Kruskal's theorem (where $s=1$ and $\R= \I$). We use this interpolation to extend several recent results in~\cite{1751-8121-48-4-045303, Ballico:2018aa,ballico2020linearly}. In Section~\ref{non-rank} we prove uniqueness results for non-rank decompositions of $\sum_{a \in [n]} x_a$ (i.e., decompositions into a non-minimal number of product tensors), which appear to be the first known results of this kind.

We will make use of the following terminology.
\begin{definition}\label{slsplit}
For positive integers $n$ and $r$, multisets of product tensors
\begin{align}
\{x_a : a \in [n]\},\{y_a : a \in [r]\} \subseteq \pro{\V_1 : \dots : \V_m},
\end{align}
and non-zero scalars
\begin{align}
\{\alpha_a : a\in [n]\}, \{\beta_a : a \in [r]\} \subseteq \field^\times,
\end{align}
for which
\begin{align}
\sum_{a \in [n]} \alpha_a x_a=\sum_{a \in [r]} \beta_a y_a,
\end{align}
we say that the (ordered) pair of decompositions $(\sum_{a \in [n]} \alpha_a x_a, \sum_{a \in [r]} \beta_a y_a)$ has an $(s,l)$\textit{-subpartition} for some positive integers $s$ and $l$ if there exist pairwise disjoint subsets $Q_1,\dots, Q_l \subseteq [n]$ and pairwise disjoint subsets ${R_1,\dots, R_l \subseteq [r]}$ for which
\begin{align}
\max\{1, \abs{R_p}\}\leq \abs{Q_p}\leq s
\end{align}
and $\sum_{a \in Q_p} \alpha_a x_a = \sum_{a \in R_p} \beta_a y_a$ for all $p \in [l]$. We say that the pair $(\sum_{a \in [n]} \alpha_a x_a, \sum_{a \in [r]} \beta_a y_a)$ has an $(s,l)$\textit{-partition} if the sets $Q_1,\dots, Q_l \subseteq [n]$ and $R_1,\dots,R_l \subseteq [r]$ can be chosen to partition $[n]$ and $[r]$, respectively.

We say that the pair $(\sum_{a \in [n]} \alpha_a x_a, \sum_{a \in [r]} \beta_a y_a)$ is \textit{reducible} if there exist subsets $Q \subseteq [n]$ and $R \subseteq [r]$ for which $\abs{Q} > \abs{R}$ and $\sum_{a \in Q} \alpha_a x_a = \sum_{a \in R} \beta_a y_a$. We say that the pair is \textit{irreducible} if it is not reducible.

(Technically, the linear combinations appearing in the pair $(\sum_{a \in [n]} \alpha_a x_a, \sum_{a \in [r]} \beta_a y_a)$ should be regarded formally, so that they contain the data of the decompositions, and the linear combinations appearing elsewhere should be regarded as standard linear combinations in $\V$.)
\end{definition}
For brevity, we will often abuse notation and say that $\sum_{a \in [n]} \alpha_a x_a=\sum_{a \in [r]} \beta_a y_a$ has an $(s,l)$-subpartition (or is reducible) to mean that $(\sum_{a \in [n]} \alpha_a x_a,\sum_{a \in [r]} \beta_a y_a)$ has an $(s,l)$-subpartition (or is reducible). Note that the properties of $(s,l)$-subpartitions and reducibility are not symmetric with respect to permutation of the first and second decompositions. Typically, the first decomposition $\sum_{a \in [n]} \alpha_a x_a$ will be known, and the second decomposition $\sum_{a \in [r]} \beta_a y_a$ will be some unknown decomposition that we want to control.

An immediate consequence of Corollary~\ref{original_conjecture} is that if $\sum_{a \in [n]} x_a = \sum_{a \in [r]} y_a$ for some $r \leq n$, and the inequality~\eqref{beyond_inequality} holds for $s=n-1$ and $\R(n)=r$, then this pair of decompositions has an $(n-1, 1)$-subpartition (see Corollary~\ref{conjecture1} for a slight extension of this statement). By comparison, our generalization of Kruskal's theorem states that if $r \leq n$, and~\eqref{beyond_inequality} holds for $s=1$ and $\R=\I$, then $r=n$ and this pair of decompositions has a $(1,n)$-subpartition. In Section~\ref{lowrank} we prove statements on the existence of $(s,l)$-subpartitions for $r\leq n$, which interpolate between these two statements by trading stronger assumptions for stronger notions of uniqueness. In Section~\ref{non-rank} we prove a similar family of statements for $r\geq n+1$, obtaining novel uniqueness results for non-rank decompositions.

We conclude the introduction to this section by making a few notes about our definitions of $(s,l)$-subpartitions and reducibility. It may seem a bit strange at first that the inequality $\abs{R_p} \leq \abs{Q_p}$ appears in our definition of an $(s,l)$-subpartition. We have chosen to include this inequality because we typically want to reduce the number of product tensors that appear a decomposition. Our definition of reducibility captures a similar idea: If $n \leq r$ and $(\sum_{a \in [n]} \alpha_a x_a,\sum_{a \in [r]} \beta_a y_a)$ is reducible, then these decompositions can easily be combined to produce a decomposition into fewer than $n$ product tensors. (When $r \leq n$, reducibility of $(\sum_{a \in [r]} \beta_a y_a,\sum_{a \in [n]} \alpha_a x_a)$ captures a similar idea.) Assuming irreducibility will allow us to avoid certain pathological cases. Note that if $\sum_{a \in [n]} \alpha_a x_a$ is a tensor rank decomposition, then $(\sum_{a \in [n]} \alpha_a x_a,\sum_{a \in [r]} \beta_a y_a)$ is automatically irreducible.

Note that when $(\sum_{a \in [n]} \alpha_a x_a, \sum_{a \in [r]} \beta_a y_a)$ is irreducible, the existence of an $(s,l)$-subpartition is equivalent to the existence of pairwise disjoint subsets $Q_1,\dots, Q_l \subseteq [n]$ and pairwise disjoint subsets $R_1,\dots, R_l \subseteq [r]$ for which
\begin{align}
1 \leq \abs{R_p} = \abs{Q_p}\leq s
\end{align}
and $\sum_{a \in Q_p} \alpha_a x_a = \sum_{a \in R_p} \beta_a y_a$ for all $p \in [l]$. When $s=1$, these statements are equivalent even without the irreducibility assumption.
\subsection{Low-rank tensors in the span of a set of product tensors}\label{lowrank}
In this subsection, we prove statements about low-rank tensors in $\spn\{x_a : a \in [n]\}$. Most of our results in this section are consequences of Theorem~\ref{hakyegen}, which is a somewhat complicated statement on the existence of $(s,l)$-partitions. For $s=1$, and any ${r \in \{0,1,\dots, n\}}$ we obtain a condition on $\{x_a : a \in [n]\}$ for which the only rank $\leq r$ tensors in $\spn\{x_a : a \in [n]\}$ are those that can be written (uniquely) as a linear combination of $\leq r$ elements of $\{x_a : a \in [n]\}$.  For $s=1,r=0$ we obtain a sufficient condition for linear independence of $\{x_a : a \in [n]\}$. For $s=1,r=1$ we obtain a sufficient condition for the only product tensors in $\spn\{x_a : a \in [n]\}$ to be scalar multiples of $x_1,\dots, x_n$. These generalize Proposition~3.1 and Theorem~3.2 in~{\cite{1751-8121-48-4-045303}, respectively. The case $s=1, r=n$ reproduces our generalization of Kruskal's theorem. For $s=n-1$, we strengthen recent results in~\cite{Ballico:2018aa,ballico2020linearly} on circuits of product tensors.



Most of the statements in this subsection are consequences of the following theorem, which is complicated to state, but easy to prove with our splitting theorem.

\begin{theorem}\label{hakyegen}
Let $s \in [n-1]$, and $r\in \{0,1,\dots,n\}$ be integers. Suppose that for every subset $S \subseteq [n]$ with $s+1\leq \abs{S} \leq n,$ it holds that
\begin{align}\label{first_hakyegen_inequality}
\min\{2 \abs{S},\abs{S}+r\} \leq\sum_{j=1}^m (d_j^S-1)+1.
\end{align}
Then for any $v \in \spn \{x_a : a \in [n]\}$ with $\rank(v)\leq r$, and any decomposition ${v=\sum_{a \in [\tilde{r}]} y_a}$ of $v$ into $\tilde{r} \leq r$ product tensors $\{y_a : a \in [\tilde{r}]\} \subseteq \pro{\V_1:\dots : \V_m}$, the following holds: For any subset $S \subseteq [n]$ for which $\abs{S} \geq s+1$, and non-zero scalars $\{\alpha_a : a \in S\} \subseteq \field^\times$ for which it holds that
\begin{align}
\sum_{a \in S} \alpha_a x_a = \sum_{a \in [\tilde{r}]} y_a
\end{align}
and $( \sum_{a \in [\tilde{r}]} y_a,\sum_{a \in S} \alpha_a x_a)$ is irreducible, the pair of decompositions $(\sum_{a \in [n]} \alpha_a x_a, \sum_{a \in [\tilde{r}]} y_a)$ has an $(s,l)$-partition, for $l=\ceil{\abs{S}/s}$.
\end{theorem}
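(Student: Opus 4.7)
The plan is to follow the same template as the proof of Theorem~\ref{k-gen} (setting up a vanishing combined multiset, passing to connected components, and invoking Corollary~\ref{original_conjecture}), but to use both branches of the $\min$ on the left-hand side of~\eqref{first_hakyegen_inequality} to bound the size of each component in two complementary ways.

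First, I would form the combined multiset of product tensors
$$E = \{\alpha_a x_a : a \in S\}\sqcup\{-y_a : a \in [\tilde r]\} \subseteq \pro{\V_1:\dots:\V_m},$$
which satisfies $\Sigma(E) = 0$ by hypothesis. Decomposing $E$ into its connected components $E = T_1 \sqcup \dots \sqcup T_t$, each $T_p$ also satisfies $\Sigma(T_p) = 0$ by the basic matroid-theoretic facts reviewed in Section~\ref{mp}. Writing $Q_p \subseteq S$ for the indices of the $x$-type tensors in $T_p$ and $R_p \subseteq [\tilde r]$ for the indices of the $y$-type tensors, one has $\sum_{a \in Q_p}\alpha_a x_a = \sum_{a \in R_p} y_a$ for every $p \in [t]$.

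The two cardinality bounds on $(Q_p,R_p)$ I need are $\abs{R_p} \leq \abs{Q_p}$ and $\abs{Q_p} \leq s$. The first is immediate from the irreducibility of $(\sum_{a\in[\tilde r]}y_a,\sum_{a\in S}\alpha_a x_a)$ applied with $Q=R_p$ and $R=Q_p$; this also forces $Q_p \neq \emptyset$, since $Q_p = \emptyset$ would give $\sum_{a \in R_p} y_a = 0$ in violation of irreducibility (and $T_p$ cannot be empty). The second is the core step. Supposing $\abs{Q_p}\geq s+1$, apply the hypothesis~\eqref{first_hakyegen_inequality} to the subset $Q_p\subseteq[n]$ to obtain
$$\min\{2\abs{Q_p},\; \abs{Q_p}+r\} \leq \sum_{j=1}^m(d_j^{Q_p}-1)+1 \leq \sum_{j=1}^m(e_j-1)+1,$$
where $e_j$ denotes the dimension of the span of the $j$-th subsystem components of the tensors in $T_p$; the inequality $e_j \geq d_j^{Q_p}$ holds because the $j$-th subsystem span of $\{\alpha_a x_a : a \in Q_p\}$ is a subspace of the corresponding span for $T_p$. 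On the other hand, $\abs{T_p} = \abs{Q_p}+\abs{R_p} \leq 2\abs{Q_p}$ by the first bound, while $\abs{T_p} \leq \abs{Q_p}+\tilde r \leq \abs{Q_p}+r$, so $\abs{T_p} \leq \min\{2\abs{Q_p},\;\abs{Q_p}+r\} \leq \sum_j(e_j-1)+1$. By Corollary~\ref{original_conjecture}, $T_p$ must split, contradicting its status as a connected component of $E$.

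The main obstacle is precisely the pairing between these two cardinality bounds on $\abs{T_p}$ and the two branches of the $\min$ in~\eqref{first_hakyegen_inequality}---this is exactly what the $\min$ is designed to accommodate, and is where the irreducibility assumption is essential. Once $\abs{Q_p}\leq s$ has been established for every $p$, the counting $\sum_p\abs{Q_p}=\abs{S}$ together with $\abs{Q_p}\leq s$ and $\abs{Q_p}\geq 1$ immediately yields $t\geq\ceil{\abs{S}/s} = l$, and the sets $\{Q_p\}_{p\in[t]}$ and $\{R_p\}_{p\in[t]}$ supply the required $(s,l)$-partition.
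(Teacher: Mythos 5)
Your proposal follows the paper's own proof essentially step for step: form the combined vanishing multiset, decompose into connected components, use irreducibility to get $\abs{R_p}\le\abs{Q_p}$ (hence $\abs{T_p}\le 2\abs{Q_p}$) and the trivial bound $\abs{T_p}\le\abs{Q_p}+\tilde r\le\abs{Q_p}+r$ for the other branch of the $\min$, then invoke Corollary~\ref{original_conjecture} on any component with $\abs{Q_p}\ge s+1$ to reach a contradiction, and finish with pigeonhole. The only addition is the explicit justification that $Q_p\neq\emptyset$ via irreducibility, which the paper leaves implicit; this is correct but not a different argument.
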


\begin{proof}

For each $a \in [\tilde{r}]$, let $x_{n+a}=-y_a$, and let $E = S \cup ([n+\tilde{r}]\setminus [n]) \subseteq [n+\tilde{r}]$. Let ${T_1\sqcup \dots \sqcup T_t = E}$ be a partition of $E$ into index sets corresponding to the connected components of $\{x_a : a \in E\}$. Since $( \sum_{a \in [\tilde{r}]} y_a,\sum_{a \in S} \alpha_a x_a)$ is irreducible, it must hold that
\begin{align}
\bigabs{T_p \cap S}\geq \bigabs{T_p \cap (E \setminus S)}
\end{align}
for all $p \in [t]$, and hence
\begin{align}
\abs{T_p} \leq \min\big\{ 2 \bigabs{T_p \cap S}, \bigabs{T_p \cap S} +r\big\}.
\end{align}
If $\abs{T_p \cap S} \geq s+1,$ then $\{x_a : a \in T_p\}$ splits by~\eqref{first_hakyegen_inequality} and Corollary~\ref{original_conjecture}, a contradiction. So it must hold that ${\abs{T_p \cap S} \leq s}$ for all $p \in [t]$. It follows that $t \geq \ceil{\abs{S}/s}$ by the pigeonhole principle, and one can take $Q_p=T_p \cap S$ and
\begin{align}
R_p = \{a \in [\tilde{r}] : n+a \in T_p \cap (E \setminus S)\}
\end{align}
for all $p \in [t]$ to conclude.
\end{proof}

\subsubsection{$s=1$ case of Theorem~\ref{hakyegen}}\label{section_s1hakyegen}

The $s=1$ case of Theorem~\ref{hakyegen} gives a sufficient condition for which the only tensor rank $\leq r$ elements of $\spn\{x_a : a \in [n]\}$ are those which can be written (uniquely) as a linear combination of $\leq r$ elements of $\spn\{x_a : a \in [n]\}$. In this subsection, we state this case explicitly, and observe several consequences of this case. In particular, we observe a lower bound on tensor rank and a sufficient condition for a set of product tensors to be linearly independent.

\begin{cor}[$s=1$ case of Theorem~\ref{hakyegen}]\label{s1hakyegen}
Let $r\in \{0,1,\dots,n\}$ be an integer. Suppose that for every subset $S \subseteq [n]$ such that $2\leq \abs{S} \leq n$, it holds that
\begin{align}\label{hakyegen_inequality}
\abs{S}+\min\{\abs{S},r\} \leq\sum_{j=1}^m (d_j^S-1)+1.
\end{align}
Then any non-zero linear combination of more than $r$ elements of $\{x_a : a \in [n]\}$ has tensor rank greater than $r$, and every tensor $v \in \spn\{x_a : a \in [n]\}$ of tensor rank at most $r$ has a unique tensor rank decomposition into a linear combination of elements of $\{x_a : a \in [n]\}$.
\end{cor}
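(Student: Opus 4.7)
The plan is to derive Corollary~\ref{s1hakyegen} as a direct specialization of Theorem~\ref{hakyegen} with $s = 1$. First I would unpack what an $(1, l)$-partition actually means in this regime: each $Q_p$ is a singleton $\{q_p\}$ and each $R_p \subseteq [\tilde{r}]$ has $|R_p| \leq 1$, subject to $\alpha_{q_p} x_{q_p} = \sum_{a \in R_p} y_a$. Since $x_{q_p}$ is a product tensor (hence nonzero) and $\alpha_{q_p} \neq 0$, necessarily $|R_p| = 1$. So an $(1, |S|)$-partition collapses to a bijection $q_p \leftrightarrow a_p$ between $S$ and $[\tilde{r}]$ under which $y_{a_p} = \alpha_{q_p} x_{q_p}$; in particular $|S| = \tilde{r}$.

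For the first claim I would argue by contradiction. Suppose $v = \sum_{a \in S} \alpha_a x_a$ is nonzero with $\alpha_a \in \field^{\times}$, $|S| > r$, and $\rank(v) \leq r$. The case $r = 0$ is immediate since nonzero tensors have rank at least one; otherwise $|S| \geq r + 1 \geq 2$. Fix any tensor rank decomposition $v = \sum_{a \in [\tilde{r}]} y_a$ with $\tilde{r} \leq r$, and observe that the pair $(\sum_{a \in [\tilde{r}]} y_a, \sum_{a \in S} \alpha_a x_a)$ is automatically irreducible: any hypothetical witness $R \subseteq [\tilde{r}]$, $Q \subseteq S$ with $|Q| < |R|$ and $\sum_{a \in R} y_a = \sum_{a \in Q} \alpha_a x_a$ would yield the strictly shorter decomposition $v = \sum_{a \in [\tilde{r}] \setminus R} y_a + \sum_{a \in Q} \alpha_a x_a$, contradicting minimality of $\tilde{r}$. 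The corollary's hypothesis~\eqref{hakyegen_inequality} is exactly the $s = 1$ case of~\eqref{first_hakyegen_inequality}, so Theorem~\ref{hakyegen} applies and, by the unpacking above, forces $|S| = \tilde{r} \leq r$, contradicting $|S| > r$.

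For the second claim, let $v \in \spn\{x_a : a \in [n]\}$ with $\rank(v) = \tilde{r} \leq r$, and write $v = \sum_{a \in S} \beta_a x_a$ with $\beta_a \in \field^{\times}$. The first claim forces $|S| \leq r$; the boundary cases $|S| \in \{0, 1\}$ are handled directly (the zero tensor has rank $0$, and a nonzero scalar multiple of a single $x_a$ has rank $1$ with an obvious unique rank-one decomposition up to scalar). For $|S| \geq 2$, apply the same irreducibility argument to any tensor rank decomposition $\sum_{a \in [\tilde{r}]} y_a$ of $v$ and invoke Theorem~\ref{hakyegen}. The resulting bijection $y_{a_p} = \beta_{q_p} x_{q_p}$ shows simultaneously that $\sum_{a \in S} \beta_a x_a$ is itself a tensor rank decomposition (existence), and that every tensor rank decomposition of $v$ equals the multiset $\{\beta_a x_a : a \in S\}$ up to relabeling (uniqueness).

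The only mildly delicate point is confirming the irreducibility condition required by Theorem~\ref{hakyegen}; this is where minimality of the rank decomposition is actually used, but the verification is routine. Beyond that, the proof is essentially bookkeeping — translating the $s = 1$ partition conclusion into a bijection, and handling the small cases $|S| \leq 1$ in which Theorem~\ref{hakyegen} does not formally apply.
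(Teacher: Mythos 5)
Your proposal is correct and in substance parallels the paper's argument, but it takes a slightly different route for the uniqueness half and is somewhat more self-contained. For the first claim you both specialize Theorem~\ref{hakyegen} with $s=1$, unpack the $(1,|S|)$-partition to conclude $|S| = \tilde r$, and conclude by contraposition; the paper's own proof is silent about verifying the irreducibility hypothesis of Theorem~\ref{hakyegen}, and you correctly supply the argument (minimality of $\tilde r$ rules out any witness $Q\subseteq S$, $R\subseteq[\tilde r]$ with $|Q|<|R|$), which is exactly the right fix and worth making explicit. For the second claim the paper does \emph{not} invoke Theorem~\ref{hakyegen} again: having established that $v = \sum_{a\in Q}\alpha_a x_a$ with $|Q|=\tilde r$, it applies Theorem~\ref{k-gen} to the multiset $\{\alpha_a x_a : a\in Q\}$ (checking that~\eqref{hakyegen_inequality} implies the hypotheses of Theorem~\ref{k-gen} on $Q$ because $\min\{|S'|,r\}=|S'|$ for $S'\subseteq Q$). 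You instead re-apply Theorem~\ref{hakyegen} to the pair $(\sum y_a, \sum_{a\in S}\beta_a x_a)$ and read the bijection off the $(1,|S|)$-partition; this is a valid and arguably more uniform route, and incidentally sidesteps the fact that Theorem~\ref{k-gen} is stated only for $m\geq 3$ and $n\geq 2$ while the ambient section permits $m\geq 2$ and $\tilde r\leq 1$ (parameter edge cases which you handle explicitly and the paper leaves implicit). Both approaches ultimately rest on the same splitting theorem, so the difference is organizational rather than conceptual, but your version is marginally more careful about the hypotheses actually in force.
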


Note that a sufficient condition for the inequality~\eqref{hakyegen_inequality} to hold is that
\begin{align}
n+r \leq \sum_{j=1}^m (k_j-1)+1,
\end{align}
where $k_j=\krank(x_{1,j},\dots, x_{n,j})$ for all $j \in [m]$. This recovers Proposition~3.1 and Theorem~3.2 in~{\cite{1751-8121-48-4-045303} in the $r=0$ and $r=1$ cases, respectively, and interpolates between Kruskal's theorem and these results. For clarity, we will explicitly state the $r=0$ and $r=1$ cases of Corollary~\ref{s1hakyegen} at the end of this subsection.

\begin{proof}[Proof of Corollary~\ref{s1hakyegen}]
Let $S \subseteq [n]$ be a subset, let $\{\alpha_a : a \in S\} \subseteq \field^{\times}$ be a multiset of non-zero scalars, let $\tilde{r}=\rank[\sum_{a \in S} \alpha_a x_a]$, and let $\{y_a : a \in [\tilde{r}]\}\subseteq \pro{\V_1 : \dots : \V_m}$ be such that $\sum_{a \in S} \alpha_a x_a=\sum_{a \in [\tilde{r}]} y_a$. If $\tilde{r}\leq r$, then by the $s=1$ case of Theorem~\ref{hakyegen}, this pair of decompositions has a $(1,\abs{S})$-partition. It follows that $\abs{S}=\tilde{r}$. Hence, every linear combination of more than $r$ elements of $\{x_a : a \in [n]\}$ has tensor rank greater than $r$.

Let $v \in \spn \{x_a : a \in [n]\}$ have tensor rank $\tilde{r} \leq r$. Then ${v= \sum_{a \in Q} \alpha_a x_a}$ for some set $Q \subseteq [n]$ of size $\abs{Q}=\tilde{r}$ and non-zero scalars $\{ \alpha_a : a \in Q\}$. It follows from~\eqref{hakyegen_inequality} and Theorem~\ref{k-gen} that this is the unique tensor rank decomposition of $v$.
\end{proof}

Corollary~\ref{s1hakyegen} immediately implies the following lower bound on $\rank[\sum_{a \in [n]} x_a]$.
\begin{cor}\label{tr_lower}
If for every subset $S \subseteq [n]$ for which $2\leq \abs{S} \leq n,$ it holds that
\begin{align}\label{tr_lower_eq}
\abs{S}+\min\{\abs{S},r\} \leq\sum_{j=1}^m (d_j^S-1)+1,
\end{align} 
then $\rank[\sum_{a \in [n]} x_a]\geq r+1$.
\end{cor}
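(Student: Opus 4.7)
The plan is to invoke Corollary~\ref{s1hakyegen} directly, since the hypothesis~\eqref{tr_lower_eq} of Corollary~\ref{tr_lower} is literally the hypothesis of Corollary~\ref{s1hakyegen}. The target will be the sum $v=\sum_{a\in[n]} x_a$, which by construction is a linear combination of all $n$ product tensors $\{x_a:a\in[n]\}$ with the non-zero coefficient $1$.

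Before applying the corollary I would first verify that $v\neq 0$, so that $\rank(v)$ is well defined and the clause ``tensor rank greater than $r$'' is meaningful. For this step I would weaken the parameter: since $\min\{\abs{S},0\}=0\leq\min\{\abs{S},r\}$, the assumption~\eqref{tr_lower_eq} at the given $r$ implies the same assumption at $r=0$. Applying Corollary~\ref{s1hakyegen} at $r=0$ then tells us that every non-zero linear combination of $\{x_a:a\in[n]\}$ has positive tensor rank, which is to say that $\{x_a:a\in[n]\}$ is linearly independent; in particular $v\neq 0$.

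With non-vanishing of $v$ in hand, I would apply Corollary~\ref{s1hakyegen} at the given value of $r$. In the only nontrivial regime $r\leq n-1$ (the bound $\rank(v)\geq r+1$ being vacuous otherwise, since $\rank(v)\leq n$), the sum $v$ is a non-zero linear combination of $n>r$ elements of $\{x_a:a\in[n]\}$, so the corollary directly yields $\rank(v)>r$, i.e.\ $\rank[\sum_{a\in[n]} x_a]\geq r+1$. There is really no obstacle to overcome here: the entire content of the result is already packed into Corollary~\ref{s1hakyegen}, and the only piece of actual work is the bookkeeping observation that weakening $r$ to $0$ in the hypothesis forces linear independence of the $x_a$, which in turn guarantees that the ambient sum is non-zero.
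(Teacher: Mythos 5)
Your proof is correct and takes essentially the same route as the paper, which simply observes that Corollary~\ref{s1hakyegen} ``immediately implies'' Corollary~\ref{tr_lower}; your two steps (weaken to $r=0$ to get linear independence and hence $v\neq 0$, then apply Corollary~\ref{s1hakyegen} at the stated $r$ to $v=\sum_{a\in[n]}x_a$ viewed as a non-zero linear combination of $n>r$ elements) spell out exactly the intended argument.

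One small inaccuracy worth flagging: you describe the case $r\geq n$ as \emph{vacuous}. It is not. For $r=n$ the hypothesis~\eqref{tr_lower_eq} reduces to the hypothesis of Theorem~\ref{k-gen} and is certainly satisfiable (e.g.\ $x_a=e_a^{\otimes 3}$ with $m=3$), yet the claimed conclusion $\rank(v)\geq n+1$ would contradict the trivial bound $\rank(v)\leq n$. So the statement is not automatically true there; rather, Corollary~\ref{tr_lower} is to be read for $r\leq n-1$ (equivalently, its content is $\rank(v)\geq\min\{n,r+1\}$, which is how the paper uses it in the follow-up display). This is a feature of the paper's own phrasing, not a defect you introduced, but ``vacuous'' is the wrong word and a careful write-up should restrict to $r\leq n-1$ explicitly rather than dismiss the complementary range.
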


In particular, Corollary~\ref{tr_lower} implies that
\begin{align}\label{k-rank_bound}
\rank\bigg[\sum_{a \in [n]} x_a\bigg]\geq \min\bigg\{n, \sum_{j=1}^m (k_j-1)+2-n\bigg\}.
\end{align}
In Section~\ref{waring_rank} we prove that when the Kruskal ranks are sufficiently balanced, two of the k-ranks $k_i,k_j$ appearing in the bound~\eqref{k-rank_bound} can be replaced with standard ranks $d_i,d_j$ (Theorem~\ref{tensor_cor}). Our Theorem~\ref{tensor_cor} is independent of the bound in Corollary~\ref{tr_lower} (see Example~\ref{ex:independent}).

We close this subsection by stating the $r=0$ and $r=1$ cases of Corollary~\ref{s1hakyegen}, which generalize Proposition~3.1 and Theorem~3.2 in~{\cite{1751-8121-48-4-045303}, respectively.  We remark that the $m=2$ subcase of Corollary~\ref{hakyegen0} was proven by Pierpaola Santarsiero in unpublished work, using a different proof technique.
\begin{cor}[$s=1$, $r=0$ case of Theorem~\ref{hakyegen}]\label{hakyegen0}
If for every subset $S \subseteq [n]$ for which $2\leq \abs{S} \leq n,$ it holds that
\begin{align}
\abs{S} \leq\sum_{j=1}^m (d_j^S-1)+1,
\end{align}
then $\{x_a : a \in [n]\}$ is linearly independent.
\end{cor}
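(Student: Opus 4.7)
The plan is to deduce linear independence by contradiction, using Corollary~\ref{original_conjecture} applied to a minimal linearly dependent subset (i.e., a circuit in the sense defined earlier in the introduction).

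First I would suppose toward contradiction that $\{x_a : a \in [n]\}$ is linearly dependent and choose an inclusion-minimal linearly dependent subset indexed by $S \subseteq [n]$. By minimality, $\{x_a : a \in S\}$ is a circuit, and since every $x_a$ is non-zero we must have $|S| \geq 2$. The hypothesis of the corollary applied to this particular $S$ gives $|S| \leq \sum_{j=1}^m (d_j^S - 1) + 1$, which is exactly the hypothesis of Corollary~\ref{original_conjecture} for the multiset $\{x_a : a \in S\}$. That corollary therefore produces a proper non-empty subset $T \subsetneq \{x_a : a \in S\}$ which separates it, i.e., $\spn\{x_a : a \in S\} = \spn(T) \oplus \spn(T^c)$.

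The remaining step is the routine observation that a circuit cannot split. Writing the circuit dependence as $\sum_{a \in S} \alpha_a x_a = 0$ with every $\alpha_a \in \field^\times$, and grouping the sum according to whether $x_a \in T$ or $x_a \in T^c$, we get $\sum_{x_a \in T} \alpha_a x_a = -\sum_{x_a \in T^c} \alpha_a x_a$. Both sides lie in $\spn(T) \cap \spn(T^c) = \{0\}$, so they vanish. But then the vectors in $T$, which form a proper subset of the circuit, admit a non-trivial linear dependence, contradicting minimality of $S$. This contradiction proves linear independence.

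The only non-trivial ingredient is the appeal to Corollary~\ref{original_conjecture}; once it is available, the circuit manipulation is a one-line matroid-style argument and I anticipate no technical obstacle. One could alternatively derive the statement as the $s=1$, $r=0$ specialization of Theorem~\ref{hakyegen} by unpacking the definitions of $(s,l)$-subpartitions and reducibility (here the only decomposition of $v = 0$ into $\tilde{r} \leq 0$ product tensors is the empty one, which forces a contradiction with the $|Q_p| \geq 1$ requirement), but the direct contradiction via Corollary~\ref{original_conjecture} is shorter and self-contained.
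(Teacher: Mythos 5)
Your proof is correct, and it takes a more direct route than the paper. The paper obtains Corollary~\ref{hakyegen0} as the $r=0$ instance of Corollary~\ref{s1hakyegen}, whose proof invokes the full machinery of Theorem~\ref{hakyegen}: form the combined multiset $\{x_a : a \in S\} \cup \{-y_a : a \in [\tilde{r}]\}$, decompose it into connected components, apply Corollary~\ref{original_conjecture} to bound the size of each component, and let the notions of $(s,l)$-partitions, reducibility, and irreducibility do the bookkeeping. Your argument instead applies Corollary~\ref{original_conjecture} a single time, directly to a minimal linearly dependent subset, i.e.\ a circuit, and closes with the elementary matroid observation that a circuit cannot split; this is the same fact the paper uses elsewhere (e.g.\ in the proof of Corollary~\ref{linincor}, where it says ``circuits are connected''), and you supply its one-line proof by grouping the dependence $\sum_{a\in S}\alpha_a x_a=0$ across the two sides of the split. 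Both routes rest on the same key lemma (Corollary~\ref{original_conjecture}, a corollary of the splitting theorem), but yours is shorter and self-contained for this $r=0$ case, whereas the paper's uniform argument is designed to handle $r\ge 1$, where there is a genuine second decomposition to control. Your closing remark that the statement could alternatively be extracted from Theorem~\ref{hakyegen} by unpacking the empty decomposition $v=0$ is also accurate, and in fact describes what the paper implicitly does.
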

\begin{cor}[$s=1$, $r=1$ case of Theorem~\ref{hakyegen}]\label{hakyegen1}
If for every subset $S \subseteq [n]$ for which $2\leq \abs{S} \leq n,$ it holds that
\begin{align}
\abs{S} \leq\sum_{j=1}^m (d_j^S-1),
\end{align}
then
\begin{align}
\spn\{x_a : a \in [n]\} \cap \pro{\V_1 : \dots : \V_m}=\complex^\times x_1 \sqcup \dots \sqcup \complex^\times x_n.
\end{align}
\end{cor}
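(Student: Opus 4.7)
The plan is to reduce the statement directly to the $r = 1$ case of Corollary~\ref{s1hakyegen}, which already does the heavy lifting. First I would check that the two hypotheses coincide: for every $S \subseteq [n]$ with $|S| \geq 2$ we have $\min\{|S|, 1\} = 1$, so the inequality $|S| \leq \sum_{j=1}^m (d_j^S - 1)$ assumed here rewrites as $|S| + 1 \leq \sum_{j=1}^m (d_j^S - 1) + 1$, which is exactly~\eqref{hakyegen_inequality} specialized to $r = 1$. Thus Corollary~\ref{s1hakyegen} applies with $r = 1$.

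Next I would invoke its two conclusions. The first says that any non-zero linear combination of strictly more than one element of $\{x_a : a \in [n]\}$ has tensor rank at least $2$, so such a combination is never a product tensor. The second says that every tensor in $\spn\{x_a : a \in [n]\}$ of tensor rank at most $1$ has a unique tensor rank decomposition as a linear combination of elements of $\{x_a : a \in [n]\}$. Since a product tensor has tensor rank exactly $1$, the only way to realize it in the span is as $\alpha x_a$ for some $a \in [n]$ and some $\alpha \in \field^\times$. This gives the nontrivial inclusion
\begin{align}
\spn\{x_a : a \in [n]\} \cap \pro{\V_1 : \dots : \V_m} \subseteq \bigcup_{a \in [n]} \complex^\times x_a,
\end{align}
and the reverse inclusion is immediate since each $x_a$ is a product tensor in the span.

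Finally, to justify the disjoint-union notation $\sqcup$, I would again use the first conclusion: if $x_a$ and $x_b$ were proportional for some $a \neq b$, then $x_a$ would also arise as a non-trivial linear combination of the two elements $x_a, x_b$, producing a rank-$1$ combination of more than one element and contradicting conclusion (a). Hence the sets $\complex^\times x_1, \dots, \complex^\times x_n$ are pairwise disjoint. I do not anticipate any genuine obstacle here; the whole argument is a bookkeeping reduction to Corollary~\ref{s1hakyegen}, and the only step requiring any care is matching up the inequality after exploiting $\min\{|S|, 1\} = 1$.
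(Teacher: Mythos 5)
Your proposal is correct and matches the paper's approach: Corollary~\ref{hakyegen1} is indeed presented there as a direct specialization of Corollary~\ref{s1hakyegen} to $r=1$, and your matching of hypotheses via $\min\{\abs{S},1\}=1$ (for $\abs{S}\geq 2$) is exactly the intended reduction. The verification that the union is disjoint --- which the paper leaves implicit --- is handled correctly by your appeal to conclusion (a); one could also note it directly from the hypothesis with $S=\{a,b\}$, since $x_a \parallel x_b$ would force $d_j^S=1$ for all $j$ and hence $2\leq 0$, but your argument is equally valid.
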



\subsubsection{$s=n-1$ case of Theorem~\ref{hakyegen}}\label{sn1section}
In this subsection we state a slight adaptation of the $s=n-1$ case of Theorem~\ref{hakyegen}, which gives sufficient conditions for a pair of decompositions to have an $(n-1,1)$-subpartition. After stating this case, we observe that the subcase $r=1$ improves recent results in~\cite{Ballico:2018aa,ballico2020linearly} concerning circuits of product tensors. We then remark on applications of this special case in quantum information theory.

\begin{cor}[$s=n-1$ case of Theorem~\ref{hakyegen}]\label{conjecture1}
Let $r \in \{0,1,\dots, n\}$ be an integer.
If ${n+r \leq \sum_{j=1}^m (d_j-1)+1}$, then for any non-negative integer $\tilde{r} \leq r$ and multiset of product tensors $\{y_a : a \in [\tilde{r}]\}$ for which $\sum_{a \in [n]} x_a = \sum_{a \in [\tilde{r}]} y_a$, the pair of decompositions $(\sum_{a \in [n]} x_a,\sum_{a \in [\tilde{r}]} y_a)$ has an $(n-1,1)$-subpartition.

Moreover, if ${n+r \leq \sum_{j=1}^m (d_j-1)+1}$, $\tilde{r}=\rank[\sum_{a \in [n]} x_a]$, and $1\leq \tilde{r} \leq \min\{r,n-1\}$, then there exists a subset $S \subseteq [n]$ with $\tilde{r} \leq \abs{S} \leq n-1$ for which

\begin{align}\label{Ulike}
\rank\bigg[\sum_{a \in S} x_a\bigg] < \tilde{r}.
\end{align}
\end{cor}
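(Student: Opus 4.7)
For Part 1, my plan is to apply Theorem~\ref{hakyegen} with $s=n-1$ and $S=[n]$: since $r\leq n$, the required inequality $\min\{2n,n+r\}\leq\sum_{j=1}^m(d_j-1)+1$ reduces to our hypothesis, and Theorem~\ref{hakyegen} then produces an $(n-1,\ceil{n/(n-1)})=(n-1,2)$-partition of the pair $(\sum_{a\in[n]} x_a,\sum_{a\in[\tilde{r}]} y_a)$ whenever $(\sum_{a\in[\tilde{r}]} y_a,\sum_{a\in[n]} x_a)$ is irreducible, and either block of this partition is an $(n-1,1)$-subpartition. The reducible case I handle directly: a witness $Q\subseteq[\tilde{r}]$, $R\subseteq[n]$ with $|Q|>|R|$ and $\sum_{a\in Q} y_a=\sum_{a\in R} x_a$ yields an $(n-1,1)$-subpartition via complements $([n]\setminus R,[\tilde{r}]\setminus Q)$ provided $R\neq\emptyset$; when $R=\emptyset$ we have $\sum_{a\in Q}y_a=0$, and discarding those $y$'s strictly decreases $\tilde{r}$. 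Iterating either produces a valid subpartition at some step or reduces to $\sum_{a\in[n]} x_a=0$, in which case Corollary~\ref{original_conjecture} splits $\{x_a:a\in[n]\}$ and the separator (paired with an empty $R_1$) finishes Part 1.

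For Part 2, the key observation is that when $v=\sum_{a\in[\tilde{r}]} y_a$ is a tensor rank decomposition, the pair $(\sum_{a\in[\tilde{r}]} y_a,\sum_{a\in[n]} x_a)$ is automatically irreducible. Setting $x_{n+a}=-y_a$, I let $T_1\sqcup\cdots\sqcup T_t=[n+\tilde{r}]$ be the connected components of $E=\{x_a:a\in[n+\tilde{r}]\}$, each satisfying $\sum_{a\in T_p}x_a=0$. Let $N_p=T_p\cap[n]$ and $M_p=\{a\in[\tilde{r}]:n+a\in T_p\}$, with sizes $n_p,m_p$. Irreducibility forces $m_p\leq n_p$; minimality of the tensor rank decomposition and non-vanishing of each $y_a$ force $n_p\geq 1$; and Corollary~\ref{original_conjecture} applied to $E$ forces $t\geq 2$, hence $n_p\leq n-1$. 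Since $\sum_p n_p=n>\tilde{r}=\sum_p m_p$, some $p^*$ satisfies $n_{p^*}>m_{p^*}$, and I can further arrange $m_{p^*}\leq\tilde{r}-1$ (if some $m_p$ equals $\tilde{r}$ then all other $m_q=0$, and any such $q$ serves as $p^*$).

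The candidate subset is $S=N_{p^*}\cup T$, where $T\subseteq[n]\setminus N_{p^*}$ is arbitrary of size $\max(0,\tilde{r}-n_{p^*})$. Then $|S|=\max(n_{p^*},\tilde{r})\in[\tilde{r},n-1]$, and $\sum_{a\in S}x_a=\sum_{a\in M_{p^*}}y_a+\sum_{a\in T}x_a$ has tensor rank at most $m_{p^*}+|T|$, which equals $m_{p^*}\leq\tilde{r}-1$ when $n_{p^*}\geq\tilde{r}$ and $\tilde{r}-(n_{p^*}-m_{p^*})\leq\tilde{r}-1$ otherwise. The main obstacle is this case analysis in Part 2 --- simultaneously guaranteeing $|S|\geq\tilde{r}$, $|S|\leq n-1$, and tensor rank strictly less than $\tilde{r}$ in all regimes --- and the key mechanism is that the strict inequality $n_{p^*}>m_{p^*}$ converts via this padding construction into a strict rank deficit of $n_{p^*}-m_{p^*}\geq 1$.
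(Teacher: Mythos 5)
Your proof is correct, and it diverges from the paper's on both parts. For Part~1, the paper does not invoke Theorem~\ref{hakyegen} at all: it applies Corollary~\ref{original_conjecture} directly to $E=\{x_a : a\in[n]\}\cup\{-y_a : a\in[\tilde r]\}$ and argues on the connected components $T_1,\dots,T_t$ of $E$. No component can contain all $n$ of the $x_a$'s, since otherwise the splitting theorem would force $\dim\spn(T_p)\geq\sum_j(d_j-1)+1\geq n+r\geq |E|$, contradicting $\dim\spn(T_p)\leq|T_p|-1$; then the counting identity $\sum_p|T_p\cap[n]|=n\geq\tilde r=\sum_p|T_p\setminus[n]|$ produces a component with $1\leq|T_p\cap[n]|\leq n-1$ and $|T_p\cap[n]|\geq|T_p\setminus[n]|$. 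Your route through Theorem~\ref{hakyegen} plus the reducibility iteration (complement trick when $R\neq\emptyset$, discard-and-recurse when $R=\emptyset$) is sound, but heavier than necessary: the direct component argument bypasses both the iteration and the $\tilde r=0$ endgame.

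For Part~2 your argument is essentially the same connected-components analysis the paper uses, but you carry out one step with more care, and it matters. The paper takes a pair $(Q,R)$ from Part~1 and writes, in its first case, ``if $|R|<|Q|$ and $|Q|\geq\tilde r$, take $S=Q$.'' But if that $(Q,R)$ happens to have $|R|=\tilde r$, then $R=[\tilde r]$, $\sum_{a\in Q}x_a$ equals the full tensor, and $\rank[\sum_{a\in S}x_a]=\tilde r$, not $<\tilde r$ --- so $S=Q$ does not work for that choice of $(Q,R)$. The needed fix is to choose the component with $|R|<\tilde r$, which always exists since $t\geq 2$, every component has $n_p\geq 1$, and $\tilde r\geq 1$. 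This is precisely what you do by explicitly arranging $m_{p^*}\leq\tilde r-1$ before constructing $S$. Your padding set $T$ is the analogue of the paper's ``take $S\supseteq Q$ with $|S|=\tilde r$,'' and your verification that the resulting rank bound is $m_{p^*}+|T|\leq\tilde r-1$ in both regimes (via $n_{p^*}-m_{p^*}\geq 1$) is exactly the accounting the statement requires. In short: same machinery for Part~2, but your write-up closes a small gap that the paper leaves implicit.
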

\begin{proof}
The statement of the first paragraph is slightly different from the $s=n-1$ case of Theorem~\ref{hakyegen}, and it follows easily from Corollary~\ref{original_conjecture}. To prove the statement of the second paragraph, let $\{z_a : a \in [\tilde{r}]\}\in \pro{\V_1: \dots : \V_m}$ be any multiset of product tensors for which $\sum_{a \in [n]} x_a = \sum_{a \in [\tilde{r}]} z_a$, and let $Q \subseteq [n]$, $R \subseteq [\tilde{r}]$ be subsets for which
\begin{align}
\max\{\abs{R},1\} \leq \abs{Q} \leq n-1
\end{align}
and $\sum_{a \in Q} x_a = \sum_{a \in R} z_a$. If $\abs{R}<\abs{Q}$ and $\abs{Q} \geq \tilde{r}$, then we can take $S=Q$. If $\abs{R}<\abs{Q}$ and $\abs{Q} \leq \tilde{r}-1$, then we can take $S\subseteq [n]$ to be any subset for which $S \supseteq Q$ and $\abs{S}=\tilde{r}$. It remains to consider the case $\abs{R}=\abs{Q}$. In this case, it must hold that $\bigabs{[\tilde{r}]\setminus R} < \bigabs{[n] \setminus Q},$ so we can find $S$ using the same arguments as in the case $\abs{R}<\abs{Q}$.
\end{proof}

A special case of the $r=1$ case of Corollary~\ref{conjecture1} gives an upper bound of $n-2$ on the number of subsystems $j\in [m]$ for which a circuit of product tensors can have $d_j >1$. This bound improves those obtained in~\cite[Theorem 1.1]{ballico2020linearly} and~\cite[Lemma 4.5]{Ballico:2018aa}, and is sharp (see Section~\ref{conjecture_kruskal}).

\begin{cor}\label{linincor}
\sloppy
If $\{{x_a}: a \in [n] \}$ forms a circuit, then ${d_j >1}$ for at most $n-2$ indices $j \in [m]$.
\end{cor}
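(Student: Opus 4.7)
The plan is to apply the splitting theorem (Theorem~\ref{conjecture}) directly to $E = \{x_a : a \in [n]\}$. Two elementary facts about a circuit are needed. First, $\dim\spn(E) = n - 1$: the defining linear dependence gives $\dim\spn(E) \leq n - 1$, while any $(n-1)$-subset is linearly independent by the circuit property, yielding equality. Second, $E$ is connected. Indeed, suppose for contradiction that some non-empty proper subset $S \subseteq E$ separates $E$, so that $\spn(S) \cap \spn(S^c) = \{0\}$. Then the (essentially unique) dependence $\sum_{a \in [n]} \alpha_a x_a = 0$ with all $\alpha_a \neq 0$ would place the vector $-\sum_{a \in S} \alpha_a x_a = \sum_{a \in S^c} \alpha_a x_a$ in $\spn(S) \cap \spn(S^c)$, forcing $\sum_{a \in S} \alpha_a x_a = 0$ and contradicting the linear independence of the proper subset $\{x_a : a \in S\}$.

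With these two observations, the contrapositive of Theorem~\ref{conjecture} immediately gives $\dim\spn(E) > \sum_{j=1}^m (d_j - 1)$, hence $\sum_{j=1}^m (d_j - 1) \leq n - 2$. Since each index $j \in [m]$ with $d_j > 1$ contributes at least $1$ to this sum, at most $n - 2$ such indices can exist, which is the desired conclusion. There is no real obstacle here: the splitting theorem does all the heavy lifting, and the only subtlety is the matroid-theoretic fact that a circuit is connected, handled by the short argument above. Equivalently, one can shortcut through Corollary~\ref{original_conjecture} applied to a rescaled circuit $\{\alpha_a x_a : a \in [n]\}$ (whose sum is $0$): if the hypothesis $n \leq \sum_j(d_j - 1) + 1$ held, the corollary would produce a proper nonempty $S \subseteq [n]$ with $\sum_{a \in S} \alpha_a x_a = 0$, again contradicting the circuit property and yielding the same bound.
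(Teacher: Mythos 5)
Your proof is correct and follows the paper's primary route: the paper likewise deduces the corollary from the splitting theorem (via Corollary~\ref{original_conjecture}) together with the matroid-theoretic fact that circuits are connected. The only difference is cosmetic — you apply Theorem~\ref{conjecture} directly after computing $\dim\spn(E)=n-1$, and you spell out the (standard, but omitted in the paper) verification that a circuit is connected; both give the same bound $\sum_{j=1}^m(d_j-1)\le n-2$.
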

\begin{proof}
This follows immediately from Corollary~\ref{original_conjecture}, since circuits are connected. Alternatively, this follows from the second paragraph in the statement of Corollary~\ref{conjecture1}, since for any circuit it holds that $\sum_{a \in S} x_a \neq 0$ for all $S \subseteq [n]$ with $1 \leq \abs{S} \leq n-1$.
\end{proof}

As an immediate consequence of Corollary~\ref{linincor}, a sum of two product tensors is again a product tensor if and only if $d_j >1$ for at most a single subsystem index $j \in [m]$ (see Corollary~15 in \cite{tensor2}). This statement is well-known. In particular, it was used in \cite{westwick1967, johnston2011characterizing} to characterize the invertible linear operators that preserve the set of product tensors. In~\cite{lovitz2021decomposable,tensor2} the first author used this statement to study decomposable correlation matrices, and observed that it directly provides an elementary proof of a recent result in quantum information theory~\cite{PhysRevA.95.032308} (see Corollary~16 in~\cite{tensor2}).

\subsection{Uniqueness results for non-rank decompositions}\label{non-rank}

In this subsection we prove uniqueness results for decompositions of $\sum_{a \in [n]} x_a$ into ${r\geq n+1}$ product tensors. Namely, we provide conditions on $\{x_a : a \in [n]\}$ for which whenever $\sum_{a \in [n]} x_a = \sum_{a \in [r]} y_a$ for some multiset of product tensors $\{y_a : a \in [r]\}$, this pair of decompositions has an $(s,l)$-subpartition. In particular, for $s=1$ we obtain sufficient conditions for the existence of subsets $Q \subseteq [n]$, $R\subseteq [r]$ of size $\abs{Q}=\abs{R}=l$ for which $\{x_a : a\in Q\}=\{y_a : a \in R\}$. We refer the reader also to Section~\ref{symmetric_non-rank}, in which we prove uniqueness results on non-Waring rank decompositions of symmetric tensors, and identify applications of our non-rank uniqueness results.

In Theorem~\ref{k_arbitrary} we give sufficient conditions for which whenever $(\sum_{a \in [n]} x_a, \sum_{a \in [r]} y_a)$ is irreducible, it has an $(s,l)$-subpartition. We then observe that for $s=1$ we can drop the irreducibility assumption and obtain the result described in the previous paragraph. We then prove a modified version of Theorem~\ref{k_arbitrary}, which drops the irreducibility assumption for arbitrary $s\in [n-1]$. At the end of this subsection, we review these statements in the $s=n-1$ case.

\begin{theorem}\label{k_arbitrary}
Let $n \geq 2$, $q \in [n-1]$, $s \in [q]$, and $r$ be positive integers for which
\begin{align}\label{k_arbitrary_r_inequality}
n+1 \leq r \leq n+\Bigceil{\frac{n-q}{s}},
\end{align}
and let $l=\floor{q/s}$. If for every subset $S \subseteq [n]$ for which $s+1\leq \abs{S} \leq n,$ it holds that
\begin{align}\label{k_arbitrary_inequality}
2\abs{S}+\max\left\{0,(r-n)-\biggceil{\frac{n-q+s}{\abs{S}}}+1\right\} \leq \sum_{j=1}^m (d_j^S -1)+1,
\end{align}
then for any multiset of product tensors $\{y_a : a \in [r]\}\subseteq \pro{\V_1 : \dots : \V_m}$ for which ${\sum_{a \in [n]} x_a= \sum_{a \in [r]} y_a}$ and $(\sum_{a \in [n]} x_a,\sum_{a \in [r]} y_a)$ is irreducible, this pair of decompositions has an $(s, l)$-subpartition.
\end{theorem}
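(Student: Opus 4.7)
The plan is to follow the strategy used in the proofs of Theorem~\ref{k-gen} and Theorem~\ref{hakyegen}: introduce $x_{n+a} := -y_a$ for $a \in [r]$ so that $\sum_{a \in [n+r]} x_a = 0$, partition $[n+r]$ into the index sets $T_1 \sqcup \dots \sqcup T_t$ of the connected components of $\{x_a : a \in [n+r]\}$, and use irreducibility together with the splitting theorem to constrain these components. For each $p$ I write $n_p := |T_p \cap [n]|$, $r_p := |T_p \cap ([n+r] \setminus [n])|$, and $e_p := r_p - n_p$. Because $\sum_{a \in T_p} x_a = 0$ on every component, irreducibility of $(\sum_{[n]} x_a, \sum_{[r]} y_a)$ forces $n_p \leq r_p$, so $e_p \geq 0$ and $\sum_p e_p = r - n =: E$. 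I will call a component \emph{nice} if $n_p = r_p \in [1,s]$; such a component yields a valid piece $(Q,R) = (T_p \cap [n], \{a \in [r] : n+a \in T_p\})$ of an $(s,l)$-subpartition, so the goal is to exhibit at least $l = \floor{q/s}$ nice components.

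The first step is to apply Corollary~\ref{original_conjecture} to $\{x_a : a \in T_p\}$ with $S := T_p \cap [n]$. When $|S| = n_p \geq s+1$, connectedness of $T_p$ forces $|T_p| > \sum_j (d_j^{T_p} - 1) + 1 \geq \sum_j (d_j^S - 1) + 1$, which combined with the hypothesis~\eqref{k_arbitrary_inequality} gives
\begin{align}
n_p + r_p = |T_p| > 2 n_p + \max\bigl\{0,\, E - L_p + 1\bigr\}, \qquad L_p := \bigceil{(n - q + s)/n_p},
\end{align}
and rearranging yields $e_p \geq \max\{1,\, E - L_p + 2\}$. The case $L_p = 1$ (equivalently $n_p \geq n-q+s$) would force $e_p \geq E+1$, impossible since $\sum_{p'} e_{p'} = E$; hence every component with $n_p \geq s+1$ satisfies $L_p \geq 2$ and $n_p \leq n-q+s-1$. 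Combining $L_p - 1 \geq E - e_p + 1$ with the strict inequality $n_p(L_p - 1) < n - q + s$ then produces the key per-component bound
\begin{align}
n_p\bigl(E - e_p + 1\bigr) \leq n - q + s - 1.
\end{align}

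The final step is a combinatorial accounting. I partition the non-nice components with $n_p \geq 1$ into $C_1$ (those with $1 \leq n_p \leq s$, hence $e_p \geq 1$) and $C_2$ (those with $n_p \geq s+1$, hence $e_p \geq 1$), set $F_i := \sum_{p \in C_i} e_p$, and note $F_1 + F_2 \leq E$. Clearly $\sum_{C_1} n_p \leq s|C_1| \leq sF_1$. For $C_2$, the per-component bound gives $\sum_{C_2} n_p \leq (n-q+s-1) \sum_{C_2} 1/(E-e_p+1)$, and the elementary rearrangement of $(e_p - 1)(E - e_p) \geq 0$ (valid for $1 \leq e_p \leq E$) yields $1/(E-e_p+1) \leq e_p/E$, so $\sum_{C_2} n_p \leq (n-q+s-1) F_2/E$. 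The expression $sF_1 + (n-q+s-1)F_2/E$ is linear in $(F_1,F_2)$ over $F_1 + F_2 \leq E$, so its maximum occurs at a vertex; both $(E,0)$ and $(0,E)$ give values at most $n-q+s-1$, using $Es \leq s\ceil{(n-q)/s} \leq n-q+s-1$ for the first. Hence $\sum_{C_1 \cup C_2} n_p \leq n-q+s-1$, so the nice components collectively cover at least $q-s+1$ elements of $[n]$; since each has $n_p \leq s$, there are at least $\ceil{(q-s+1)/s} = \floor{q/s} = l$ of them, and picking any $l$ yields the desired $(s,l)$-subpartition.

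The step that will require the most care is the combinatorial accounting, because components of several qualitatively distinct types contribute simultaneously to the $n$-total and to the excess budget $E$, and the target bound $n-q+s-1$ is tight. The single algebraic identity $1/(E-e_p+1) \leq e_p/E$ is what collapses the sum over $C_2$ to something controlled by $F_2/E$; without it the reduction to a vertex maximum of a linear expression will not close.
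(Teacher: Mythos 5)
Your proof is correct, and it takes a genuinely different route from the paper's. The paper orders the components $T_p$ by $|T_p\cap[n]| - |T_p\cap([n+r]\setminus[n])|$, lets $\tilde l$ be the number of ``nice'' components at the front, assumes toward contradiction that $\tilde l < l$, and then locates (via pigeonhole, Claim~\ref{pigeonhole_claim}) a single non-nice component large enough that a counting bound (Claim~\ref{inequality2_claim}) forces it to split, a contradiction. You instead run a direct budget argument on the excess $e_p = r_p - n_p$: after deriving from the splitting theorem and~\eqref{k_arbitrary_inequality} the per-component bound $n_p(E-e_p+1)\leq n-q+s-1$ for each $p$ with $n_p\geq s+1$, you convert the constraint $\sum_p e_p = E$ into an upper bound on $\sum n_p$ over all non-nice components. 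The crucial ingredient that replaces the paper's pigeonhole step is the algebraic inequality $1/(E-e_p+1)\leq e_p/E$ (valid for $1\leq e_p\leq E$), which linearizes the per-component bound in $e_p$ and lets you optimize $sF_1+(n-q+s-1)F_2/E$ over the simplex $F_1+F_2\leq E$. Both approaches are ultimately counting arguments, but yours is constructive rather than by contradiction, and gives the cleaner picture that the non-nice components together can absorb at most $n-q+s-1$ indices of $[n]$, leaving at least $q-s+1$ for the nice ones. One small presentational point worth tightening: when you pass from $|T_p| > \sum_j(d_j^{T_p}-1)+1$ to $|T_p| > \sum_j(d_j^S-1)+1$, you should say explicitly that $d_j^{T_p}\geq d_j^{T_p\cap[n]}=d_j^S$, since the hypothesis~\eqref{k_arbitrary_inequality} only refers to subsets of $[n]$ while the splitting theorem applies to all of $T_p$ (including the $-y_a$'s); you do implicitly invoke this but it is the kind of step that deserves a sentence.
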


One may be concerned about whether the complicated collection of inequalities~\eqref{k_arbitrary_inequality} can ever be satisfied. The answer is yes, simply because the righthand side can depend on $m$, whereas the lefthand side does not. So for $m$ large enough, one can always find $\{x_a : a \in [n]\}$ that satisfies these inequalities. In fact, they can even be satisfied non-trivially for $m=3$, as we observe in Example~\ref{identity_example}.

\begin{proof}[Proof of Theorem~\ref{k_arbitrary}]
For each $a \in [r]$, let $x_{n+a}=-y_a$, and let $T_1 \sqcup \dots \sqcup T_t=[n+r]$ be the index sets of the decomposition of $\{x_a : a \in [n+r]\}$ into connected components. Note that for each $p \in [t]$, it must hold that
\begin{align}
\bigabs{T_p \cap [n+r]\setminus [n]}\geq \bigabs{T_p \cap [n]},
\end{align}
otherwise we would contradict irreducibility. For each $p \in [t]$, if
\begin{align}
\bigabs{T_p \cap [n+r]\setminus [n]}= \bigabs{T_p \cap [n]},
\end{align}
then $\bigabs{T_p \cap [n]} \leq s$, otherwise $\{ x_a : a \in T_p\}$ would split by~\eqref{k_arbitrary_inequality} and Corollary~\ref{original_conjecture}. Assume without loss of generality that
\begin{align}
\bigabs{T_1 \cap [n]}-\bigabs{T_1 \cap [n+r]\setminus [n]} &\geq \bigabs{T_2 \cap [n]}-\bigabs{T_2 \cap [n+r]\setminus [n]}\\
 &\;\; \vdots \\
 & \geq \bigabs{T_{{t}} \cap [n]}-\bigabs{T_{{t}} \cap [n+r]\setminus [n]}.
\end{align}
If
\begin{align}
\bigabs{T_{1}\cap [n]} = \bigabs{T_{1} \cap [n+r]\setminus [n]},
\end{align}
then let $\tilde{l}\in [t]$ be the largest integer for which
\begin{align}\label{equality}
\bigabs{T_{\tilde{l}}\cap [n]} = \bigabs{T_{\tilde{l}} \cap [n+r]\setminus [n]}.
\end{align}
Otherwise, let $\tilde{l}=0$. Then for all $p \in [t]\setminus [\tilde{l}]$ it holds that
\begin{align}\label{strict_inequality}
\abs{T_p \cap [n]} < \abs{T_p \cap [n+r]\setminus [n]}
\end{align}
(recall that we define $[0]=\{\}$). To complete the proof, we will show that $\tilde{l} \geq l$, for then we can take $Q_p=T_p \cap [n]$ and $R_p=T_p \cap [n+r]\setminus [n]$ for all $p \in [l]$ to conclude.

Suppose toward contradiction that $\tilde{l}< l$. We require the following two claims:
\begin{claim}\label{pigeonhole_claim}
It holds that $\tilde{l}<t$, $\bigceil{\frac{n- s \tilde{l} }{t-\tilde{l}}} \geq s+1$, and there exists $p \in [t]\setminus [\tilde{l}]$ for which
\begin{align}\label{pigeonhole}
\bigabs{T_p \cap [n]} \geq \biggceil{\frac{n- s \tilde{l} }{t-\tilde{l}}}.
\end{align} 
\end{claim}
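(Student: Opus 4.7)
The plan is to execute a double-counting argument that exploits the dichotomy between indices $p \in [\tilde{l}]$, where the equality $\abs{T_p \cap [n]} = \abs{T_p \cap ([n+r]\setminus [n])} \leq s$ has already been established, and indices $p \in [t] \setminus [\tilde{l}]$, where the strict inequality $\abs{T_p \cap [n]} < \abs{T_p \cap ([n+r]\setminus [n])}$ holds. Part~1 would be handled first: if $\tilde{l} = t$, then summing the equalities $\abs{T_p \cap [n]} = \abs{T_p \cap ([n+r]\setminus [n])}$ over all $p \in [t]$ gives $n = r$, contradicting the lower bound $r \geq n+1$ in~\eqref{k_arbitrary_r_inequality}. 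Hence $\tilde{l} < t$.

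Next I would establish two counting inequalities that drive parts~2 and~3. The bound $\abs{T_p \cap [n]} \leq s$ on $[\tilde{l}]$ immediately gives
\begin{align}
\sum_{p \in [t] \setminus [\tilde{l}]} \abs{T_p \cap [n]} \geq n - s \tilde{l}.
\end{align}
Moreover, for $p \in [t] \setminus [\tilde{l}]$, the strict integer inequality strengthens to $\abs{T_p \cap ([n+r]\setminus [n])} \geq \abs{T_p \cap [n]} + 1$, and summing all contributions to $r$ yields $r \geq n + (t - \tilde{l})$, i.e., $t - \tilde{l} \leq r - n$. Part~3 then follows immediately by pigeonhole: among $t - \tilde{l}$ non-negative terms summing to at least $n - s \tilde{l}$, some $p$ must satisfy $\abs{T_p \cap [n]} \geq \ceil{(n - s \tilde{l})/(t - \tilde{l})}$.

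The main obstacle will be part~2, the inequality $\ceil{(n - s \tilde{l})/(t - \tilde{l})} \geq s + 1$. I would argue by contradiction, assuming $n - s\tilde{l} \leq s(t - \tilde{l})$, then chaining $t - \tilde{l} \leq r - n$ and the assumption $\tilde{l} \leq l - 1 \leq q/s - 1$ (from $l = \floor{q/s}$) to obtain $n \leq s(r-n) + q - s$. Rearranging gives $(n-q)/s + 1 \leq r - n$. The delicate step is the upgrade from this rational inequality to $r - n \geq \ceil{(n-q)/s} + 1$, which contradicts the upper bound $r \leq n + \ceil{(n-q)/s}$ in~\eqref{k_arbitrary_r_inequality}; this upgrade uses the identity $\ceil{x + 1} = \ceil{x} + 1$ together with integrality of $r - n$. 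Everything else in the plan is routine bookkeeping.
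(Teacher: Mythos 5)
Your proof is correct and follows essentially the same approach as the paper. The paper first establishes $n > st$ by showing that $n \leq st$ forces $r \geq n + (n-q)/s + 1 > n + \lceil (n-q)/s\rceil$, contradicting~\eqref{k_arbitrary_r_inequality}; it then derives all three parts of the claim from $n > st$. Your plan reorganizes this slightly — you get $\tilde{l} < t$ more directly from $r \geq n+1$ (summing the equalities over $[\tilde{l}]=[t]$ would force $n = r$), which is a small streamlining, and then your part-2 contradiction is exactly the paper's $n > st$ argument re-run in place, with the identical chain $t - \tilde{l} \leq r - n$, $s\tilde{l} \leq q - s$, and the integrality upgrade via $\lceil x + 1\rceil = \lceil x\rceil + 1$. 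The pigeonhole for part 3 is identical. No gaps.
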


\begin{claim}\label{inequality2_claim}
For all $p \in [t] \setminus [\tilde{l}]$, it holds that
\begin{align}\label{inequality2}
\bigabs{T_p \cap [n+r]\setminus [n]} \leq \bigabs{T_p \cap [n]} +r-n+\tilde{l} -t+1
\end{align}
\end{claim}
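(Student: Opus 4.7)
The plan is to prove Claim~\ref{inequality2_claim} by a direct double-counting argument, exploiting the strict inequality~\eqref{strict_inequality} for the indices $p\in [t]\setminus[\tilde{l}]$ and the equality~\eqref{equality} for the indices $p\in[\tilde{l}]$. The connected-component structure of the splitting theorem plays no role here; the only facts I will use are that $T_1 \sqcup \dots \sqcup T_t = [n+r]$ (so the pieces $T_p \cap [n]$ and $T_p \cap ([n+r]\setminus[n])$ partition $[n]$ and $[n+r]\setminus[n]$ respectively, summing to $n$ and $r$) together with the ordering chosen above the claim.

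Fix any $p_0 \in [t]\setminus[\tilde{l}]$. First I would split the identity $\sum_{p\in [t]} \bigabs{T_p \cap [n+r]\setminus [n]} = r$ into three groups: the indices $p \in [\tilde{l}]$, the single index $p_0$, and the remaining indices $p \in [t]\setminus([\tilde{l}]\cup\{p_0\})$. On the first group I apply the equality~\eqref{equality}, replacing each $\bigabs{T_p \cap [n+r]\setminus [n]}$ by $\bigabs{T_p \cap [n]}$. On the third group I apply the strict inequality~\eqref{strict_inequality} (recall that by integrality, $\bigabs{T_p\cap [n+r]\setminus[n]} \geq \bigabs{T_p \cap [n]}+1$ for $p\in [t]\setminus[\tilde{l}]$).

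Combining these substitutions yields
\begin{align}
r &\geq \sum_{p \in [\tilde{l}]} \bigabs{T_p \cap [n]} + \bigabs{T_{p_0}\cap [n+r]\setminus[n]} + \sum_{p \in [t]\setminus([\tilde{l}]\cup\{p_0\})} \bigl(\bigabs{T_p \cap [n]}+1\bigr).
\end{align}
The sum $\sum_{p\in [\tilde{l}]}\bigabs{T_p\cap[n]} + \sum_{p \in [t]\setminus([\tilde{l}]\cup\{p_0\})}\bigabs{T_p\cap[n]}$ telescopes to $n-\bigabs{T_{p_0}\cap [n]}$, while the $+1$ terms contribute $t-\tilde{l}-1$ (note that $\tilde{l}<t$ by Claim~\ref{pigeonhole_claim}, so this count is non-negative). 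Substituting and rearranging gives exactly
\begin{align}
\bigabs{T_{p_0} \cap [n+r]\setminus [n]} \leq \bigabs{T_{p_0} \cap [n]} + r - n + \tilde{l} - t + 1,
\end{align}
as required. I do not foresee any real obstacle: the only subtle point is ensuring the bookkeeping of the three groups is consistent (in particular, handling the edge cases $\tilde{l}=0$ and $t=\tilde{l}+1$), and both are handled automatically since empty sums contribute zero.
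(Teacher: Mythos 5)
Your proof is correct and takes essentially the same route as the paper's: both decompose the identity $\sum_{p\in[t]}\bigabs{T_p\cap[n+r]\setminus[n]}=r$, apply the equality~\eqref{equality} on $[\tilde{l}]$ and the strict inequality~\eqref{strict_inequality} (in the form $\geq+1$) on the remaining indices, and invoke $\sum_p\bigabs{T_p\cap[n]}=n$. The only difference is presentational: the paper phrases the bookkeeping as a proof by contradiction ($r\geq r+1$), while you rearrange directly; the underlying computation is identical.
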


Before proving these claims, we first use them to complete the proof of the theorem. Let $p \in [t]\setminus [\tilde{l}]$ be as in Claim~\ref{pigeonhole_claim}. Then,
\begin{align}
\abs{T_p} &= \bigabs{T_p \cap [n]}+\bigabs{T_p \cap [n+r] \setminus [n]}\\
		&\leq 2 \bigabs{T_p \cap [n]}+ r-n+\tilde{l} -t+1\\
		&\leq 2 \bigabs{T_p \cap [n]}+ r-n - \biggceil{\frac{n-s \tilde{l}}{\bigabs{T_p \cap [n]}}}+1\\
		&\leq 2 \bigabs{T_p \cap [n]}+ r-n- \biggceil{\frac{n-q+s}{\bigabs{T_p \cap [n]}}}+1\\
		& \leq \sum_{j=1}^m (d_j^{T_p \cap [n]}-1)+1,
\end{align}
where the first line is obvious, the second follows from Claim~\ref{inequality2_claim}, the third follows from Claim~\ref{pigeonhole_claim}, the fourth follows from $\tilde{l}< l$, and the fifth follows from~\eqref{k_arbitrary_inequality} and the fact that $\bigabs{T_p \cap [n] } \geq s+1$. So $\{x_a : a \in T_p\}$ splits, a contradiction. This completes the proof, modulo proving the claims.

\begin{proof}[Proof of Claim~\ref{pigeonhole_claim}]
\renewcommand\qedsymbol{$\triangle$}
To prove the claim, we first observe that $n>st$. Indeed, if $n\leq st$ then
\begin{align}
r & = \sum_{p=1}^t \bigabs{T_p \cap [n+r]\setminus [n]}\\
&\geq n+t-\tilde{l} \\
&\geq n+\frac{n-q}{s}+1,
\end{align}
where the first line is obvious, the second follows from~\eqref{equality} and~\eqref{strict_inequality}, and the third follows from $n\leq st$ and $\tilde{l}<l$. This contradicts~\eqref{k_arbitrary_r_inequality}, so it must hold that $n>st$.

Note that $\tilde{l}<t$, for otherwise we would have $n \leq st$ by the fact that $\bigabs{T_p \cap [n]} \leq s$ for all $p \in [\tilde{l}]$. To verify that $\bigceil{\frac{n- s \tilde{l} }{t-\tilde{l}}} \geq s+1$, it suffices to prove $\frac{n- s \tilde{l} }{t-\tilde{l}} > s,$ which follows from $n>st$. To verify~\eqref{pigeonhole}, since $\bigabs{T_p \cap [n]} \leq s$ for all $p \in [\tilde{l}]$, by the pigeonhole principle there exists $p \in [t]\setminus [\tilde{l}]$ for which
\begin{align}
\bigabs{T_p \cap [n]} \geq \biggceil{\frac{n- s \tilde{l} }{t-\tilde{l}}}.
\end{align}
This proves the claim.
\end{proof}
\begin{proof}[Proof of Claim~\ref{inequality2_claim}]
\renewcommand\qedsymbol{$\triangle$}
Suppose toward contradiction that the inequality~\eqref{inequality2} does not hold for some $\tilde{p}\in [t] \setminus [\tilde{l}]$. Then
\begin{align}
r &= \sum_{p=1}^t \bigabs{T_p \cap [n+r]\setminus [n]}\\
& \geq \sum_{p\neq \tilde{p}} \bigabs{T_p \cap [n+r]\setminus [n]} + \bigabs{T_{\tilde{p}} \cap[n]} + (r-n)+\tilde{l}-t+2\\
&\geq r+1,
\end{align}
where the first two lines are obvious, and the last line follows from~\eqref{equality} and~\eqref{strict_inequality}, a contradiction.
\end{proof}
The proofs of Claims~\ref{pigeonhole_claim} and~\ref{inequality2_claim} complete the proof of the theorem.
\end{proof}

\subsubsection{$s=1$ case of Theorem~\ref{k_arbitrary}}

In the $s=1$ case of Theorem~\ref{k_arbitrary}, we can drop the assumption that the pair of decompositions is irreducible. This is because the other assumptions already imply that $\sum_{a \in [n]} x_a$ constitutes a (unique) tensor rank decomposition by Theorem~\ref{k-gen}, so $\sum_{a \in [n]} x_a=\sum_{a \in [r]} y_a$ will automatically be irreducible (see the discussion at the beginning of Section~\ref{interpolate}).

\begin{cor}[$s=1$ case of Theorem~\ref{k_arbitrary}]\label{k_arbitrarys1}
Let $q \in [n-1]$ and $r$ be positive integers for which $n+1 \leq r \leq 2n-q$. If for every subset $S \subseteq [n]$ with $2\leq \abs{S} \leq n$ it holds that
\begin{align}\label{nonrank_inequality} 
2\abs{S}+\max\left\{0,(r-n)-\biggceil{\frac{n-q+1}{\abs{S}}}+1\right\} \leq \sum_{j=1}^m (d_j^S -1)+1,
\end{align}
then for any multiset of product tensors $\{y_a : a \in [r]\}\subseteq \pro{\V_1 : \dots : \V_m}$ for which $\sum_{a \in [n]} x_a= \sum_{a \in [r]} y_a$, there exist subsets $Q \subseteq [n]$ and $R \subseteq [r]$ of size $\abs{Q}=\abs{R}=q$ for which ${\{x_a : a \in Q\}=\{y_a : a \in R\}}$ (in other words, this pair of decompositions has a $(1,q)$-subpartition).
\end{cor}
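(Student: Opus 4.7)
My plan is to deduce Corollary~\ref{k_arbitrarys1} from the $s=1$ case of Theorem~\ref{k_arbitrary} by showing that the irreducibility hypothesis of that theorem is automatic under the stronger inequality~\eqref{nonrank_inequality}. The key first step is to observe that the $\max\{0,\cdot\}$ term in~\eqref{nonrank_inequality} is non-negative, so the hypothesis always implies the weaker inequality $2\abs{S} \leq \sum_{j=1}^m (d_j^S-1)+1$ for every $S \subseteq [n]$ with $2 \leq \abs{S} \leq n$. This is exactly the hypothesis of Theorem~\ref{k-gen}, which then gives that $\sum_{a \in [n]} x_a$ is the unique tensor rank decomposition of $v := \sum_{a \in [n]} x_a$; in particular $\rank(v)=n$.

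Next I would establish irreducibility of the pair $(\sum_{a \in [n]} x_a, \sum_{a \in [r]} y_a)$. Suppose toward contradiction that there exist $Q \subseteq [n]$ and $R \subseteq [r]$ with $\abs{Q} > \abs{R}$ and $\sum_{a \in Q} x_a = \sum_{a \in R} y_a$. Substituting yields $v = \sum_{a \in [n]\setminus Q} x_a + \sum_{a \in R} y_a$, a decomposition of $v$ into $\abs{R}+(n-\abs{Q}) < n$ product tensors, contradicting $\rank(v)=n$.

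With irreducibility in hand, Theorem~\ref{k_arbitrary} applies with $s=1$ and $l=\floor{q/1}=q$; the range condition $n+1 \leq r \leq n+\ceil{(n-q)/1}=2n-q$ matches our assumption. The theorem produces pairwise disjoint $Q_1,\dots,Q_q \subseteq [n]$ and pairwise disjoint $R_1,\dots,R_q \subseteq [r]$ with $\max\{1,\abs{R_p}\} \leq \abs{Q_p} \leq 1$ and $\sum_{a \in Q_p} x_a = \sum_{a \in R_p} y_a$ for every $p \in [q]$. The bounds force $\abs{Q_p}=1$; writing $Q_p=\{a_p\}$, the vector $x_{a_p}$ is a (non-zero) product tensor, so the right-hand side must be non-zero as well, forcing $\abs{R_p}=1$. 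Hence each $R_p=\{b_p\}$ with $x_{a_p}=y_{b_p}$, and setting $Q=\{a_1,\dots,a_q\}$, $R=\{b_1,\dots,b_q\}$ gives subsets of size $q$ with $\{x_a : a \in Q\}=\{y_a : a \in R\}$ as multisets.

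The only real obstacle here is recognizing that irreducibility comes for free, but once one notices that~\eqref{nonrank_inequality} is strictly stronger than the hypothesis of the generalization of Kruskal's theorem, this is essentially immediate. All of the combinatorial heavy lifting is already packaged into Theorem~\ref{k_arbitrary}, so the $s=1$ reduction amounts to a clean application of that result combined with the non-zero product tensor convention.
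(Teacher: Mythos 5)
Your proof is correct and follows the paper's intended route: observe that~\eqref{nonrank_inequality} subsumes the hypothesis of Theorem~\ref{k-gen}, conclude that $\sum_{a\in[n]} x_a$ is a tensor rank decomposition and hence that the pair is automatically irreducible, and then apply Theorem~\ref{k_arbitrary} with $s=1$. Your explicit verification of irreducibility and of the fact that $|Q_p|=1$ forces $|R_p|=1$ simply spells out remarks the paper makes in the discussion at the start of Section~\ref{interpolate}, so the argument is the same one the authors had in mind.
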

It is worth noting that although the assumptions of Corollary~\ref{k_arbitrarys1} require $\sum_{a \in [n]} x_a$ to constitute a unique tensor rank decomposition, this result can also be applied to arbitrary decompositions $\sum_{a \in [n]} x_a$, provided that $\sum_{a\in S} x_a$ constitutes a unique tensor rank decomposition for some subset $S\subseteq [n]$ with $2 \leq \abs{S}\leq n$, as one can simply apply Corollary~\ref{k_arbitrarys1} to the pair of decompositions $(\sum_{a \in S} x_a , \sum_{a\in [r]} y_a- \sum_{a\in [n]\setminus S} x_a)$. It is not difficult to produce explicit examples in which Corollary~\ref{k_arbitrarys1} can be applied in this way (for instance, by modifying Example~\ref{identity_example}).

As an example, we now use Corollary~\ref{k_arbitrarys1} to prove uniqueness of non-rank decompositions of the \textit{identity tensor} $\sum_{a \in [n]} e_a^{\otimes 3}$.
\begin{example}\label{identity_example}
Let $n \geq 2$, $q \in [n-1],$ and $r$ be positive integers for which $n+1 \leq r \leq 2n-q$ and
\begin{align}\label{example_inequality}
q \leq n+1-\frac{1}{4}\left((r-n+2)^2+1\right).
\end{align}
If
\begin{align}
\sum_{a \in [n]} e_a ^{\otimes 3}=\sum_{a \in [r]} y_a
\end{align}
for some multiset of product tensors $\{y_a : a \in [r]\}\subseteq \pro{\V_1 : \V_2 : \V_3}$, then there exist subsets $Q \subseteq [n]$ and $R \subseteq [n+r]$ of sizes $\abs{Q}=\abs{R}=q$ such that ${\{x_a : a \in Q\} = {\{y_a : a \in R\}}}$. For example, if $r=n+1$ then we can take $q= n-2$ for any $n \geq 3$.
\end{example}

To verify Example~\ref{identity_example}, it suffices to show that the inequality~\eqref{nonrank_inequality} holds for all $S \subseteq [n]$ with $2 \leq \abs{S} \leq n$. This reduces to proving that
\begin{align}
\abs{S}(r-n+2-\abs{S}) - (n-q+1)<0,	
\end{align}
which occurs whenever the polynomial in $\abs{S}$ on the lefthand side has no real roots, i.e. whenever 
\begin{align}
(r-n+2)^2 \leq 4(n-q+1)-1.
\end{align}
\subsubsection{Modifying Theorem~\ref{k_arbitrary} to apply to reducible pairs of decompositions}
A drawback to Theorem~\ref{k_arbitrary} is that it only applies to irreducible pairs of decompositions. We now present a modification of this result, which can certify the existence of an $(s,l)$-subpartition even for reducible decompositions, at the cost of stricter assumptions. We defer this proof to the appendix, as it is very similar to that of Theorem~\ref{k_arbitrary}.
\begin{theorem}\label{k_arbitrary_old}
Let $q \in [n-1]$, $s \in [q]$, and $r$ be positive integers for which
\begin{align}\label{eq:k_arbitrary_old}
n+1 \leq r \leq \biggceil{\left(\frac{s+1}{s}\right)(n-q+s)}-1,
\end{align}
and let $l=\floor{q/s}$. If for every subset $S \subseteq [n]$ for which $s+1\leq \abs{S} \leq n,$ it holds that
\begin{align}
2\abs{S}+\max\left\{0,(r-n+q-s)-\biggceil{\frac{n-q+s}{\abs{S}}}+1\right\} \leq \sum_{j=1}^m (d_j^S -1)+1,
\end{align}
then for any multiset of product tensors $\{y_a : a \in [r]\}\subseteq \pro{\V_1 : \dots : \V_m}$ for which $\sum_{a \in [n]} x_a= \sum_{a \in [r]} y_a$, this pair of decompositions has an $(s,l)$-subpartition.
\end{theorem}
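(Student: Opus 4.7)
The proof will closely mirror that of Theorem~\ref{k_arbitrary}, replacing the role of irreducibility by the strengthened hypothesis. Set $x_{n+a} = -y_a$ for $a \in [r]$, partition $[n+r]$ into the index sets $T_1, \dots, T_t$ of the connected components of $\{x_a : a \in [n+r]\}$, and write $a_p = \abs{T_p \cap [n]}$ and $b_p = \abs{T_p \cap ([n+r] \setminus [n])}$. Sort the components so that $a_p - b_p$ is non-increasing, and define $\tilde l$ to be the largest index with $a_{\tilde l} \geq b_{\tilde l}$ (with $\tilde l = 0$ if none exists). The goal is to show $\tilde l \geq l$, since then the subsets $Q_p = T_p \cap [n]$ and $R_p = \{a : n+a \in T_p\}$ for $p \in [l]$ form the required $(s,l)$-subpartition.

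A first step will verify that $a_p \leq s$ for every $p \leq \tilde l$. Indeed, if some such $p$ had $a_p \geq s+1$, then applying the hypothesis to $S = T_p \cap [n]$ would give $\abs{T_p} = a_p + b_p \leq 2 a_p \leq \sum_{j=1}^m (d_j^S-1)+1$ (the first inequality uses $b_p \leq a_p$), and Corollary~\ref{original_conjecture} would split $T_p$, contradicting connectedness.

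Next, assume toward contradiction that $\tilde l \leq l-1$, so $s \tilde l \leq s(l-1) \leq q-s$. I plan to prove two claims paralleling Claims~\ref{pigeonhole_claim} and~\ref{inequality2_claim}. The first asserts $t > \tilde l$, $\bigceil{(n - s \tilde l)/(t - \tilde l)} \geq s+1$, and the existence of some $\tilde p > \tilde l$ with $a_{\tilde p} \geq \bigceil{(n - s \tilde l)/(t - \tilde l)}$. The key fact $n > st$ that powers the $\geq s+1$ bound is obtained by noting that if $n \leq st$ then $r \geq (n - s\tilde l) + (t - \tilde l) \geq n - q + s + \bigceil{(n-q+s)/s} \geq \bigceil{((s+1)/s)(n-q+s)}$, contradicting~\eqref{eq:k_arbitrary_old}. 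The modified second claim reads $b_p - a_p \leq r - n + s \tilde l - (t - \tilde l) + 1$ for every $p > \tilde l$; it is a re-accounting of Claim~\ref{inequality2_claim}, but with the equality $b_p = a_p$ for $p \leq \tilde l$ replaced by the weaker $b_p \geq 0$, and this lossier bound is exactly what introduces the extra $s \tilde l$ term.

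Combining the two claims at $\tilde p$ and using $s \tilde l \leq q - s$ (which also yields $\bigceil{(n - s \tilde l)/a_{\tilde p}} \geq \bigceil{(n - q + s)/a_{\tilde p}}$) gives $\abs{T_{\tilde p}} \leq 2 a_{\tilde p} + (r - n + q - s) - \bigceil{(n - q + s)/a_{\tilde p}} + 1$. If the quantity inside the $\max$ in the hypothesis (evaluated at $S = T_{\tilde p} \cap [n]$) is positive, this upper bound is at most $\sum_{j=1}^m(d_j^S - 1)+1$ and Corollary~\ref{original_conjecture} splits $T_{\tilde p}$; otherwise the same bound gives $\abs{T_{\tilde p}} \leq 2 a_{\tilde p}$, i.e.\ $b_{\tilde p} \leq a_{\tilde p}$, contradicting $\tilde p > \tilde l$. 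The main subtlety, and the reason the hypothesis must be strengthened from the $r - n$ of Theorem~\ref{k_arbitrary} to $r - n + q - s$ here, is precisely this extra $s \tilde l$ slack surfacing once the equality $b_p = a_p$ on the good components is no longer available.
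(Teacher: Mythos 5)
Your proposal reproduces the paper's own argument for Theorem~\ref{k_arbitrary_old} faithfully: same decomposition into connected components, same sorting, same definition of $\tilde l$, same two claims (your restated Claim 2, $b_p - a_p \leq r - n + s\tilde l - (t - \tilde l) + 1$, is exactly the paper's $\bigabs{T_p \cap [n+r]\setminus[n]} \leq \bigabs{T_p \cap [n]} + (r-n) + (s+1)\tilde l - t + 1$), and the same final contradiction via Corollary~\ref{original_conjecture}. The only cosmetic differences are that you bound $n - s\tilde l$ and $t - \tilde l$ separately rather than treating $n + t - (s+1)\tilde l$ as a unit, and you split the final step into a positive/non-positive case on the $\max$ rather than observing that both cases feed directly into the splitting corollary; both variants are correct and equivalent in substance.
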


\subsubsection{$s=n-1$ case of Theorem~\ref{k_arbitrary_old}}

When $s=n-1$, then it necessarily holds that $r=n+1$ and $q=n-1$, and Theorem~\ref{k_arbitrary_old} simply says that if $2n+1 \leq \sum_{j=1}^m (d_j-1)+1$, then $\sum_{a \in [n]} x_a= \sum_{a \in [n+1]} y_a$ has an $(n-1,1)$-subpartition. Theorem~\ref{k_arbitrary} yields a weaker statement.

\section{A lower bound on tensor rank}\label{waring_rank}

In Section~\ref{section_s1hakyegen} we saw that for a multiset of product tensors $\{x_a : a \in [n]\}$ with k-ranks $k_j=\krank(x_{a,j} : a \in [n])$, it holds that
\begin{align}\label{k-rank_bound1}
\rank\bigg[\sum_{a \in [n]} x_a\bigg]\geq \min\bigg\{n, \sum_{j=1}^m (k_j-1)+2-n\bigg\}.
\end{align}
In this section, we prove that when the k-ranks are sufficiently balanced, two of the k-ranks $k_i,k_j$ appearing in this bound can be replaced with standard ranks $d_i,d_j$, which improves this bound when the k-ranks and ranks are not equal, and specializes to Sylvester's matrix rank inequality when $m=2$. We prove that this improved bound is independent of a different lower bound on tensor rank that we observed in Corollary~\ref{tr_lower}. We furthermore observe that this improved bound is sharp in a wide parameter regime. As a consequence, we obtain a lower bound on Waring rank, which we also prove is sharp.




\begin{theorem}[Tensor rank lower bound]\label{tensor_cor}
Let $n \geq 2$ and $m\geq 2$ be integers, let ${\V=\V_1\otimes \dots \otimes \V_m}$ be a vector space over a field $\field$, and let
\begin{align}
{E=\{x_a: a \in[n] \} \subseteq \pro{\V_1:\dots : \V_m}}
\end{align}
be a multiset of product tensors.
For each index $j \in [m]$, let $k_j=\krank(x_{a,j}: a \in [n])$ and ${d_j=\dim\spn\{x_{a,j} : a \in [n]\}}$. Define
\begin{align}\label{eq:q}
\mu=\max_{\substack{i, j \in [m]\\i \neq j}}\{d_i-k_i+d_j-k_j\}.
\end{align}
If for every index $i \in [m]$ it holds that
\begin{align}\label{balanced_k_rank}
k_i \leq \sum_{\substack{j \in [m]\\ j \neq i}} (k_j-1)+1,
\end{align}
then
\begin{align}\label{tensor_rank_inequality}
\rank\bigg[\sum_{a\in [n]} x_a\bigg] \geq \min\bigg\{n,\mu+\sum_{j=1}^m (k_j-1)+2-n\bigg\}.
\end{align}
\end{theorem}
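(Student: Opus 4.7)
The plan is to combine the assumed rank decomposition with the splitting theorem (Theorem~\ref{conjecture}) in a component-by-component dimensional accounting. Suppose toward contradiction that $\sum_{a \in [n]} x_a = \sum_{a \in [r]} y_a$ with $r$ strictly less than the claimed bound. Form the merged multiset $E = \{x_a : a \in [n]\} \cup \{-y_a : a \in [r]\}$, which satisfies $\sum_{z \in E} z = 0$ and hence $\dim\spn(E) \leq n + r - 1$. I would first decompose $E$ into its connected components $T_1, \dots, T_t$; disjointness of their spans forces $\sum_{a \in T_s} z_a = 0$ for every $s$. Applying the contrapositive of the splitting theorem to each $T_s$ and combining with $\dim\spn(T_s) \leq |T_s| - 1$ yields $|T_s| \geq \sum_{l=1}^m D_l^{T_s} - m + 2$, where $D_l^{T_s}$ denotes the $l$-th subsystem dimension inside $T_s$. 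Summing gives the master inequality $n + r \geq \sum_{l,s} D_l^{T_s} - t(m-2)$.

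Next I would select indices $i, j$ achieving the maximum in the definition of $\mu$, and use two different lower bounds on $\sum_s D_l^{T_s}$ depending on whether $l \in \{i,j\}$ or not. For every $l$, the partition $[n] = \bigsqcup_s (T_s \cap [n])$ together with subadditivity of dimension yields $\sum_s D_l^{T_s} \geq d_l$, while the definition of $k$-rank together with an elementary counting argument gives $\sum_s D_l^{T_s} \geq \sum_s \min\{|T_s \cap [n]|, k_l\} \geq k_l$ (the last step using $k_l \leq n$). Applying the first bound at $l \in \{i,j\}$ and the second at $l \notin \{i,j\}$, the master inequality in the single-component case $t=1$ collapses exactly to $r \geq d_i + d_j + \sum_{l \neq i, j}(k_l - 1) - n$, which is the claimed bound.

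The principal obstacle is the multi-component case $t \geq 2$, where the accounting above loses $(t-1)(m-2)$ and the naive inequality falls short. To close this gap I would invoke the balance condition $k_i \leq \sum_{j \neq i}(k_j-1)+1$, which ensures that for each connected component $T_s$ with enough $x$-indices the restricted $k$-ranks still satisfy a Kruskal-type inequality. Applied to the internal equation $\sum_{a \in T_s \cap [n]} x_a = \sum_{b:\, -y_b \in T_s} y_b$, this either forces $|T_s| = 2$ (a direct matching of some $x_a$ with a $-y_b$, which contributes nothing extra to the global count) or produces a strengthened per-component bound that recoups the lost $(m-2)$ at that component. An alternative route I would explore is to reshape $\V_i$ and $\V_j$ into the single subsystem $\V_i \otimes \V_j$ and run the splitting argument on the reshaped multiset; Theorem~\ref{bipartite} guarantees that the combined standard rank in this merged subsystem is at least $d_i + d_j - 1$, which delivers the $\mu$ improvement in one stroke and sidesteps the component bookkeeping entirely.
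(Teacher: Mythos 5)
Your setup — form the merged multiset $E=\{x_a\}\cup\{-y_a\}$, decompose into connected components $T_1,\dots,T_t$, apply the splitting theorem to each component and sum — matches the paper's opening moves exactly, and your $t=1$ computation is correct: $n+r \geq \sum_l D_l - m + 2 \geq d_i + d_j + \sum_{l\neq i,j}k_l - m + 2 = \mu + \sum_l(k_l-1) + 2$ rearranges to the claimed bound. The trouble is that you yourself identify the $t\geq 2$ case as the ``principal obstacle,'' and neither of your two proposed escape routes actually closes it, so the proof as written is incomplete at precisely the point where the balance condition~\eqref{balanced_k_rank} has to earn its keep.

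Concretely: your master inequality $n+r \geq \sum_{s,l}D_l^{T_s} - t(m-2)$ applied with the per-index bounds $\sum_s D_l^{T_s}\geq d_l$ (for $l\in\{i,j\}$) and $\sum_s D_l^{T_s}\geq k_l$ (otherwise) yields $n+r \geq d_i+d_j+\sum_{l\neq i,j}k_l - t(m-2)$, which falls short of the target by $(t-1)(m-2)$. Your first route — that the balance condition ``either forces $|T_s|=2$ or produces a strengthened per-component bound that recoups the lost $(m-2)$'' — does not correspond to any actual inequality; the splitting theorem is sharp per component, so there is no per-component strengthening available, and components of size $2$ do not recoup anything (they are exactly the tight case $|T_s|=\sum_l D_l^{T_s}-m+2$). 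Your second route also does not deliver: merging $\V_i$ and $\V_j$ and running the splitting theorem on the $(m-1)$-partite reshaping gives, per component, $|T_s|\geq D_{ij}^{T_s} + \sum_{l\neq i,j}D_l^{T_s}-m+3$, and Theorem~\ref{bipartite} applied to the connected bipartite marginal of $T_s$ gives $D_{ij}^{T_s}\geq D_i^{T_s}+D_j^{T_s}-1$; substituting recovers $|T_s|\geq\sum_l D_l^{T_s}-m+2$, i.e.\ the \emph{same} per-component bound as before. Reshaping therefore cannot ``sidestep the component bookkeeping''; the summed deficit is unchanged.

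What the paper actually does for $t\geq 2$ is an induction on $t$ that is not a naive sum of per-component bounds. After ordering the $T_p$ by the surplus $|T_p\cap[n]|-|T_p\cap([n+r]\setminus[n])|$ in decreasing order, if the largest surplus is $0$ then $r=n$ and the bound is trivial; otherwise $T_{[t-1]}:=T_1\sqcup\cdots\sqcup T_{t-1}$ has strictly positive surplus, so $\sum_{a\in T_{[t-1]}\cap[n]}x_a$ has deficient rank, and the k-rank lower bound~\eqref{k-rank_bound1} together with the balance hypothesis~\eqref{balanced_k_rank} forces $k_j<|T_{[t-1]}\cap[n]|$ for \emph{every} $j$. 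This is the crucial consequence of the balance condition: it guarantees the k-ranks of the restricted family $\{x_a : a\in T_{[t-1]}\cap[n]\}$ are still the global $k_j$'s and still satisfy~\eqref{balanced_k_rank}, so the induction hypothesis applies to $T_{[t-1]}$ with fewer components. One then adds back $|T_t|$ using connectedness of $T_t$ (to lower-bound $|T_t|$ by $\sum_j(d_j^{T_t\cap[n]}-1)+2$) and submodularity of the rank function to compare $d_j^{T_{[t-1]}\cap[n]}+d_j^{T_t\cap[n]}$ with $d_j$. You should replace your ``recoup the lost $(m-2)$'' heuristic with this peel-off-one-component induction; without it the argument for $t\geq 2$ is a gap.
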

Intuitively, the condition~\eqref{balanced_k_rank} ensures that the k-ranks are sufficiently balanced. This inequality is satisfied, for example, when the product tensors are symmetric. While we are unaware whether the precise inequality~\eqref{balanced_k_rank} is necessary for the lower bound~\eqref{tensor_rank_inequality} to hold, the following example illustrates that some inequality of this form must hold:

\begin{example}\label{ex:independent}
The set of product tensors
\begin{align}
E=\{e_1^{\otimes 3}, e_2^{\otimes 3}, e_3^{\otimes 3}, e_4^{\otimes 3}, e_5 \otimes (e_1+e_2)^{\otimes 2}, e_6\otimes (e_1-e_2)^{\otimes 2}\}
\end{align}
does not satisfy~\eqref{tensor_rank_inequality}. Indeed,
\begin{align}
\rank[\Sigma(E)]&=5\\
			&< q+k_1+k_2+k_3-1-n\\
			&=d_2+d_3-1\\
			&=7.
\end{align}
This example illustrates that in order for the bound~\eqref{tensor_rank_inequality} to hold, the k-ranks must be sufficiently ``balanced" in order to avoid cases such as this. In particular, some inequality resembling~\eqref{balanced_k_rank} is necessary. We remark that this example can be extended to further parameter regimes using Derksen's example~\cite{DERKSEN2013708}, and similar arguments as in Sections~\ref{sharp_tensor_bound} and~\ref{waring_non-rank_sharp}.
\end{example}

Note that when $m=2$, Theorem~\ref{tensor_cor} states that
\begin{align}
\rank\bigg[\sum_{a \in [n]} x_a\bigg] \geq d_1+d_2-n,
\end{align}
provided that $k_1=k_2$. This is Sylvester's matrix rank inequality (although Sylvester's result holds also when $k_1 \neq k_2$) \cite{horn2013matrix}.

The following example demonstrates that our two lower bounds on tensor rank in Theorem~\ref{tensor_cor} and Corollary~\ref{tr_lower} are independent.
\begin{example}
By Theorem~\ref{tensor_cor}, the sum of the set of product tensors
\begin{align}
\{e_1^{\otimes 3},e_2^{\otimes 3}, (e_1+e_2)^{\otimes 2}\otimes e_3,e_3^{\otimes 2} \otimes (e_1+e_2+e_3)\}
\end{align}
has tensor rank $4$. Note that this bound cannot be achieved with the flattening rank lower bound, nor with Corollary~\ref{tr_lower}, as the first three vectors do not satisfy~\eqref{tr_lower_eq}. Many more such examples can be obtained using the construction in Section~\ref{sharp_tensor_bound}.

Conversely, the sum of the set of product tensors
\begin{align}\{& e_1^{\otimes 3}, e_2^{\otimes 3}, e_3^{\otimes 3}, e_4^{\otimes 3},(e_2+e_3)\otimes (e_2+e_4)\otimes (e_1+e_4)\}
\end{align}
has tensor rank 5 by Corollary~\ref{tr_lower}, while Theorem~\ref{tensor_cor} only certifies that this sum has tensor rank at least $4$.
\end{example}
Now we prove Theorem~\ref{tensor_cor}.
\begin{proof}[Proof of Theorem~\ref{tensor_cor}]
Let $r =\rank[\sum_{a \in [n]} x_a]$, and let $\{y_a : a \in [r]\}\subseteq \pro{\V_1: \dots : \V_m}$ be a multiset of product tensors for which $\sum_{a \in [n]} x_a = \sum_{a \in [r]} y_a$ is a tensor rank decomposition. We need to prove that $r$ satisfies the inequality~\eqref{tensor_rank_inequality}. For each $a \in [r]$, let $x_{n+a}=-y_a$, and let ${T_1\sqcup \dots \sqcup T_t =[n+r]}$ be the index sets of the connected components of ${\{x_a : a \in [n+r]\}}$. For each subset ${S \subseteq [n]}$ and index $j \in [m]$, let
\begin{align}
d_j^S=\dim\spn\{x_{a,j} : a \in S\}.
\end{align}

We first consider the case $t=1$, i.e. $\{x_a : a \in [n+r]\}$ is connected. By the splitting theorem, it holds that
\begin{align}
n+r &\geq \sum_{j=1}^m (d_j-1)+2\\
	& \geq \mu+\sum_{j=1}^m (k_j-1)+2,
\end{align}
completing the proof in this case.

We proceed by induction on $t$. Suppose the theorem holds whenever the number of connected components is less than $t$. Assume without loss of generality that

\begin{align}
\bigabs{T_1 \cap [n]}-\bigabs{T_1 \cap [n+r]\setminus [n]} &\geq \bigabs{T_2 \cap [n]}-\bigabs{T_2 \cap [n+r]\setminus [n]}\\
 &\;\; \vdots \\
 & \geq \bigabs{T_{{t}} \cap [n]}-\bigabs{T_{{t}} \cap [n+r]\setminus [n]}\\
 &\geq 0,
\end{align}
where the last line follows from the fact that $\sum_{a \in [r]} y_a$ is a tensor rank decomposition. If
\begin{align}
\bigabs{T_1 \cap [n]} = \bigabs{T_1 \cap [n+r]\setminus [n]},
\end{align}
then $r=n$ and we are done. Otherwise,
\begin{align}
\bigabs{T_{[t-1]} \cap [n]} > \bigabs{T_{[t-1]} \cap [n+r]\setminus [n]},
\end{align}
where $T_{[t-1]}=T_1 \sqcup\dots \sqcup T_{t-1}$.

Observe that $k_j < \bigabs{T_{[t-1]}\cap[n]}$ for all $j \in [m]$. Indeed, since
\begin{align}
\rank\bigg[\sum_{a \in T_{[t-1]} \cap [n]} x_a\bigg] < \bigabs{T_{[t-1]} \cap [n]},
\end{align}
it must hold that
\begin{align}
2 \bigabs{T_{[t-1]} \cap [n]}-1 \geq \sum_{j=1}^m \bigg(\min\big\{\bigabs{T_{[t-1]}\cap [n]},k_j\big\}-1\bigg)+2,
\end{align}
by~\eqref{k-rank_bound1}. If $k_i \geq \bigabs{T_{{[t-1]}}\cap [n]}$ for some $i \in [m]$, then this inequality implies that ${k_j < \bigabs{T_{{[t-1]}}\cap [n]}}$ for all $j \neq i$, and hence
\begin{align}
k_i \geq \bigabs{T_{{[t-1]}}\cap [n]} \geq \sum_{\substack{j \in [m]\\ j \neq i}} (k_j-1)+2,
\end{align}
contradicting~\eqref{balanced_k_rank}. So $k_j < \bigabs{T_{{[t-1]}}\cap [n]}$ for all $j \in [m]$.

Since $k_j< \bigabs{T_{{[t-1]}}\cap [n]}$ for all $j \in [m]$, the k-ranks of $\{x_a : a \in T_{[t-1]}\cap [n]\}$ satisfy~\eqref{balanced_k_rank}, so by the induction hypothesis,
\begin{align}\label{induct_hat}
\bigabs{T_{[t-1]}} \geq \mu^{T_{[t-1]}\cap[n]}+\sum_{j=1}^m(k_j-1)+2,
\end{align}
where
\begin{align}
\mu^{T_{[t-1]}\cap[n]}=\max_{\substack{i,j \in [m]\\i\neq j}}\bigg\{d_i^{T_{[t-1]}\cap[n]}-k_i+d_j^{T_{[t-1]}\cap[n]}-k_j\bigg\}.
\end{align}
To complete the proof, we will show that
\begin{align}
\bigabs{T_{[t-1]}}+\abs{T_t} \geq \mu+\sum_{j=1}^m (k_j-1)+2.
\end{align}
Let $i,i' \in [m]$ be such that $\mu=d_i-k_i+d_{i'}-k_{i'}$. Then
\begin{align}
\bigabs{T_{[t-1]}}+\abs{T_t}&\geq d_i^{T_{[t-1]}\cap[n]}-k_i+d_{i'}^{T_{[t-1]}\cap[n]}-k_{i'}+\sum_{j=1}^m(k_j-1)+\sum_{j=1}^m(d_j^{T_t\cap [n]}-1)+4\\
					&\geq d_i^{T_{[t-1]}\cap[n]}-k_i+d_{i'}^{T_{[t-1]}\cap[n]}-k_{i'}+\sum_{j=1}^m(k_j-1)+d_{i}^{T_t\cap [n]}+d_{i'}^{T_t\cap [n]}+2\\
					&\geq d_i-k_i+d_{i'}-k_{i'}+\sum_{j=1}^m(k_j-1)+2\\
					&= \mu+\sum_{j=1}^m(k_j-1)+2,
\end{align}
where the first line follows from~\eqref{induct_hat} and the fact that $\{x_a : a \in T_t\}$ is connected, the second is obvious, the third is easy to verify (in matroid-theoretic terms, this is submodularity of the rank function), and the fourth is by definition. This completes the proof.
\end{proof}

%

As an immediate corollary to Theorem~\ref{tensor_cor}, we obtain the following lower bound on the Waring rank of a symmetric tensor, in terms of a known symmetric decomposition.

\begin{cor}[Waring rank lower bound]\label{symmetric_cor}
Let $n\geq 2$, and $m \geq 2$ be integers, let $\W$ be a vector space over a field $\field$ with $\Char(\field)=0$ or $\Char(\field)>m$, and let ${\{v_a : a \in [n]\} \subseteq \W\setminus\{0\}}$ be a multiset of non-zero vectors. Let
\begin{align}k={\krank(v_a : a \in [n])}
\end{align}
 and
 \begin{align}
 {d=\dim\spn\{v_a : a \in [n]\}}.
 \end{align}
  Then for any multiset of non-zero scalars
\begin{align}
\{\alpha_a : a \in [n]\} \subseteq \field^\times,
\end{align}
it holds that 
\begin{align}\label{Waring_rank_inequality}
\setft{WaringRank}\bigg[ \sum_{a \in [n]} \alpha_a v_a^{\otimes m}\bigg] \geq \min\{n,2d+(m-2)(k-1)-n\}.
\end{align}
\end{cor}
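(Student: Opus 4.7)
The plan is to apply Theorem~\ref{tensor_cor} directly to the multiset of product tensors $\{x_a : a \in [n]\}$ defined by $x_a = (\alpha_a v_a) \otimes v_a \otimes \cdots \otimes v_a \in \pro{\W : \dots : \W}$. Each $x_a$ coincides with $\alpha_a v_a^{\otimes m}$, so their sum is exactly the symmetric tensor whose Waring rank we wish to bound. Since scalar multiplication affects neither k-rank nor standard rank, in the notation of Theorem~\ref{tensor_cor} we have $k_j = k$ and $d_j = d$ for every $j \in [m]$, and hence $\mu = 2(d-k)$.

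Next I would verify the balance condition~\eqref{balanced_k_rank}. With $k_j = k$ for every $j$, it reduces to $k \leq (m-1)(k-1)+1$, i.e., $(m-2)(k-1) \geq 0$, which holds because $m \geq 2$ and $k \geq 1$ (the latter since each $v_a$ is non-zero). Applying Theorem~\ref{tensor_cor} then yields
\begin{align}
\rank\bigg[\sum_{a \in [n]} \alpha_a v_a^{\otimes m}\bigg] \geq \min\bigg\{n,\, 2(d-k) + m(k-1) + 2 - n\bigg\},
\end{align}
and a brief rearrangement gives $2(d-k) + m(k-1) + 2 - n = 2d + (m-2)(k-1) - n$, matching the bound in the corollary.

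Finally, any symmetric decomposition $\sum_{a \in [r]} \beta_a u_a^{\otimes m}$ of a symmetric tensor is in particular a decomposition into $r$ (general) product tensors in $\W^{\otimes m}$, so the Waring rank is always bounded below by the tensor rank. This converts the lower bound on tensor rank into the claimed lower bound on Waring rank. There isn't really a main obstacle here: the deduction is essentially immediate from Theorem~\ref{tensor_cor} once one observes that the symmetric setting forces all $k_j$ and $d_j$ to coincide, automatically satisfies the balance hypothesis, and that Waring rank dominates tensor rank.
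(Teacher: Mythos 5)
Your proof is correct and follows the same route the paper intends: the paper states the corollary as an ``immediate corollary'' of Theorem~\ref{tensor_cor} without spelling out the details, and you have supplied exactly those details---the observation that $k_j=k$, $d_j=d$, and $\mu=2(d-k)$ for a symmetric decomposition, the verification that the balance hypothesis~\eqref{balanced_k_rank} reduces to $(m-2)(k-1)\geq 0$, the algebraic simplification $2(d-k)+m(k-1)+2-n=2d+(m-2)(k-1)-n$, and the fact that Waring rank dominates tensor rank.
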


\subsection{Our tensor rank lower bound is sharp}\label{sharp_tensor_bound}

In this subsection, we observe that, in a wide parameter regime, the inequalities~\eqref{tensor_rank_inequality} and~\eqref{Waring_rank_inequality} appearing in Theorem~\ref{tensor_cor} and Corollary~\ref{symmetric_cor} cannot be improved. 

Let $\field$ be a field with $\setft{Char}(\field)=0$, let $n \geq 2$, ${m \geq 2}$,
\begin{align}
2 \leq d_1,\dots, d_m \leq n,
\end{align}
and
\begin{align}
{k_1\leq d_1,\dots, k_m \leq d_m}
\end{align}
be positive integers, and let
\begin{align}
\lambda=\sum_{j=1}^m (k_j-1)+2.
\end{align}
Suppose that the following conditions hold:
\begin{enumerate}
\item ${\mu=2(d_{i}-k_i)}$ for some index $i \in [m]$, where $\mu$ is defined as in~\eqref{eq:q}.
\item$\max\{k_j: j \in [m]\}+d_i-k_i+1 \leq n \leq d_i-k_i+\lambda$
\item The inequality \eqref{balanced_k_rank} is satisfied.
\end{enumerate}
Then there exists a multiset of product tensors $E$ corresponding to these choices of parameters that satisfies~\eqref{tensor_rank_inequality} with equality. Indeed, the bound $\rank[\Sigma(E)] \geq n$ is trivial to attain with equality, and the bound
\begin{align}\label{eq:rank_bound}
\rank[\Sigma(E)] \geq 2(d_i-k_i)+\lambda-n
\end{align}
can be attained with equality as follows. Let
\begin{align}
\{x_a : a \in [\lambda]\} \subseteq \pro{\field^{d_1}: \dots : \field^{d_m}}
\end{align}
be a multiset of product tensors that forms a circuit and satisfies
\begin{align}\label{eq:l}
\dim\spn\{x_{a,j}: a \in [\lambda]\}=\krank(x_{a,j} : a \in [\lambda])=k_j
\end{align}
for all $j \in [m]$. An example of such a circuit is presented in~\cite{DERKSEN2013708}, and reviewed in Section~\ref{conjecture_kruskal}. Now, let
\begin{align}
\{x_{a} : a\in [\lambda+d_i-k_i]\setminus [\lambda]\}\subseteq \pro{\field^{d_1}: \dots : \field^{d_m}}
\end{align}
be any multiset of product tensors for which
\begin{align}\label{eq:other}
\dim\spn\{x_{a,j} : a \in [\lambda+d_i-k_i]\}=d_j
\end{align}
and
\begin{align}
\krank(x_{a,j} : a\in [\lambda+d_i-k_i]) = k_j
\end{align}
for all $j \in [m]$, which is guaranteed to exist since $\field$ is infinite. Let
\begin{align}
E=\{x_a : a \in [n-d_i+k_i]\} \sqcup \{x_a : a \in [\lambda+d_i-k_i]\setminus [\lambda]\}
\end{align}
and
\begin{align}
F=\{x_a : a \in [\lambda] \setminus [n-d_i+k_i]\} \sqcup \{x_a : a \in [\lambda+d_i-k_i]\setminus [\lambda]\}.
\end{align}

Recall that $n \leq d_i-k_i+\lambda$ by assumption, so the set $[\lambda] \setminus [n-d_i+k_i]$ that appears in the definition of $F$ is well-defined. Since $n -d_i+k_i \geq k_j+1$ for all $j \in [m]$, $E$ has k-ranks $k_1,\dots, k_m,$ as desired. It is also clear that $E$ has ranks $d_1,\dots,d_m$, by~\eqref{eq:l} and~\eqref{eq:other}.
Since $\{x_a: a \in [\lambda]\}$ forms a circuit, some non-zero linear combination of $E$ is equal to a non-zero linear combination of $F$. Since $\abs{F}$ is equal to the right hand side of~\eqref{eq:rank_bound}, this completes the proof.

Out of the three conditions required for our construction, $\mu=2(d_i-k_i)$ seems the most restrictive. Unfortunately, our methods appear to require this condition. A nearly identical construction shows that the inequality~\eqref{Waring_rank_inequality} appearing in Corollary~\ref{symmetric_cor} cannot be improved (and our restrictive condition on $\mu$ is automatically satisfied in this case). The only difference in the construction is to choose the product tensors $\{x_a : a \in [\lambda+d_i-k_i]\}$ to be symmetric in this case, which can always be done (in particular, the product tensors appearing in Derksen's example can be taken to be symmetric).

\section{A uniqueness result for non-Waring rank decompositions}\label{symmetric_non-rank}

In this section, we prove a sufficient condition on a symmetric decomposition
\begin{align}\label{eq:symmetric_non-rank}
v=\sum_{a \in [n]} \alpha_a v_a^{\otimes m}
\end{align}
under which any distinct decomposition $v=\sum_{a \in [r]} \beta_a u_a^{\otimes m}$ must have $r$ lower bounded by some quantity, which we call $r_{\min}$ for now. When $r_{\min} \leq n$, this yields a lower bound on $\setft{WaringRank}(v)$ that is contained in Corollary~\ref{symmetric_cor}. When ${r_{\min} =n+1}$, this yields a uniqueness criterion for symmetric decompositions that is contained in Theorem~\ref{k-gen}, but improves Kruskal's theorem in a wide parameter regime. The main result in this section is the case ${r_{\min} > n+1}$, where we obtain an even stronger statement than uniqueness: Every symmetric decomposition of $v$ into less than $r_{\min}$ terms must be equal to $\sum_{a \in [n]} \alpha_a v_a^{\otimes m}$ (in the language introduced in Section~\ref{mp}, $\sum_{a \in [n]} \alpha_a v_a^{\otimes m}$ is the \textit{unique symmetric decomposition of $v$ into less than $r_{\min}$ terms}). In Section~\ref{waring_non-rank_sharp} we prove that our bound $r_{\min}$ cannot be improved. In Section~\ref{non-rank_applications} we identify potential applications of our non-rank uniqueness results.

Our results in this section were inspired by, and generalize, Theorem~6.8 and Remark~6.14 in~\cite{Chiantini2019}. Our results in this section should be compared with those of Section~\ref{non-rank} on uniqueness of non-rank decompositions of tensors that are not necessarily symmetric.

\begin{theorem}\label{symmetric_non-rank_theorem}
Let $n\geq 2$ and $m \geq 2$ be integers, let $\W$ be a vector space over a field $\field$ with $\Char(\field)=0$ or $\Char(\field)>m$, let $E={\{v_a : a \in [n]\} \subseteq \W\setminus\{0\}}$ be a multiset of non-zero vectors with ${\krank(v_a : a \in [n]) \geq 2}$, and let
\begin{align}
{d=\dim\spn\{v_a : a \in [n]\}}.
\end{align}
Then for any non-negative integer $r \geq 0$, multiset of non-zero vectors ${F={\{u_a : a \in [r]\} \subseteq \W\setminus\{0\}}}$ with ${\krank(u_a : a \in [r]) \geq \min\{2,r\}}$,
and multisets of non-zero scalars
\begin{align}
\{\alpha_a : a \in [n]\}, \{\beta_a: a \in [r]\} \subseteq \field^\times
\end{align}
for which 
\begin{align}\label{symmetric_notequal}
\{\alpha_a v_a^{\otimes m} : a \in [n]\} \neq \{\beta_a u_a^{\otimes m} : a \in [r]\}
\end{align}
and
\begin{align}\label{symmetric_equation}
\sum_{a \in [n]} \alpha_a v_a^{\otimes m} = \sum_{a \in [r]} \beta_a u_a^{\otimes m},
\end{align}
it holds that 
\begin{align}\label{symmetric_first_inequality}
n+ r \geq m+2d-2.
\end{align}
\end{theorem}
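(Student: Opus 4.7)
The plan is to apply the splitting theorem (Theorem~\ref{conjecture}) to the combined multiset
\[
  E = \{\alpha_a v_a^{\otimes m} : a \in [n]\} \sqcup \{-\beta_a u_a^{\otimes m} : a \in [r]\} \subseteq \W^{\otimes m},
\]
which sums to zero by hypothesis. First I will partition $E$ into its connected components $T_1,\dots,T_t$; since the decomposition into connected components is a direct-sum decomposition of $\spn(E)$, each $T_p$ sums to zero and therefore has size at least $2$. Because every element of $E$ is a symmetric product tensor, the subsystem dimensions appearing in the splitting theorem all coincide with
\[
  D_p = \dim\spn(\{v_a : a \in T_p \cap [n]\} \cup \{u_{a-n} : a \in T_p \setminus [n]\}),
\]
and combining the splitting theorem applied to $E|_{T_p}$ with the bound $\dim\spn(E|_{T_p}) \leq |T_p| - 1$ (from $\sum E|_{T_p} = 0$) yields $|T_p| \geq m(D_p - 1) + 2$.

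Next I will classify each component. A size-$2$ connected component must consist of two proportional tensors, and the k-rank assumptions force such a pair to be \emph{mixed} (one from each side) with $\alpha_a v_a^{\otimes m} = \beta_b u_b^{\otimes m}$; I call these trivial, and they correspond bijectively to the common elements of the two multisets. The hypothesis that the multisets are unequal therefore guarantees that the number $t_1$ of non-trivial components satisfies $t_1 \geq 1$. The k-rank hypotheses also imply $D_p \geq 2$ for every non-trivial $T_p$: otherwise every vector in $T_p$ would be proportional to a single line, and the k-rank bounds would force $|T_p| \leq 2$.

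Writing $t_0 = t - t_1$, I will extract two lower bounds on $n + r = \sum_p |T_p|$:
\[
  \text{(a)}\ \ n + r \geq 2 t_0 + (m+2) t_1,
  \qquad
  \text{(b)}\ \ n + r \geq m d - (m-2) t.
\]
Bound (a) follows from $|T_p| \geq m + 2$ for each non-trivial component (since $D_p \geq 2$). Bound (b) follows from summing $|T_p| \geq m(D_p - 1) + 2$ together with $\sum_{\text{NT}} D_p \geq d - t_0$, which holds because the $v_a$ lying in trivial components span at most $t_0$ dimensions while the remaining $v_a$ lie in the sum of the non-trivial joint spaces $\spn(\{v_a, u_{a-n}: a \in T_p\})$.

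The main obstacle is a short case analysis verifying that at least one of (a), (b) yields $n + r \geq m + 2d - 2$. When $d \geq t + 1$, bound (b) rewrites to $(m-2)(d - 1 - t) \geq 0$, as desired. When $d \leq t$, bound (a) combined with $m + 2d - 2 \leq m + 2t - 2$ reduces the inequality to $m(t_1 - 1) \geq -2$, which holds for every $m \geq 2$ and $t_1 \geq 1$.
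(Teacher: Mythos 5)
Your proof is correct, and it takes a genuinely different route from the paper's. The paper first reduces to the case where $\{v_a : a \in [n]\}$ is linearly independent (so $d = n$) via a brief ``subtract terms and recombine parallel product tensors'' step, then notes $r \geq n$, and sums the splitting-theorem bound over the connected components meeting $[n]$, anchoring the argument on a single component of size at least $3$ and using $n = d$ explicitly in the final telescoping estimate. Your argument skips that reduction entirely and works in the original generality: you classify components into trivial size-$2$ matched pairs versus non-trivial ones, and extract two complementary estimates — bound (a), from the size estimate $|T_p| \geq m+2$ on non-trivial components, and bound (b), from the global dimension count $\sum_p D_p \geq d$ — reconciled by a case split on whether $d \geq t+1$ or $d \leq t$. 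Both proofs hinge on the same consequence of Theorem~\ref{conjecture}, namely $|T_p| \geq m(D_p-1)+2$ for each connected component, and both use the k-rank hypotheses in the same role (to force small components to be mixed pairs and non-trivial ones to have $D_p \geq 2$). What your route buys is that it sidesteps the paper's informal reduction (which requires some care to verify that the multiset-inequality hypothesis and the target inequality survive the recombination), at the price of carrying the extra parameter $d < n$ through a slightly longer case analysis; the paper's route buys a shorter final computation once the reduction is granted.
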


In the language of the introduction to this section, $r_{\min} = m+2d-2-n$. For comparison, the result we have referred to in~\cite{Chiantini2019} asserts that, under the condition $n \leq m$, it holds that $n+r \geq m+d$, which is weaker than our bound~\eqref{symmetric_first_inequality}.


\begin{proof}[Proof of Theorem~\ref{symmetric_non-rank_theorem}]
By subtracting terms from both sides of~\eqref{symmetric_equation}, and combining parallel product tensors into single terms (or to zero), it is clear that it suffices to prove the statement when $E$ is linearly independent (so $d=n$).

Note that $r \geq n$ by Kruskal's theorem. For each $a \in [r]$, let $v_{n+a}=u_a$, and let ${T_1\sqcup \dots \sqcup T_t =[n+r]}$ be the index sets of the connected components of ${\{v_a^{\otimes m} : a \in [n+r]\}}$. Assume without loss of generality that $\bigabs{T_1 \cap [n]} \geq \dots \geq \bigabs{T_t \cap [n]}$, and let $\tilde{t}\in[t]$ be the largest integer for which $\bigabs{T_{\tilde{t}} \cap [n]} \geq 1$. By~\eqref{symmetric_notequal}, there must exist $\tilde{p} \in [\tilde{t}]$ for which $\abs{T_{\tilde{p}}} \geq 3$. Note that
\begin{align}
\dim\spn\{v_a : a \in T_{\tilde{p}}\} \geq \max\left\{2, \bigabs{T_{\tilde{p}} \cap [n]}\right\}.
\end{align}
Since $\{v_a^{\otimes m} : a \in T_{\tilde{p}}\}$ is connected, it follows from our splitting theorem that
\begin{align}\label{new_symmetric_proof_inequality}
\abs{T_{\tilde{p}}} \geq m(\max\left\{2, \bigabs{T_{\tilde{p}} \cap [n]}\right\}-1)+2.
\end{align}
Now,
\begin{align}
n+r &\geq \sum_{p \in [\tilde{t}]} \abs{T_p}\\
	& \geq \sum_{p \neq \tilde{p}} \big[m\left(\bigabs{T_p\cap [n]} -1\right)+2\big]+ m\left(\max\left\{2, \bigabs{T_{\tilde{p}} \cap [n]}\right\}-1\right)+2\\
	& = m\left(n-\abs{T_{\tilde{p}}\cap [n]}\right)-(m-2)\left(\tilde{t}-1\right)+m\left(\max\left\{2, \bigabs{T_{\tilde{p}} \cap [n]}\right\}-1\right)+2\\
	& \geq m\left(n-\abs{T_{\tilde{p}}\cap [n]}\right)-(m-2)\left(n- \bigabs{T_{\tilde{p}} \cap [n]}\right)+m\left(\max\left\{2, \bigabs{T_{\tilde{p}} \cap [n]}\right\}-1\right)+2\\
	&=2n-2\bigabs{T_{\tilde{p}} \cap [n]}+m\left(\max\left\{2, \bigabs{T_{\tilde{p}} \cap [n]}\right\}-1\right)+2\\
	&\geq 2n+m-2.
\end{align}
The first line is obvious, the second follows from~\eqref{new_symmetric_proof_inequality} and the fact that every multiset $\{v_a^{\otimes m} : a \in T_p\}$ is connected, the third is algebra, the fourth uses the fact that $\bigabs{T_p \cap [n]} \geq 1$ for all $p \in [\tilde{t}]$, and the rest is algebra. This completes the proof.
\end{proof}
Theorem~\ref{symmetric_non-rank_theorem} immediately implies the following uniqueness result for non-Waring rank decompositions.

\begin{cor}[Uniqueness result for non-Waring rank decompositions]\label{waring_uniqueness}
Let $n\geq 2$ and $m \geq 2$ be integers, let $\W$ be a vector space over a field $\field$ with $\Char(\field)=0$ or $\setft{Char}(\field)>m$, let ${\{v_a : a \in [n]\} \subseteq \W\setminus\{0\}}$ be a multiset of non-zero vectors with ${\krank(v_a : a \in [n]) \geq 2}$, let $\{\alpha_a : a \in [n]\} \subseteq \field^\times$ be a multiset of non-zero scalars, and let ${d=\dim\spn\{v_a : a \in [n]\}}$. If
\begin{align}\label{waring_uniqueness_inequality}
2n+1 \leq m+2d-2,
\end{align}
then $\sum_{a \in [n]} \alpha_a v_a^{\otimes m}$ constitutes a unique Waring rank decomposition. More generally, if
\begin{align}\label{waring_uniqueness_inequality}
n +r +1\leq m+2d-2,
\end{align}
for some $r \geq n$, then $\sum_{a \in [n]} \alpha_a v_a^{\otimes m}$ is the unique symmetric decomposition of this tensor into at most $r$ terms.
\end{cor}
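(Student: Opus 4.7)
The plan is to deduce Corollary~\ref{waring_uniqueness} from Theorem~\ref{symmetric_non-rank_theorem} by contrapositive. I will first establish the more general second assertion (about uniqueness of symmetric decompositions into at most $r$ terms), and then obtain the Waring-rank statement by a short additional argument handling the $\krank=1$ escape clause in the definition.

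For the second assertion, I will fix an arbitrary alternative symmetric decomposition $v=\sum_{a \in [\tilde r]} \beta_a u_a^{\otimes m}$ with $\tilde r \le r$ and show it must fall into one of the two allowed cases of the Section~\ref{mp} definition. If $\krank(u_a : a \in [\tilde r])=1$, the decomposition lies in the permitted exception and nothing remains to check. Otherwise $\tilde r \ge 2$ and $\krank(u_a : a \in [\tilde r])\ge 2=\min\{2,\tilde r\}$, so the hypotheses of Theorem~\ref{symmetric_non-rank_theorem} are met. Invoking that theorem, either the multisets $\{\alpha_a v_a^{\otimes m}:a\in[n]\}$ and $\{\beta_a u_a^{\otimes m}:a\in[\tilde r]\}$ coincide (in which case $\tilde r=n$ and we are done), or $n+\tilde r \ge m+2d-2$. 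The latter, combined with $\tilde r \le r$ and the hypothesis $n+r+1\le m+2d-2$, forces $n+\tilde r \le m+2d-3$, a contradiction. Hence the multisets agree.

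For the first assertion, applying the second with $r=n$ handles every case except $\krank(u_a : a \in [\tilde r])=1$, which is not allowed by the definition of \emph{unique Waring rank decomposition}. To rule out this remaining case, I will collapse parallel terms: if all $u_a$ are parallel, then $v=cw^{\otimes m}$ for some $w\in \W\setminus\{0\}$ and scalar $c$, or else $v=0$. I will then apply Theorem~\ref{symmetric_non-rank_theorem} a second time to the resulting decomposition with $r\in\{0,1\}$ terms (whose k-rank hypothesis $\min\{2,r\}=r$ is automatic). Since this alternative has at most one term while the original has $n\ge 2$, the two multisets necessarily differ, and the theorem would then yield $n+1\ge m+2d-2$, which together with $n\ge 2$ violates $2n+1\le m+2d-2$.

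The main subtlety I anticipate is not technical but definitional: correctly distinguishing the $\krank=1$ escape clause built into the \emph{unique symmetric decomposition into at most $r$ terms} notion from its absence in the \emph{unique Waring rank decomposition} notion, and ensuring the small-$r$ applications of Theorem~\ref{symmetric_non-rank_theorem} remain valid in the edge case $v=0$ (where $r=0$ and the empty multiset trivially satisfies the k-rank hypothesis). Once these are dispatched, the proof is a direct contrapositive of Theorem~\ref{symmetric_non-rank_theorem}.
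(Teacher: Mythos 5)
Your overall strategy — deriving the corollary as the contrapositive of Theorem~\ref{symmetric_non-rank_theorem} — is the same one the paper has in mind (it calls the corollary ``immediate''). Your treatment of the second assertion is essentially correct: when $\krank(u_a : a\in[\tilde r])=1$ the definitional escape clause in Section~\ref{mp} fires, and when $\tilde r\ge 2$ with $\krank\ge 2$ the theorem yields $n+\tilde r\ge m+2d-2$, contradicting $n+\tilde r\le n+r\le m+2d-3$. Do note, though, that ``otherwise'' also covers $\tilde r=0$ (i.e.\ $v=0$); there $\krank=0\neq 1$, so the escape does \emph{not} fire, and one must apply Theorem~\ref{symmetric_non-rank_theorem} with $r=0$ (multisets differ since $n\ge 2$, $\min\{2,0\}=0$ is vacuous) to get $n\ge m+2d-2$, which contradicts $2n+1\le m+2d-2$ and $n\ge 2$. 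You flag this edge case but should actually carry it out rather than defer it.

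Your handling of the first assertion has a genuine gap. After reducing to $\krank(u_a:a\in[\tilde r])=1$ with $\tilde r\le n$, you collapse parallel terms but only analyze the subcase in which \emph{all} of the $u_a$ are mutually parallel, arriving at $v=cw^{\otimes m}$ or $v=0$ and invoking Theorem~\ref{symmetric_non-rank_theorem} with $r\in\{0,1\}$. But $\krank=1$ only asserts the existence of \emph{one} parallel pair; after a full collapse the surviving decomposition can have $r''\ge 2$ pairwise non-parallel terms, a case your argument never reaches. The fix is straightforward: after collapsing you have $r''<\tilde r\le n$ terms with $\krank\ge\min\{2,r''\}$, so Theorem~\ref{symmetric_non-rank_theorem} gives $n+r''\ge m+2d-2$, while $n+r''<2n\le m+2d-3$, a contradiction for every $r''$. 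Cleaner still: Section~\ref{mp} already records that \emph{unique Waring rank decomposition} is by definition equivalent to \emph{unique symmetric decomposition into at most $n$ terms}, so the first assertion is literally the $r=n$ case of the second and needs no separate collapse argument at all.
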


Note that the $r=n$ case of Corollary~\ref{waring_uniqueness} improves Kruskal's theorem for symmetric decompositions as soon as $2d > m(k-2)+4$, where $k=\krank(v_a : a \in [n])$. This case of Corollary~\ref{waring_uniqueness} is in fact contained in our generalization of Kruskal's theorem (Theorem~\ref{k-gen}), since for every subset $S\subseteq [n]$ with $2 \leq \abs{S} \leq n$, it holds that
\begin{align}
2 \abs{S}&=2n-2\bigabs{[n]\setminus S}\\
		&\leq m+2d-2\bigabs{[n]\setminus S}-3\\
		&\leq m+2d^{S}-3\\
		&\leq m(d^S-1)+1,
\end{align}
where $d^S=\dim\spn\{v_a : a \in S\}$. This demonstrates that our generalization of Kruskal's theorem is stronger than Kruskal's theorem, even for symmetric tensor decompositions.

Our main result in this section is the $r>n$ case of Corollary~\ref{waring_uniqueness}, which yields uniqueness results for non-Waring rank decompositions of $\sum_{a \in [n]} \alpha_a v_a^{\otimes n}$. The following example illustrates this case in practice.
\begin{example}\label{ex:symmetric}
It follows from Corollary~\ref{waring_uniqueness} that for any positive integers $m\geq 3$ and $n \geq 2$, $\sum_{a \in [n]} e_a^{\otimes m}$ is the unique symmetric decomposition of this tensor into at most $m+n-3$ terms.
\end{example}

It is natural to ask if Corollary~\ref{waring_uniqueness} can be improved under further restrictions on $\krank(v_a : a \in [n])$. At the end of Section~\ref{waring_non-rank_sharp} we prove that this cannot be done, at least in a particular parameter regime.

\subsection{The inequality appearing in our uniqueness result is sharp}\label{waring_non-rank_sharp}

In this subsection we prove that the inequality~\eqref{symmetric_first_inequality} that appears in Theorem~\ref{symmetric_non-rank_theorem} cannot be improved, by constructing explicit multisets of symmetric product tensors that satisfy this bound with equality.

Let $\field$ be a field with $\Char(\field)=0$. We will prove that for any choice of positive integers $m \geq 2$, $d \geq 2$, $r \geq d-2$, and $n\geq d$ for which $n+r=m+2d-2$, there exist multisets of non-zero vectors $E$ and $F$ that satisfy the assumptions of Theorem~\ref{symmetric_non-rank_theorem}. Note that the inequality ${r \geq d-2}$ automatically holds when $r \geq n$, so this assumption does not restrict the parameter regime in which the inequality appearing in our uniqueness result (Corollary~\ref{waring_uniqueness}) is sharp as a consequence.

We first consider the case $d=2$. Let $\{v_a^{\otimes m} : a \in [m+2]\} \subseteq \pro{\field^{2} : \dots : \field^2}$ be a circuit of symmetric product tensors for which
\begin{align}
\krank(v_a: a \in [m+2])= 2.
\end{align}
An example of such a circuit is given in~\cite{DERKSEN2013708}, and reviewed in Section~\ref{conjecture_kruskal}. So there exist non-zero scalars $\{\alpha_a : a \in [m+2]\}\subseteq \field^\times$ for which $\sum_{a \in [m+2]} \alpha_a v_a^{\otimes m}=0,$ and we can take the multisets $E=\{v_a : a \in [n]\}$ and $F=\{v_a : a \in [m+2]\setminus [n]\}$ to conclude.

For $d \geq 3$, let $\{v_a^{\otimes m} : a \in [m+2]\} \subseteq \pro{\field^{d} : \dots : \field^d}$ be the same multiset of symmetric product tensors as above, embedded in a larger space. Let
\begin{align}
\{v_a: a \in [d+m]\setminus [m+2]\} \subseteq \field^d\setminus \{0\}
\end{align}
be any multiset of non-zero vectors for which
\begin{align}
\dim\spn\{v_a : a \in [d+m]\}=d
\end{align}
and
\begin{align}
\krank\{v_a : a \in [d+m]\}\geq 2,
\end{align}
which is guaranteed to exist since $\field$ is infinite. Since $r \geq d-2$, we can take the multisets
\begin{align}
E=\{v_a: a \in [n-d+2]\}\sqcup\{v_a: a \in [d+m]\setminus [m+2]\}
\end{align}
and
\begin{align}
F=\{v_a : a \in [m+2]\setminus[n-d+2]\} \sqcup \{v_a: a \in [d+m]\setminus [m+2]\}
\end{align}
to conclude.

Somewhat surprisingly, the inequality~\eqref{symmetric_first_inequality} is very nearly sharp even when the k-rank condition is tightened to ${\krank(v_a : a \in [n]) \geq k}$ for some $k \geq 3$, under certain parameter constraints. More specifically, for any $k \in \{3,4,\dots, d-1\}$, it is almost sharp under the choice $n=d+1$ and $r=m+d-1$. Let
\begin{align}
E= \{v_a : a \in [d+m]\setminus[m]\} \sqcup \bigg\{\sum_{a\in [k]} v_a\bigg\},
\end{align}
and
\begin{align}
F=\{v_a: a \in [m]\} \sqcup \{v_a : a \in [d+m]\setminus [m+2] \}\sqcup \bigg\{\sum_{a\in [k]} v_a\bigg\}.
\end{align}
Here, $\abs{E}+\abs{F}=2d+m,$ exceeding our lower bound by 2. When $k=d$, take the same multisets $E$ and $F$, with $\sum_{a\in [d]} v_a$ removed, to observe that our bound is sharp under the choice $n=d$ and $r=m+d-2$. Note that the k-rank is brought down to $k$ because of a single vector in the multiset. This is a concrete demonstration of the fact that the k-rank is a very crude measure of genericity. We emphasize that this construction relies on the particular choice of parameters $n=d+1$, and $r=m+d-1$. It is possible that the inequality~\eqref{symmetric_first_inequality} could be significantly stengthened for other choices of $n$ and $r$. Indeed, we have exhibited such an improvement for $r \leq n$ in Corollary~\ref{symmetric_cor}.


\subsection{Applications of non-rank uniqueness results}\label{non-rank_applications}

In this subsection, we identify potential applications of our results on uniqueness of non-rank decompositions. For concreteness, we focus on the symmetric case and our non-Waring rank uniqueness result in Corollary~\ref{waring_uniqueness}, however similar comments can be applied to our analogous results in Section~\ref{non-rank} in the non-symmetric case.

We say a symmetric tensor $v$ is \textit{identifiable} if it has a unique Waring rank decomposition. For the purposes of this discussion, we will say that $v$ is $r$\textit{-identifiable} for some ${r \geq \rank(v)}$ if the Waring rank decomposition of $v$ is the unique symmetric decomposition of $v$ into at most $r$ terms (see Section~\ref{mp}). Corollary~\ref{waring_uniqueness} provides a sufficient condition for a symmetric tensor $v$ to be $r$-identifiable for $r>\rank(v)$, and Example~\ref{ex:symmetric} demonstrates the existence of symmetric tensors satisfying this condition. We can thus define a hierarchy of identifiable symmetric tensors (of some fixed rank), where those that are $r$-identifiable for larger $r$ can be thought of as ``more identifiable." We suggest that studying this hierarchy could be a useful tool for studying symmetric tensor decompositions. For example, although most symmetric tensors of sub-generic rank are identifiable, it is notoriously difficult to find the rank decomposition of such tensors~\cite{landsberg2012tensors,pmlr-v35-bhaskara14b,chiantini2017generic}. Perhaps one can leverage the additional structure of $r$-identifiable symmetric tensors to find efficient decompositions.

In applications, one often has a symmetric decomposition of a tensor, and wants to control the possible symmetric decompositions with fewer terms. Uniqueness results for non-rank decompositions can be turned around to apply in this setting: Suppose we know that if a symmetric decomposition into $n$ terms satisfies some condition, call it $C$, then it is the unique symmetric decomposition into at most $r$ terms, for some $r > n$. Then if one starts with a symmetric decomposition of a symmetric tensor $v$ into $r$ terms, she knows that there are no symmetric decompositions of $v$ into $n<r$ terms that satisfies condition $C$. In this way, one can use a non-rank uniqueness result to control the possible decompositions of $v$ into fewer than $r$ symmetric product tensors. Applying this reasoning to our Corollary~\ref{waring_uniqueness} simply yields a special case of Theorem~\ref{symmetric_non-rank_theorem}. However, applying analogous reasoning to Corollary~\ref{k_arbitrarys1} in the non-symmetric case seems to produce new results.

\section{Comparing our generalization of Kruskal's theorem to the uniqueness criteria of Domanov, De Lathauwer, and S\o{}rensen}\label{uniqueness_applications}

In this section we compare our generalization of Kruskal's theorem to uniqueness criteria obtained by Domanov, De Lathauwer, and S\o{}rensen (DLS) in the case of three subsystems \cite{domanov2013uniqueness,domanov2013uniqueness2,domanov2014canonical,sorensen2015new,sorensen2015coupled}, which are the only previously known extensions of Kruskal's theorem that we are aware of. A drawback to the uniqueness criteria of DLS is that, similarly to Kruskal's theorem, they require the k-ranks to be above a certain threshold. In Section~\ref{threshold} we make this statement precise, and show by example that our generalization of Kruskal's theorem can certify uniqueness below this threshold. Moreover, in Section~\ref{combine} we observe that our generalization of Kruskal's theorem contains many of the uniqueness criteria of DLS. The uniqueness criteria of DLS are spread across five papers, and can be difficult to keep track of. For clarity and future reference, in Theorem~\ref{uniqueness} we combine all of these criteria into a single statement. In~Section~\ref{new_conjecture} we use insight gained from this synthesization and our Theorem~\ref{k-gen} as evidence to support a conjectural uniqueness criterion that would contain and unify every uniqueness criteria of DLS into a single, elegant statement.

For the remainder of this section, we fix a vector space $\V=\V_1 \otimes \V_2 \otimes \V_3$ over a field $\field$, and a multiset of product tensors
\begin{align}
\{x_a : a \in [n]\} \subseteq \pro{\V_1:\V_2 : \V_3}
\end{align}
with k-ranks $k_j=\krank(x_{a,j} : a \in [n])$ for each $j \in [3]$. For each subset $S \subseteq [n]$ with $2 \leq \abs{S} \leq n$ and index $j \in [3]$, we let
\begin{align}
d_j^S=\dim\spn\{x_{a,j} : a \in [n]\}.
\end{align}
We also let $d_j=d_j^{[n]}$ for all $j \in [3]$.

\subsection{Uniqueness below the k-rank threshold of DLS}\label{threshold}

All of the uniqueness criteria of DLS require the k-ranks to be above a certain threshold. In this subsection, we show by example that our generalization of Kruskal's theorem can certify uniqueness below this threshold.

Making this threshold precise, the uniqueness criteria of DLS cannot be applied whenever
\begin{align}\label{k-rank-large}
&\min\{k_2,k_3\} \leq n-d_1+1,\nonumber\\
\text{ and } &\min\{k_1,k_3\} \leq n-d_2 + 1,\nonumber\\
\text{ and } & \min\{k_1,k_2\} \leq n-d_3 +1.
\end{align}
For example, if $k_2= k_3=2$, then the uniqueness criteria of DLS can only certify uniqueness if $d_1=n$. The following example shows that our generalization of Kruskal's theorem (Theorem~\ref{k-gen}) can certify uniqueness even if~\eqref{k-rank-large} holds.

\begin{example}\label{certify}
Consider the multiset of product tensors
\label{uniqueness_example}
\begin{align}\{&\alpha_1 e_1^{\otimes 3},\alpha_2 e_2^{\otimes 3}, \alpha_3 e_3^{\otimes 3},\alpha_4 e_4^{\otimes 3},\alpha_5(e_2+e_3)\otimes (e_2+e_4)\otimes (e_1+e_4)\} \quad \text{for} \quad \alpha_1,\dots, \alpha_5 \in \field^{\times}.
\end{align}
In this example, $k_1=k_2=k_3=2$, $d_1=d_2=d_3=4$, and $n-d_j+2=3$ for all $j \in [3]$, so~\eqref{k-rank-large} holds. Nevertheless, for arbitrary $\alpha_1,\dots, \alpha_5 \in \field^{\times}$, our generalization of Kruskal's theorem certifies that the sum of these product tensors constitutes a unique tensor rank decomposition. We note that uniqueness for $\alpha_2=\dots=\alpha_5=1$ was proven in~\cite[Example~5.2]{domanov2013uniqueness2}, using a proof specific to this case, in order to demonstrate that their uniqueness criteria are not also necessary for uniqueness.
\end{example}

Example~\ref{certify} shows that Theorem~\ref{k-gen} is strictly stronger than Kruskal's theorem, and is independent of the uniqueness criteria of DLS. It is natural to ask if Theorem~\ref{k-gen} is stronger than Kruskal's theorem even for symmetric tensor decompositions. We have observed in Section~\ref{symmetric_non-rank} that this is indeed the case.

\subsection{Extending several uniqueness criteria of DLS}\label{combine}

In this subsection, we observe that several of the uniqueness criteria of DLS are contained in our generalization of Kruskal's theorem, and prove a further, independent uniqueness criterion. The uniqueness criteria of DLS are numerous, and can be difficult to keep track of. To more easily analyze these criteria, in Theorem~\ref{uniqueness} we combine them all into a single statement.

\subsubsection{Conditions~U,~H,~C, and ~S}
Here we introduce several different conditions on multisets of product tensors, which will make the uniqueness criteria of DLS easier to state, and also make them easier to relate to our generalization of Kruskal's theorem. We first recall Conditions~U,~H, and~C from \cite{domanov2013uniqueness,domanov2013uniqueness2}. For notational convenience, we have changed these definitions slightly from~\cite{domanov2013uniqueness,domanov2013uniqueness2}. For example, our Condition~U is their Condition~$U_{n-d_1+2}$, with the added condition that $k_1 \geq 2$. After reviewing Conditions~U,~H, and~C, we introduce Condition~S, which captures the conditions of our generalization of Kruskal's theorem in the case ${m=3}$. Unlike Conditions~U,~H, and~C, our Condition~S does not appear in~\cite{domanov2013uniqueness,domanov2013uniqueness2}, nor anywhere else that we are aware of.

For a vector $\alpha \in \field^n$, we let $\omega(\alpha)$ denote the number of non-zero entries in $\alpha$.

%



\begin{namedtheorem}{Condition U}
It holds that $k_{1} \geq 2$, and for all $\alpha \in \field^n$,
\begin{align}\label{Umeq}
\rank\Big[\sum_{a \in [n]} \alpha_a x_{a, 2}\otimes x_{a,3} \Big]\geq \min\{\omega(\alpha),n-d_1+2\}.
\end{align}
\end{namedtheorem}

\begin{namedtheorem}{Condition H}
It holds that $k_{1} \geq 2$, and
\begin{align}\label{k-gen2eq}
d_{2}^S\!+d_{3}^S\!-\abs{S} \geq \min\{\abs{S}, n-d_{1}\!+2\}
\end{align}
for all $S \subseteq [n]$ with $2 \leq \abs{S}\leq n$.
\end{namedtheorem}
Condition~C takes a bit more work to describe. We use coordinates for this condition, in order to avoid having to introduce further multilinear algebra notation. For positive integers $q,r,$ and $t$, and matrices
\begin{align}
Y=(y_1,\dots, y_t) \in \Lin(\field^t, \field^q)\\
Z=(z_1,\dots, z_t)\in \Lin(\field^t, \field^r),
\end{align}
let
\begin{align}
Y \odot Z=(y_1\otimes z_1, \dots, y_t \otimes z_t) \in \Lin(\field^t,\field^{qr})
\end{align}
denote the \textit{Khatri-Rao product} of $Y$ and $Z$. Suppose $\V_j=\field^{d_j}$ for each $j \in [3]$, and consider the matrices
\begin{align}
X_j=(x_{1,j},\dots, x_{n,j})\in \Lin(\field^n,\field^{d_j})
\end{align}
for $j \in [3]$. For a positive integer $s\leq d_j$, let $\C_s(X_j)$ be the $\binom{d_j}{s} \times \binom{n}{s}$ matrix of $s \times s$ minors of $X_j$, with rows and columns arranged according to the lexicographic order on the size $s$ subsets of $[d_j]$ and $[n]$, respectively. Define the matrix
\begin{align}
C_s=\C_s(X_{2})\odot \C_s(X_{3}) \in \Lin(\field^{\binom{n}{s}}, \field^{q}),
\end{align}
where $q=\big(\substack{d_{2}\\s}\big)\big(\substack{d_{3}\\s}\big)$. Now we can state Condition~C.
\begin{namedtheorem}{Condition C}
It holds that $k_{1} \geq 2$, $\min\{d_2,d_3\} \geq n-d_1+2$, and
\begin{align}
\rank(C_{n-d_{1}+2})=\left(\substack{ n \\ n-d_{1}+2} \right).
\end{align}
\end{namedtheorem}
To more easily compare our generalization of Kruskal's theorem to the uniqueness criteria of DLS, we give a name (Condition~S) to the condition of our Theorem~\ref{k-gen} in the case $m=3$.
\begin{namedtheorem}{Condition S}
It holds that
\begin{align}\label{concon}
2 \abs{S} \leq d_1^S+d_2^S+d_3^S-2
\end{align}
for all $S \subseteq [n]$ with $2 \leq \abs{S}\leq n$.
\end{namedtheorem}

These conditions are related to each other as follows:
%

\begin{equation}\label{implications}
\begin{tikzpicture}[commutative diagrams/every diagram]
\node(dummy){};
\node[above=of dummy](H){\text{Condition H}};
\node[below=of dummy](C){\text{Condition C}};
\node[right=30pt of dummy](U){\text{Condition U}};
\node[right=30pt of H](S){\text{Condition S}};
\path[commutative diagrams/.cd, every arrow, every label]
(H) edge[commutative diagrams/Rightarrow]  (U)
(C) edge[commutative diagrams/Rightarrow] (U)
(H) edge[commutative diagrams/Rightarrow] (S);
\end{tikzpicture}
\end{equation}
All of the implications in~\eqref{implications} except (Condition H $\Rightarrow$ Condition S) were proven in~\cite{domanov2013uniqueness}. To see that Condition~H $\Rightarrow$ Condition S, note that for any subset $S\subseteq [n]$ with $2 \leq \abs{S} \leq n$, the condition $k_1 \geq 2$ implies
\begin{align}
d_{1}^S \geq \max\{2,d_{1}-(n-\abs{S})\},
\end{align}
so by Condition~H,
\begin{align}
d_{1}^S+d_{2}^S+d_{3}^S &\geq \max\{2,d_{1}-(n-\abs{S})\} + \abs{S}+\min\{\abs{S},n-d_{1}+2\}\\
&\geq 2 \abs{S}+2,
\end{align}
and Condition S holds. It is easy to find examples that certify {Condition~C $\not\Rightarrow$ Condition~S}.  By Example~\ref{uniqueness_example}, {Condition~S $\not\Rightarrow$ Condition~U}. In~\cite{domanov2013uniqueness} it is asked whether Condition~H $\Rightarrow$ Condition~C. Condition~U is theoretically computable, as it can be phrased as an ideal membership problem, however we are unaware of an efficient implementation. By comparison, Conditions~C,~H, and~S are easy to check.

In the case of three subsystems, our Theorem~\ref{k-gen} states that Condition~S implies uniqueness. Since {Condition~H $\Rightarrow$ Condition~S}, then a corollary to Theorem~\ref{k-gen} is that Condition~H implies uniqueness. Similarly, Theorem~\ref{uniqueness} below states that Condition~U + extra assumptions implies uniqueness. By~\eqref{implications}, this implies that Condition~H + the same extra assumptions implies uniqueness, and similarly, Condition~C + the same extra assumptions implies uniqueness. Since we have proven that Condition~H alone implies uniqueness, it is natural to ask whether Conditions~C or~U alone imply uniqueness. We reiterate this line of reasoning in Section~\ref{new_conjecture}, and pose this question formally.

\subsubsection{Synthesizing the uniqueness criteria of DLS}

The following theorem contains every uniqueness criterion of DLS for which we are aware of an efficient implementation. This theorem is stated in terms of Condition~U to maintain generality, however only the implied statements in which Condition~U is replaced by Conditions~H or~C have an efficient implementation. Note that our Theorem~\ref{k-gen} generalizes the Condition~H version of this theorem, to the statement that Condition~S alone implies uniqueness (so in particular, Condition~H alone implies uniqueness).


{\begin{theorem}\label{uniqueness}
Suppose that Condition~U holds, and any one of the following conditions holds: 
\begin{enumerate}[align=left]
\item \leavevmode\vspace{-\dimexpr\baselineskip + \topsep + \parsep} \begin{center}$k_{1}+\min\{k_{2},k_{3}-1\}\geq n+1.$ \end{center}
\item It holds that $k_2 \geq 2$ and for all $\alpha \in \field^n$,
\begin{align}
\setft{rank}\Big[\sum_{a \in [n]} \alpha_a x_{a,1}\otimes x_{a,3}\Big]\geq \min\{\omega(\alpha),n-d_2+2\}.
\end{align}
(Note that this is just Condition U with the first subsystem replaced by the second).
\item There exists a subset $S\subseteq [n]$ with $0 \leq |S| \leq d_{1}$ such that the following three conditions hold:
\begin{enumerate}
\item \leavevmode\vspace{-\dimexpr\baselineskip + \topsep + \parsep} \begin{center} $d_{1}^S=|S|.$ \end{center}
\item \leavevmode\vspace{-\dimexpr\baselineskip + \topsep + \parsep} \begin{center} $d_{2}^{[n]\setminus S}=n-|S|.$\end{center}
\item For any linear map $\Pi \in \Lin(\V_{1})$ with ${\ker(\Pi) = \spn \{x_{a ,1}: a \in S\}}$, scalars $\alpha_1,\dots, \alpha_n \in \field$, and index $b \in [n]\setminus S$ such that
\begin{align}
\sum_{a \in [n]\setminus S} \alpha_a & \Pi x_{a, 1} \otimes x_{a, 3} = \Pi x_{b, 1} \otimes z
\end{align}
for some $z \in \V_{\sigma(3)}$, it holds that $\omega(\alpha)\leq 1$.
\end{enumerate}

\item There exists a permutation $\tau \in S_n$ for which the matrix
\begin{align}
X_{1}^\tau = (x_{\tau(1),1},\dots, x_{\tau(n),n})
\end{align}
has reduced row echelon form
\begin{align}\label{Y}
Y=\left[\begin{array}{@{}c | c@{}}
  \begin{matrix}
  1 & & \\
   & \ddots &\\
   && 1
  \end{matrix}
  & \begin{matrix}
   & & \\
   & {\normalfont \Huge Z} &\\
   && 
  \end{matrix}
\end{array}\right],
\end{align}
where $Z \in \Lin(\field^{n-d_{1}},\field^{d_{1}})$ and the blank entries are zero. Furthermore, for each $a \in [d_{1}-1]$, the columns of the submatrix of $Y$ with row index $\{a,a+1,\dots, d_{1}\}$ and column index $\{a, a+1,\dots, n\}$ have k-rank at least two.
\item \leavevmode\vspace{-\dimexpr\baselineskip + \topsep + \parsep} \begin{center} $k_{1}=d_{1}.$ \end{center}
\item For all $\alpha \in \field^n$,
\begin{align}
\setft{rank}\Big[\sum_{a \in [n]} \alpha_a x_{a,2}\otimes x_{a,3}\Big]\geq \min\{\omega(\alpha),n-k_1+2\}.
\end{align}
(Note that this is a stronger statement than Condition U, as it replaces the quantity ${n-d_1+2}$ with the possibly larger quantity $n-k_1+2$.)
\end{enumerate}
Then $\sum_{a\in[n]} x_a$ constitutes a unique tensor rank decomposition.
\end{theorem}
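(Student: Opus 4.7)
The plan is to establish each of the seven alternative conditions as a separate case, in each case reducing to a previously proven uniqueness theorem of DLS. Since Theorem~\ref{uniqueness} is explicitly presented as a synthesis of results scattered across the five cited DLS papers, the bulk of the work is translating each condition into the notation and framework of the original papers, then invoking the corresponding theorem. The statement already does the hard conceptual work of identifying that all seven criteria share Condition~U as a common core hypothesis; the role of Condition~U is to guarantee uniqueness of decompositions of the sliced pencil $\sum_a \alpha_a x_{a,2}\otimes x_{a,3}$ whose weight vectors $\alpha$ have support of size at most $n - d_1 + 2$.

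First I would unify notation: fix bases $\V_j = \field^{d_j}$ and assemble the matrices $X_j = (x_{1,j},\dots, x_{n,j})$ for $j \in [3]$, as in the definition of Condition~C. All DLS criteria are naturally stated in terms of these matrices, their compound matrices $\C_s(X_j)$, and Khatri-Rao products, and Condition~U is already phrased this way. Next I would address the seven cases: Case~1 is a Kruskal-type inequality handled by the main theorem of~\cite{domanov2013uniqueness}; Case~2 follows by applying the Case~1 argument after permuting the role of $\V_1$ and $\V_2$, using the symmetric formulation of Condition~U given there; Case~3 is precisely the criterion of~\cite{sorensen2015new}, the projection $\Pi$ serving to quotient out the component corresponding to the distinguished subset $S$; Case~4 corresponds to the reduced row echelon form criterion of~\cite{sorensen2015coupled}, where the explicit triangular structure of $Y$ allows one to extract the factor matrix $X_1$ row by row; Case~5 ($k_1 = d_1$) is the easy special case in which $X_1$ has full column rank and the decomposition is recovered directly from a single slice; Cases~6 and~7 are strengthenings of Condition~U (one on $k_1$, the other replacing $n-d_1+2$ by the larger $n-k_1+2$) that reduce to earlier cases after observing that the strengthening propagates through the proof of the base DLS uniqueness theorem.

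The main obstacle is notational reconciliation across the five DLS papers rather than mathematical depth: each paper uses slightly different conventions for indexing Kruskal ranks, for the direction of the Khatri-Rao product, and for the specific formulation of the analogue of Condition~U (stated as a family $\mathrm{U}_k$ for variable $k$, of which our Condition~U is a single instance). A secondary care point is that Condition~U, as formulated here, carries the added requirement $k_1 \geq 2$, which in some of the DLS papers is either omitted or absorbed into implicit genericity assumptions; I would need to check that this does not weaken the hypotheses used in any invoked result. Once these translations are made carefully and the correspondence between each of Cases~1--7 and its source theorem is verified, the combined statement of Theorem~\ref{uniqueness} follows by assembling the seven implications. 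No new tensor-decomposition arguments are required beyond those already in~\cite{domanov2013uniqueness,domanov2013uniqueness2,domanov2014canonical,sorensen2015new,sorensen2015coupled}.
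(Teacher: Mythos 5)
Your proposal treats Theorem~\ref{uniqueness} as a pure synthesis, with every case citable to an existing DLS result. That is not what the paper does, and it is not correct. Cases~1,~2, and~3 are indeed citations (to~\cite{domanov2013uniqueness2,domanov2014canonical} and, for Case~3, to the combination of~\cite{domanov2013uniqueness,domanov2013uniqueness2,sorensen2015new,sorensen2015coupled}). But Case~4 is explicitly identified in the paper as a \emph{new} result, and your attribution of it to ``the reduced row echelon form criterion of~\cite{sorensen2015coupled}'' is wrong; no such citable result exists. The paper supplies an original proof of Case~4: it first reformulates Condition~4 coordinate-freely in Proposition~\ref{invariant_condition_4} in terms of a nested family of projections $\Pi_a$ with $\krank(\Pi_a x_{\tau(a),1},\dots, \Pi_a x_{\tau(n),1}) \geq 2$, and then argues by induction on $d_1$: Condition~U yields uniqueness in the first subsystem (via Proposition~4.3 of~\cite{domanov2013uniqueness}, adapted to arbitrary fields), after which one projects out $x_{1,1}$, verifies that Condition~U and Condition~4 persist for the projected family, and applies the induction hypothesis. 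Without this argument your proof has a genuine hole — the most substantive case of the theorem.

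Two secondary issues. First, the paper derives Cases~5 and~6 as consequences of Case~4 (``Theorem~\ref{uniqueness}.4 contains Theorem~\ref{uniqueness}.5, which in turn contains Theorem~\ref{uniqueness}.6''), whereas you treat them as independent. Your sketch for Case~5 — that $k_1 = d_1$ makes $X_1$ full column rank and lets one recover the decomposition from a single slice — is not right: $k_1 = d_1$ does not imply $d_1 = n$, so $X_1$ need not have full column rank, and the single-slice argument does not go through. Second, you repeatedly refer to ``seven'' conditions and ``Cases~1--7,'' but the theorem has exactly six; this miscount suggests one of your claimed reductions is aimed at a condition that does not exist.
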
}

For each $i \in [5]$, we will refer to Theorem~\hyperref[uniqueness]{\ref{uniqueness}.i} as the statement that Condition~U and the $i$-th condition appearing in Theorem~\ref{uniqueness} imply uniqueness. Theorems~\hyperref[uniqueness]{\ref{uniqueness}.1} and~\hyperref[uniqueness]{\ref{uniqueness}.2} are Corollary~1.23 and Proposition~1.26 in \cite{domanov2013uniqueness2,domanov2014canonical}. The Condition~C version of Theorem~\hyperref[uniqueness]{\ref{uniqueness}.3} is stated in Theorem~2.2 in \cite{sorensen2015coupled}, although the proof is contained in \cite{domanov2013uniqueness,domanov2013uniqueness2,sorensen2015new}. Condition 3b in Theorem~\ref{uniqueness} can be formulated as checking the rank of a certain matrix (see \cite{sorensen2015coupled}). Theorem~\hyperref[uniqueness]{\ref{uniqueness}.4} is a new result that we will prove (see Proposition~\ref{invariant_condition_4} for a coordinate-free statement). The Condition~C version of Theorems~\hyperref[uniqueness]{\ref{uniqueness}.5} and~\hyperref[uniqueness]{\ref{uniqueness}.6} are Theorems~1.6 and~1.7 in~\cite{domanov2014canonical}. It is easy to see that our Theorem~\hyperref[uniqueness]{\ref{uniqueness}.4} contains Theorem~\hyperref[uniqueness]{\ref{uniqueness}.5}, which in turn contains Theorem~\hyperref[uniqueness]{\ref{uniqueness}.6}, by the arguments used in~\cite{domanov2014canonical}.

Most of these statements have previously only been formulated for $\field=\real$ or $\field=\complex$, however in all of these cases the proof can be adapted to hold over an arbitrary field. The first step in proving all of these statements is to show that Condition~U implies uniqueness in the first subsystem. This is Proposition~4.3 in~\cite{domanov2013uniqueness}, and it is proven using Kruskal's permutation lemma \cite{kruskal1977three} (the proof of the permutation lemma in~\cite{landsberg2012tensors} holds word-for-word over an arbitrary field). In fact, uniqueness in the first subsystem holds even with the assumption $k_1 \geq 2$ removed from Condition~U~\cite{domanov2013uniqueness}.

A less-restrictive condition than Condition~U, which we would call Condition~W, also appears in \cite{domanov2013uniqueness,domanov2013uniqueness2}, and is the same as Condition~U except that it only requires~\eqref{Umeq} to hold when ${\alpha = (f(x_{1,1}),\dots, f(x_{n,1}))}$ for some linear functional $f \in \V_1^*$. We note that Theorem~\ref{uniqueness} also holds with Condition~U replaced by Condition~W. Although the Condition~W version of Theorem~\ref{uniqueness} is slightly stronger than the Condition~U version, we are not aware of an efficient algorithm to check either Condition~U or Condition~W, and the existence of such an algorithm seems unlikely.

We conclude this subsection by proving Theorem~\hyperref[uniqueness]{\ref{uniqueness}.4}. For this we require the following proposition, which restates Condition~4 in a coordinate-free manner.

\begin{prop}\label{invariant_condition_4}
Condition 4 in Theorem~\ref{uniqueness} holds if and only if there exists a permutation $\tau \in S_n$ such that for each $a \in [d_1-1]$ there is a linear operator $\Pi_a \in \Lin(\V_1)$ for which
\begin{align}
\Pi_a (x_{\tau(b),1})=0
\end{align}
for all $b \in [a-1]$, and
\begin{align}\label{k-rank-thingy}
\krank( \Pi_a x_{\tau(a),1},\dots,\Pi_a x_{\tau(n),1})\geq 2.
\end{align}
\end{prop}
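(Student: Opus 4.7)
The plan is to translate between the row-reduction description in Condition~4 and the intrinsic operator description of the proposition. The first observation is that both statements force $\{x_{\tau(1),1}, \ldots, x_{\tau(d_1),1}\}$ to be a basis of $\spn\{x_{a,1} : a \in [n]\}$: in Condition~4 this is visible from the identity block of $Y$; in the operator formulation it follows inductively, because the assumption $\krank(\Pi_a x_{\tau(a),1}, \ldots, \Pi_a x_{\tau(n),1}) \geq 2$ forces $\Pi_a x_{\tau(a),1} \neq 0$, and combined with the vanishing of $\Pi_a$ on $K_a := \spn\{x_{\tau(b),1} : b \in [a-1]\}$ this shows $x_{\tau(a),1} \notin K_a$.

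For the direction Condition~4~$\Rightarrow$~operator condition, I would fix a permutation $\tau$ witnessing Condition~4, let $E \in \Lin(\V_1)$ be the unique invertible operator sending $x_{\tau(b),1}$ to $e_b$ for $b \in [d_1]$ (so $Y = E X_1^\tau$), let $P_a \in \Lin(\V_1)$ be the coordinate projection onto $\spn\{e_a, \ldots, e_{d_1}\}$ along $\spn\{e_1, \ldots, e_{a-1}\}$, and set $\Pi_a := P_a E$. The vanishing $\Pi_a x_{\tau(b),1} = 0$ for $b \in [a-1]$ is immediate, and for $b \geq a$ the vector $\Pi_a x_{\tau(b),1}$ is the column of $Y$ at index $b$ restricted to rows $\{a, a+1, \ldots, d_1\}$ (embedded back in $\V_1$). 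Thus the k-rank of the columns of the submatrix in Condition~4 coincides with $\krank(\Pi_a x_{\tau(a),1}, \ldots, \Pi_a x_{\tau(n),1})$, and the assumption in Condition~4 supplies the required operators.

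For the converse, suppose a permutation $\tau$ and operators $\Pi_a$ are given. Define $E$ and $Y$ as above using the basis property; the block shape in Condition~4 is already present, with $Z$ unconstrained. To extract the k-rank condition on the submatrix, I would factor $\Pi_a = \widetilde{\Pi}_a \circ \pi_a$ through the quotient $\pi_a : \V_1 \to \V_1/K_a$, which is permitted since $K_a \subseteq \ker \Pi_a$. A linear map can only create linear dependencies and never destroy them, so $\krank(\pi_a(x_{\tau(b),1}) : b \geq a) \geq \krank(\Pi_a x_{\tau(b),1} : b \geq a) \geq 2$. The map $P_a E$ descends to an isomorphism $\V_1/K_a \to \spn\{e_a, \ldots, e_{d_1}\}$ under which $\pi_a(x_{\tau(b),1})$ for $b \geq a$ corresponds to the $b$-th column of the submatrix from Condition~4, and the desired k-rank bound on that submatrix follows. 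The proof is essentially bookkeeping between coordinates and an invariant operator picture; the only point worth care is the $a = 1$ case, where $K_1 = \{0\}$ and the vanishing condition is vacuous, matching the fact that the submatrix in Condition~4 for $a = 1$ is the entire matrix $Y$.
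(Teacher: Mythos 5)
Your approach matches the paper's at heart: the forward direction constructs $\Pi_a = P_a E$ exactly as the paper does (their $D_a P$ is the same operator), and both directions rest on the observation that k-rank cannot increase under application of a linear map. In the converse the paper argues by contrapositive via the slick identity $\Pi_a = \Pi_a P^{-1} D_a P$, whereas you factor $\Pi_a$ through the quotient $\V_1/K_a$ and compare the quotient classes to the submatrix columns; these are equivalent packagings of the same monotonicity argument, with your quotient picture being slightly more invariant in flavor.

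There is one small gap worth closing. Your induction for the basis property only runs over $a \in [d_1-1]$ and only uses $\Pi_a x_{\tau(a),1} \neq 0$, which gives linear independence of $x_{\tau(1),1},\dots,x_{\tau(d_1-1),1}$ but says nothing yet about $x_{\tau(d_1),1}$. To finish, you need the full strength of $\krank(\Pi_{d_1-1} x_{\tau(b),1}: b \geq d_1-1) \geq 2$: this forces $\Pi_{d_1-1} x_{\tau(d_1),1}$ to be linearly independent of $\Pi_{d_1-1} x_{\tau(d_1-1),1}$, hence $x_{\tau(d_1),1} \notin K_{d_1}$. Without this extra step the operator $E$ you build in the converse need not exist. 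A second, minor point: for $P_a E$ to descend to an \emph{isomorphism} $\V_1/K_a \to \spn\{e_a,\dots,e_{d_1}\}$ you should first reduce to $\V_1 = \field^{d_1}$ (as the paper does at the outset), or else restrict all maps to $\spn\{x_{a,1} : a \in [n]\}$; otherwise the quotient is too large and the descended map is merely surjective.
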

\begin{proof}
Assume without loss of generality that $\V_1=\field^{d_1}$. To see that the first statement implies the second, for each $a\in [d_1-1]$ let $\Pi_a=D_a P$, where $P\in \Lin(\field^{d_1})$ is the invertible matrix for which $PX_1^\tau=Y$, and $D_a \in \Lin(\field^{d_1})$ is the diagonal matrix with the first $a-1$ entries zero and the remaining entries $1$. It is easy to verify that~\eqref{k-rank-thingy} holds.


Conversely, suppose that the reduced row echelon form of $X_1^\tau$, given by $P X_1^\tau$ for some invertible matrix $P \in \Lin(\field^{d_1})$, does not have the specified form. Then there exists ${a\in [d_1-1]}$ for which the columns of $D_a P X_1^\tau$ have k-rank at most one. Any matrix $\Pi_a\in \Lin(\field^{d_1})$ for which $\Pi_a (x_{\tau(b),1})=0$ for all $b \in [a-1]$ satisfies
\begin{align}
\Pi_a =\Pi_a P^{-1} D_a P.
\end{align}
Since the k-rank is non-increasing under matrix multiplication from the left,~\eqref{k-rank-thingy} does not hold.
\end{proof}

With Proposition~\ref{invariant_condition_4} in hand, we can now prove Theorem~\ref{uniqueness}.4.

\begin{proof}
[Proof of Theorem~\ref{uniqueness}.4]
The question of whether or not the decomposition $\sum_{a\in [n]} x_a$ constitutes a unique tensor rank decomposition is invariant under permutations $\tau \in S_n$ of the tensors, so it suffices to prove the statement under the assumption that the permutation $\tau$ appearing in Condition 4 is trivial. We prove the statement by induction on $d_1$. If $d_1=2$, then Condition~U implies $k_2=k_3=n$, so uniqueness follows from Kruskal's theorem. For $d_1>2$, suppose $\sum_{a\in [n]} x_a =\sum_{a\in [r]} y_a$ for some non-negative integer $r \leq n$ and multiset of product tensors
\begin{align}
\{y_a : a \in [r]\} \subseteq \pro{\V_1: \V_2 : \V_3}.
\end{align}
By Proposition~4.3 in~\cite{domanov2013uniqueness} (or rather, the extension of this result to an arbitrary field), $r=n$, and there exists a permutation $\sigma \in S_n$ and nonegative integers $\alpha_1,\dots, \alpha_n \in \field^\times$ such that $\alpha_a x_{a,1}=y_{\sigma(a),1}$ for all $a \in [n]$. Let $\Pi_1 \in \Lin(\V_1)$ be any operator for which ${\ker(\Pi_1)=\spn\{x_{a,1}\}}$ and~\eqref{k-rank-thingy} holds (recall that $\tau$ is trivial). Then
\begin{align}
\sum_{a \in [n] \setminus \{1\}} (\Pi_1 x_{a,1})\otimes x_{a,2} \otimes x_{a,3}=\sum_{a \in [n] \setminus \{1\}} (\alpha_{a} \Pi_1  x_{a,1})\otimes y_{\sigma(a),2} \otimes y_{\sigma(a),3}.
\end{align}
Now, $\dim\spn\{\Pi_1 x_{a,1}: a \in [n]\setminus\{1\}\}=d_1-1$, and Condition~U again holds for the multiset of product tensors 
\begin{align}
\{(\Pi_1 x_{a,1}) \otimes x_{a,2} \otimes x_{a,3} : a \in [n] \setminus \{1\}\}.
\end{align}
Furthermore, these product tensors again satisfy Condition 4 of Theorem~\ref{uniqueness}, so by the induction hypothesis
\begin{align}
(\Pi_1 x_{a,1}) \otimes x_{a,2} \otimes x_{a,3}=(\alpha_a \Pi_1 x_{a,1}) \otimes y_{\sigma(a),2} \otimes y_{\sigma(a),3}\quad \text{for all}\quad a \in [n]\setminus \{1\}.
\end{align}
It follows that $x_a=y_{\sigma(a)}$ for all $a \in [n] \setminus\{1\}$, so $x_1=y_{\sigma(1)}$. This completes the proof.
\end{proof}

\subsection{Conjectural generalization of all uniqueness criteria of DLS}\label{new_conjecture}
In the case of three subsystems, our generalization of Kruskal's theorem states that Condition~S implies uniqueness. Since {Condition~H $\Rightarrow$ Condition~S}, then a corollary to Theorem~\ref{k-gen} is that Condition~H implies uniqueness. Similarly, Theorem~\ref{uniqueness} above states that Condition~U + extra assumptions implies uniqueness, which implies that Condition~H + the same extra assumptions implies uniqueness. Since we have proven that Condition~H alone implies uniqueness, it is natural to ask whether Condition~U alone implies uniqueness. We now state this question formally. A positive answer to Question~\ref{conjecture_U} would generalize and unify all of the uniqueness criteria of DLS (synthesized in Theorem~\ref{uniqueness}) into a single, elegant statement.

\begin{question}\label{conjecture_U}
Does Condition~U imply that $\sum_{a \in [n]} x_a$ constitutes a unique tensor rank decomposition?
\end{question}

%

\section{Appendix}
In this appendix we prove Theorem~\ref{k_arbitrary_old}. The proof is very similar to that of Theorem~\ref{k_arbitrary}.
\begin{proof}[Proof of Theorem~\ref{k_arbitrary_old}]
For each $a \in [r]$, let $x_{n+a}=-y_a$, and let $T_1 \sqcup \dots \sqcup T_t=[n+r]$ be the index sets of the decomposition of $\{x_a : a \in [n+r]\}$ into connected components. Note that for each $p \in [t]$, if
\begin{align}
\bigabs{T_p \cap [n+r]\setminus [n]}\leq \bigabs{T_p \cap [n]},
\end{align}
then $\bigabs{T_p \cap [n]} \leq s$, otherwise $\{x_a : a \in T_p\}$ would split. Assume without loss of generality that
\begin{align}
\bigabs{T_1 \cap [n]}-\bigabs{T_1 \cap [n+r]\setminus [n]} &\geq \bigabs{T_2 \cap [n]}-\bigabs{T_2 \cap [n+r]\setminus [n]}\\
 &\;\; \vdots \\
 & \geq \bigabs{T_{{t}} \cap [n]}-\bigabs{T_{{t}} \cap [n+r]\setminus [n]},
\end{align}
If
\begin{align}
\bigabs{T_{1}\cap [n]} \geq \bigabs{T_{1} \cap [n+r]\setminus [n]},
\end{align}
then let $\tilde{l}\in [t]$ be the largest integer for which
\begin{align}\label{inequality_old}
\bigabs{T_{\tilde{l}}\cap [n]} \geq \bigabs{T_{\tilde{l}} \cap [n+r]\setminus [n]}.
\end{align}
Otherwise, let $\tilde{l}=0$. Then for all $p \in [t]\setminus [\tilde{l}]$ it holds that
\begin{align}\label{strict_inequality_old}
\bigabs{T_p \cap [n]} < \bigabs{T_p \cap [n+r]\setminus [n]}.
\end{align}
To complete the proof, we will show that $\tilde{l} \geq l$, for then we can take $Q_p=T_p \cap [n]$ and $R_p=T_p \cap [n+r]\setminus [n]$ for all $p \in [l]$ to conclude.

Suppose toward contradiction that $\tilde{l}< l$. We will require the following two claims:

\begin{claim}\label{pigeonhole_claim_old}
It holds that $\tilde{l}<t$, $\bigceil{\frac{n- s \tilde{l} }{t-\tilde{l}}} \geq s+1$, and there exists $p \in [t]\setminus [\tilde{l}]$ for which
\begin{align}\label{pigeonhole}
\bigabs{T_p \cap [n]} \geq \biggceil{\frac{n- s \tilde{l} }{t-\tilde{l}}}.
\end{align} 
\end{claim}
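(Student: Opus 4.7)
The plan is to mimic the structure of the proof of Claim~\ref{pigeonhole_claim}, with one subtle modification coming from the fact that here the pair of decompositions is not assumed irreducible: instead of the equality $|T_p \cap [n]| = |T_p \cap [n+r]\setminus [n]|$ that held for $p \in [\tilde{l}]$ in the earlier claim, we only have the inequality $|T_p \cap [n]| \geq |T_p \cap [n+r]\setminus [n]|$. Writing $a_p := |T_p \cap [n]|$ and $b_p := |T_p \cap [n+r]\setminus [n]|$, the key constraints are $a_p \leq s$ and $b_p \leq a_p$ for $p \in [\tilde{l}]$, together with $b_p \geq a_p + 1$ for $p \in [t]\setminus[\tilde{l}]$.

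The central step is to prove $n > st$; the three assertions of the claim then follow formally, exactly as in the proof of Claim~\ref{pigeonhole_claim}. Supposing $n \leq st$ for contradiction, the only lower bound we can place on $\sum_{p \in [\tilde{l}]} b_p$ is $0$ (rather than $\sum_{p \in [\tilde{l}]} a_p$ as in the irreducible case), so
\begin{align}
r = \sum_{p=1}^{t} b_p \geq \sum_{p > \tilde{l}} (a_p+1) = \Bigl(n - \sum_{p \in [\tilde{l}]} a_p\Bigr) + (t-\tilde{l}) \geq (n - s\tilde{l}) + (t-\tilde{l}).
\end{align}
Since $n \leq st$ forces $t - \tilde{l} \geq (n - s\tilde{l})/s$, this simplifies to $r \geq \tfrac{s+1}{s}(n - s\tilde{l})$. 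The bound $\tilde{l} \leq l - 1 \leq q/s - 1$ forces $s\tilde{l} \leq q - s$, so $r \geq \tfrac{s+1}{s}(n-q+s)$; since $r$ is an integer, $r \geq \bigceil{\tfrac{s+1}{s}(n-q+s)}$, contradicting~\eqref{eq:k_arbitrary_old}.

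The three conclusions of the claim then follow as in Claim~\ref{pigeonhole_claim}: $\tilde{l} < t$ because $\tilde{l} = t$ would force $n = \sum_p a_p \leq st$; the inequality $\bigceil{(n-s\tilde{l})/(t-\tilde{l})} \geq s+1$ is equivalent to $(n-s\tilde{l})/(t-\tilde{l}) > s$, which is exactly $n > st$; and the pigeonhole principle applied to $\sum_{p > \tilde{l}} a_p \geq n - s\tilde{l}$ distributed over $t - \tilde{l}$ indices yields some $p \in [t]\setminus[\tilde{l}]$ with $a_p \geq \bigceil{(n-s\tilde{l})/(t-\tilde{l})}$.

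The main obstacle is the loss of the identity $\sum_{p \in [\tilde{l}]} b_p = \sum_{p \in [\tilde{l}]} a_p$ that was available in Claim~\ref{pigeonhole_claim}: this weakens the lower bound on $r$ by up to $s\tilde{l}$, so one must verify that the slack is precisely compensated by the stronger upper bound $r \leq \bigceil{\tfrac{s+1}{s}(n-q+s)} - 1$ imposed in~\eqref{eq:k_arbitrary_old} (as opposed to $r \leq n + \bigceil{(n-q)/s}$ in Theorem~\ref{k_arbitrary}). Getting this accounting right is the delicate point, but the computation sketched above shows that it works out with exactly no room to spare.
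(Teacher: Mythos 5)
Your proof is correct and follows essentially the same route as the paper's: both set up the contradiction from $n \le st$, both drop the terms indexed by $[\tilde{l}]$ from $r = \sum_p |T_p \cap [n+r]\setminus[n]|$, apply $|T_p\cap [n]| \le s$ on $[\tilde{l}]$ and $b_p \ge a_p+1$ off $[\tilde{l}]$, and both use $\tilde{l} \le q/s - 1$ to reach $r \ge \bigl\lceil \tfrac{s+1}{s}(n-q+s)\bigr\rceil$ contradicting~\eqref{eq:k_arbitrary_old}. The only cosmetic difference is that you introduce $a_p, b_p$ notation and take the ceiling once at the very end rather than inside the chain of inequalities, which if anything makes the integrality step slightly cleaner; the derivation of the three conclusions from $n > st$ is identical to the paper's.
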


\begin{claim}\label{inequality2_claim_old}
For all $p \in [t] \setminus [\tilde{l}]$, it holds that
\begin{align}\label{inequality2_old}
\bigabs{T_p \cap [n+r]\setminus [n]} \leq \bigabs{T_p \cap [n]} +(r-n)+(s+1)\tilde{l} -t+1.
\end{align}
\end{claim}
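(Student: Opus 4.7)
The plan is to prove Claim~\ref{inequality2_claim_old} by contradiction, along the same lines as the proof of Claim~\ref{inequality2_claim} in Theorem~\ref{k_arbitrary}, but carefully tracking the extra slack arising from the fact that~\eqref{inequality_old} is now a non-strict inequality rather than an equality. First I would suppose toward contradiction that some $\tilde{p} \in [t]\setminus[\tilde{l}]$ violates the bound, so that
\begin{align}
\abs{T_{\tilde{p}} \cap [n+r]\setminus [n]} \geq \abs{T_{\tilde{p}} \cap [n]} + (r-n) + (s+1)\tilde{l} - t + 2.
\end{align}
I would then lower-bound $r = \sum_{p=1}^t \abs{T_p \cap [n+r]\setminus [n]}$ by splitting the sum into three groups of indices and deriving $r \geq r+1$.

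The three groups, and the lower bounds I would use on each, are as follows. For $p \in [t]\setminus[\tilde{l}]$ with $p \neq \tilde{p}$, the strict inequality~\eqref{strict_inequality_old} gives $\abs{T_p \cap [n+r]\setminus[n]} \geq \abs{T_p \cap [n]} + 1$. For $p \in [\tilde{l}]$, I would appeal to the observation made at the very start of the proof of Theorem~\ref{k_arbitrary_old}: since $\abs{T_p \cap [n+r]\setminus[n]} \leq \abs{T_p \cap [n]}$, the hypothesis of the theorem combined with Corollary~\ref{original_conjecture} forces $\abs{T_p \cap [n]} \leq s$, so the trivial bound $\abs{T_p \cap [n+r]\setminus[n]} \geq 0$ implies
\begin{align}
\abs{T_p \cap [n+r]\setminus [n]} \geq \abs{T_p \cap [n]} - s.
\end{align}
For $p = \tilde{p}$, I would simply use the contradictory hypothesis. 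Summing the three bounds, using $\sum_{p=1}^t \abs{T_p \cap [n]} = n$, and simplifying collapses the $\abs{T_{\tilde{p}} \cap [n]}$ terms and leaves
\begin{align}
r \geq n - s\tilde{l} + (t - \tilde{l} - 1) + (r-n) + (s+1)\tilde{l} - t + 2 = r + 1,
\end{align}
the desired contradiction.

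The only subtlety, and the reason the coefficient $(s+1)$ appears in place of the plain $1$ from Claim~\ref{inequality2_claim}, is precisely the $-s\tilde{l}$ deficit above: unlike in Theorem~\ref{k_arbitrary}, components indexed by $[\tilde{l}]$ can shed up to $s$ elements in passing from $T_p \cap [n]$ to $T_p \cap [n+r]\setminus[n]$, so the running sum must absorb an extra $s\tilde{l}$. This is the main (modest) obstacle, and it is resolved cleanly by the pointwise bound above once one notices that connected components with $\abs{T_p \cap [n+r]\setminus[n]} \leq \abs{T_p \cap [n]}$ are necessarily small. Once Claim~\ref{inequality2_claim_old} is in hand, combining it with Claim~\ref{pigeonhole_claim_old} in the same manner as in the proof of Theorem~\ref{k_arbitrary} produces an index $p \in [t]\setminus[\tilde{l}]$ for which $\abs{T_p} \leq \sum_{j=1}^m (d_j^{T_p \cap [n]}-1)+1$, forcing $\{x_a : a \in T_p\}$ to split by Corollary~\ref{original_conjecture} and thereby completing the proof of Theorem~\ref{k_arbitrary_old}.
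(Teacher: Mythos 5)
Your proof is correct and follows essentially the same route as the paper's: assume the bound fails at some $\tilde{p}$, sum the quantities $\bigabs{T_p \cap [n+r]\setminus [n]}$ using the strict inequality~\eqref{strict_inequality_old} for $p \in [t]\setminus[\tilde{l}]$ with $p \neq \tilde{p}$, the negated hypothesis at $\tilde{p}$, and the constraint $\bigabs{T_p \cap [n]} \leq s$ for $p \in [\tilde{l}]$, then collapse via $\sum_p \bigabs{T_p \cap [n]} = n$ to reach $r \geq r+1$. The only cosmetic difference is that you sum over all $p \in [t]$ with a per-term bound $\bigabs{T_p \cap [n+r]\setminus[n]} \geq \bigabs{T_p \cap [n]} - s$ for $p \in [\tilde{l}]$, whereas the paper drops those nonnegative terms outright ($r \geq \sum_{p>\tilde{l}}$) and compensates at the end with $\sum_{p>\tilde{l}} \bigabs{T_p \cap [n]} \geq n - s\tilde{l}$; the two accountings cancel identically.
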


Before proving these claims, we first use them to complete the proof of the theorem. Let $p \in [t]\setminus [\tilde{l}]$ be as in Claim~\ref{pigeonhole_claim_old}. Then,
\begin{align}
\abs{T_p} &= \bigabs{T_p \cap [n]}+\bigabs{T_p \cap [n+r]\setminus [n]}\\
		&\leq 2 \bigabs{T_p \cap [n]}+ r-n+(s+1)\tilde{l} -t+1\\
		&\leq 2 \bigabs{T_p \cap [n]}+ r-n + s \tilde{l} - \biggceil{\frac{n-s \tilde{l}}{\bigabs{T_p \cap [n]}}}+1\\
		&\leq 2 \bigabs{T_p \cap [n]}+ (r-n+q-s)- \biggceil{\frac{n-q+s}{\bigabs{T_p \cap [n]}}}+1\\
		& \leq \sum_{j=1}^m (d_j^{T_p \cap [n]}-1)+1,
\end{align}
where the first line is obvious, the second follows from Claim~\ref{inequality2_claim_old}, the third follows from Claim~\ref{pigeonhole_claim_old}, the fourth follows from $\tilde{l}< l$, and the fifth follows from the assumptions of the theorem and the fact that $\abs{T_p \cap [n] } \geq s+1$. So $\{x_a : a \in T_p\}$ splits, a contradiction. This completes the proof, modulo proving the claims.

\begin{proof}[Proof of Claim~\ref{pigeonhole_claim}]
\renewcommand\qedsymbol{$\triangle$}
To prove the claim, we first observe that $n>st$. Indeed, if $n \leq st$, then
\begin{align}
r& \geq \sum_{p=\tilde{l}+1}^t \bigabs{T_p \cap [n+r]\setminus [n]}\\
&\geq\sum_{p=\tilde{l}+1}^t \left(\bigabs{T_p \cap [n]}+1\right)\\
&=n-\bigabs{(T_1 \sqcup \dots \sqcup T_{\tilde{l}})\cap [n]}+t-\tilde{l}\\
&\geq n+t - (s+1)\tilde{l}\\
& \geq n+\biggceil{\frac{n}{s}-(s+1)(q/s-1)}\\
&=  \biggceil{\left(\frac{s+1}{s}\right)(n-q+s)},\\
\end{align}
where the first line is obvious, the second follows from~\eqref{strict_inequality}, the third is obvious, the fourth follows from $\abs{T_p \cap [n]} \leq s$ for all $p \in [\tilde{l}]$, the fifth follows from $n\leq st$ and $\tilde{l}<l$, and the sixth is algebra. This contradicts the assumptions of the theorem, so it must hold that $n>st$.

Note that $\tilde{l}<t$, for otherwise we would have $n \leq st$ by the fact that $\bigabs{T_p \cap [n]} \leq s$ for all $p \in [\tilde{l}]$. To verify that $\bigceil{\frac{n- s \tilde{l} }{t-\tilde{l}}} \geq s+1$, it suffices to prove $\frac{n- s \tilde{l} }{t-\tilde{l}} > s,$ which follows from $n>st$. To verify~\eqref{pigeonhole}, since $\abs{T_p \cap [n]} \leq s$ for all $p \in [\tilde{l}]$, by the pigeonhole principle there exists $p \in [t]\setminus [\tilde{l}]$ for which
\begin{align}
\bigabs{T_p \cap [n]} \geq \biggceil{\frac{n- s \tilde{l} }{t-\tilde{l}}}.
\end{align}
This proves the claim.
\end{proof}

\begin{proof}[Proof of Claim~\ref{inequality2_claim_old}]
\renewcommand\qedsymbol{$\triangle$}
Suppose toward contradiction that the inequality~\eqref{inequality2_old} does not hold for some $\tilde{p}\in [t] \setminus [\tilde{l}]$. Then
\begin{align}
r &\geq \sum_{p=\tilde{l}+1}^t \bigabs{T_p \cap [n+r]\setminus [n]}\\
& \geq \sum_{p \neq \tilde{p}} \big(\bigabs{T_p \cap [n]}+1\big)+ \bigabs{T_{\tilde{p}} \cap [n]}+ (r-n)+(s+1)\tilde{l} -t+2\\
&= \sum_{p=\tilde{l}+1}^t \bigabs{T_p \cap [n]}+ (r-n)+s\tilde{l}+1\\
&\geq r+1,
\end{align}
where the first three lines are obvious, and the fourth follows from~\eqref{inequality_old}, a contradiction.
\end{proof}
The proofs of Claims~\ref{pigeonhole_claim_old} and~\ref{inequality2_claim_old} complete the proof of the theorem.
\end{proof}

\bibliographystyle{alpha}
\bibliography{tensor_rank}

\newcommand{\etalchar}[1]{$^{#1}$}
\begin{thebibliography}{CMDL{\etalchar{+}}15}

\bibitem[Bal20]{ballico2020linearly}
Edoardo Ballico.
\newblock Linearly dependent and concise subsets of a {S}egre variety depending
  on k factors.
\newblock {\em arXiv preprint}, math.AG/2002.09720, 2020.

\bibitem[BBCG18]{Ballico:2018aa}
Edoardo Ballico, Alessandra Bernardi, Luca Chiantini, and Elena Guardo.
\newblock Bounds on the tensor rank.
\newblock {\em Annali di Matematica Pura ed Applicata (1923 -)},
  197(6):1771--1785, 2018.

\bibitem[BCMV14]{pmlr-v35-bhaskara14b}
Aditya Bhaskara, Moses Charikar, Ankur Moitra, and Aravindan Vijayaraghavan.
\newblock Open problem: Tensor decompositions: Algorithms up to the uniqueness
  threshold?
\newblock In {\em Conference on Learning Theory}, pages 1280--1282. Proceedings
  of Machine Learning Research, 2014.

\bibitem[BCV14]{bhaskara2014uniqueness}
Aditya Bhaskara, Moses Charikar, and Aravindan Vijayaraghavan.
\newblock Uniqueness of tensor decompositions with applications to polynomial
  identifiability.
\newblock In {\em Conference on Learning Theory}, pages 742--778. Proceedings
  of Machine Learning Research, 2014.

\bibitem[BLM17]{PhysRevA.95.032308}
Michel Boyer, Rotem Liss, and Tal Mor.
\newblock Geometry of entanglement in the {B}loch sphere.
\newblock {\em Physical Review A}, 95:032308, 2017.

\bibitem[CH96]{Coullard:1996aa}
Collette~R. Coullard and Lisa Hellerstein.
\newblock Independence and port oracles for matroids, with an application to
  computational learning theory.
\newblock {\em Combinatorica}, 16(2):189--208, 1996.

\bibitem[Chi19]{Chiantini2019}
Luca Chiantini.
\newblock {\em Hilbert Functions and Tensor Analysis}, pages 125--151.
\newblock Springer International Publishing, Cham, 2019.

\bibitem[CMDL{\etalchar{+}}15]{cichocki2015tensor}
Andrzej Cichocki, Danilo Mandic, Lieven De~Lathauwer, Guoxu Zhou, Qibin Zhao,
  Cesar Caiafa, and Huy~Anh Phan.
\newblock Tensor decompositions for signal processing applications: From
  two-way to multiway component analysis.
\newblock {\em IEEE signal processing magazine}, 32(2):145--163, 2015.

\bibitem[COV17a]{effective}
Luca Chiantini, Giorgio Ottaviani, and Nick Vannieuwenhoven.
\newblock Effective criteria for specific identifiability of tensors and forms.
\newblock {\em SIAM Journal on Matrix Analysis and Applications},
  38(2):656--681, 2017.

\bibitem[COV17b]{chiantini2017generic}
Luca Chiantini, Giorgio Ottaviani, and Nick Vannieuwenhoven.
\newblock On generic identifiability of symmetric tensors of subgeneric rank.
\newblock {\em Transactions of the American Mathematical Society},
  369(6):4021--4042, 2017.

\bibitem[Der13]{DERKSEN2013708}
Harm Derksen.
\newblock Kruskal's uniqueness inequality is sharp.
\newblock {\em Linear Algebra and its Applications}, 438(2):708 -- 712, 2013.

\bibitem[DL13a]{domanov2013uniqueness}
Ignat Domanov and Lieven~De Lathauwer.
\newblock On the uniqueness of the canonical polyadic decomposition of
  third-order tensors---{P}art {I}: Basic results and uniqueness of one factor
  matrix.
\newblock {\em SIAM Journal on Matrix Analysis and Applications},
  34(3):855--875, 2013.

\bibitem[DL13b]{domanov2013uniqueness2}
Ignat Domanov and Lieven~De Lathauwer.
\newblock On the uniqueness of the canonical polyadic decomposition of
  third-order tensors---{P}art {II}: Uniqueness of the overall decomposition.
\newblock {\em SIAM Journal on Matrix Analysis and Applications},
  34(3):876--903, 2013.

\bibitem[DL14]{domanov2014canonical}
Ignat Domanov and Lieven~De Lathauwer.
\newblock Canonical polyadic decomposition of third-order tensors: Reduction to
  generalized eigenvalue decomposition.
\newblock {\em SIAM Journal on Matrix Analysis and Applications},
  35(2):636--660, 2014.

\bibitem[HJ13]{horn2013matrix}
Roger Horn and Charles Johnson.
\newblock {\em Matrix Analysis}.
\newblock Cambridge University Press, 2013.

\bibitem[HK15]{1751-8121-48-4-045303}
Kil-Chan Ha and Seung-Hyeok Kye.
\newblock Multi-partite separable states with unique decompositions and
  construction of three qubit entanglement with positive partial transpose.
\newblock {\em Journal of Physics A: Mathematical and Theoretical},
  48(4):045303, 2015.

\bibitem[IK99]{iarrobino1999power}
Anthony Iarrobino and Vassil Kanev.
\newblock {\em Power sums, Gorenstein algebras, and determinantal loci}.
\newblock Springer Science \& Business Media, 1999.

\bibitem[Joh11]{johnston2011characterizing}
Nathaniel Johnston.
\newblock Characterizing operations preserving separability measures via linear
  preserver problems.
\newblock {\em Linear and Multilinear Algebra}, 59(10):1171--1187, 2011.

\bibitem[Kri93]{krijnen1993analysis}
Wilhelmus~Petrus Krijnen.
\newblock {\em The analysis of three-way arrays by constrained PARAFAC
  methods}.
\newblock DSWO Press, Leiden University, 1993.

\bibitem[Kru77]{kruskal1977three}
Joseph Kruskal.
\newblock Three-way arrays: rank and uniqueness of trilinear decompositions,
  with application to arithmetic complexity and statistics.
\newblock {\em Linear Algebra and its Applications}, 18(2):95--138, 1977.

\bibitem[Lan12]{landsberg2012tensors}
Joseph Landsberg.
\newblock {\em Tensors: {G}eometry and {A}pplications}.
\newblock Graduate studies in mathematics. American Mathematical Society, 2012.

\bibitem[Lat11]{art2}
Lieven~De Lathauwer.
\newblock A short introduction to tensor-based methods for factor analysis and
  blind source separation.
\newblock {\em ISPA 2011 - 7th International Symposium on Image and Signal
  Processing and Analysis}, 2011.

\bibitem[Lov18]{tensor}
Benjamin Lovitz.
\newblock Toward an analog of {K}ruskal's theorem on tensor decomposition.
\newblock {\em arXiv preprint}, math.CO/1812.00264v1, 2018.

\bibitem[Lov20]{tensor2}
Benjamin Lovitz.
\newblock Toward a generalization of {K}ruskal's theorem on tensor
  decomposition.
\newblock {\em arXiv preprint}, math.CO/1812.00264v2, 2020.

\bibitem[Lov21]{lovitz2021decomposable}
Benjamin Lovitz.
\newblock On decomposable correlation matrices.
\newblock {\em Linear and Multilinear Algebra}, 69(11):2115--2129, 2021.

\bibitem[LS01]{liu2001cramer}
Xiangqian Liu and Nikolaos~D Sidiropoulos.
\newblock Cram{\'e}r-{R}ao lower bounds for low-rank decomposition of
  multidimensional arrays.
\newblock {\em IEEE Transactions on Signal Processing}, 49(9):2074--2086, 2001.

\bibitem[Oxl06]{oxley2006matroid}
James~G Oxley.
\newblock {\em Matroid theory}.
\newblock Oxford University Press, second edition, 2006.

\bibitem[Rho10]{RHODES20101818}
John Rhodes.
\newblock A concise proof of {K}ruskal's theorem on tensor decomposition.
\newblock {\em Linear Algebra and its Applications}, 432(7):1818 -- 1824, 2010.

\bibitem[SB00]{sidiropoulos2000uniqueness}
Nikolaos Sidiropoulos and Rasmus Bro.
\newblock On the uniqueness of multilinear decomposition of n-way arrays.
\newblock {\em Journal of Chemometrics: A Journal of the Chemometrics Society},
  14(3):229--239, 2000.

\bibitem[SDL15]{sorensen2015coupled}
Mikael S{\o}rensen and Lieven~De De~Lathauwer.
\newblock Coupled canonical polyadic decompositions and (coupled)
  decompositions in multilinear rank-({L\_r},n,{L\_r},n,1) terms---{P}art {I}:
  Uniqueness.
\newblock {\em SIAM Journal on Matrix Analysis and Applications},
  36(2):496--522, 2015.

\bibitem[SDLF{\etalchar{+}}17]{sidiropoulos2017tensor}
Nikolaos Sidiropoulos, Lieven De~Lathauwer, Xiao Fu, Kejun Huang, Evangelos~E
  Papalexakis, and Christos Faloutsos.
\newblock Tensor decomposition for signal processing and machine learning.
\newblock {\em IEEE Transactions on Signal Processing}, 65(13):3551--3582,
  2017.

\bibitem[SL15]{sorensen2015new}
Mikael S{\o}rensen and Lieven~De Lathauwer.
\newblock New uniqueness conditions for the canonical polyadic decomposition of
  third-order tensors.
\newblock {\em SIAM Journal on Matrix Analysis and Applications},
  36(4):1381--1403, 2015.

\bibitem[Str83]{STRASSEN1983645}
Volker Strassen.
\newblock Rank and optimal computation of generic tensors.
\newblock {\em Linear Algebra and its Applications}, 52-53:645 -- 685, 1983.

\bibitem[Wes67]{westwick1967}
Roy Westwick.
\newblock Transformations on tensor spaces.
\newblock {\em Pacific Journal of Mathematics}, 23(3):613--620, 1967.

\end{thebibliography}

\end{document}